\newcounter{margincounter}
\renewcommand\section{\@startsection{section}{1}{\z@}%
                                   {-3.5ex \@plus -1ex \@minus -.2ex}%
                                   {2.3ex \@plus.2ex}%
                                   {\normalfont\large\bfseries}} 
\renewcommand\subsection{\@startsection{subsection}{2}{\z@}%
                                     {-3.25ex\@plus -1ex \@minus -.2ex}%
                                     {1.5ex \@plus .2ex}%
                                     {\normalfont\bfseries}} 
\renewcommand\subsubsection{\@startsection{subsubsection}{2}{\z@}%
                                     {-3.25ex\@plus -1ex \@minus -.2ex}%
                                     {1.5ex \@plus .2ex}%
                                   {\normalfont\itshape}} 
\def\la{\langle}\def\ra{\rangle}
\newcommand{\eop}{\bigstar}  
\newcommand{\AAA}{{\mathcal A}}
\newcommand{\BB}{{\mathcal B}}
\newcommand{\CC}{{\mathcal C}}
\newcommand{\KK}{{\mathcal K}}
\newcommand{\LL}{{\mathcal L}}
\newcommand{\MM}{{\mathcal M}}
\newcommand{\NN}{{\mathcal N}}
\newcommand{\fAA}{{\mathfrak A}}
\newcommand{\fBB}{{\mathfrak B}}
\newcommand{\fCC}{{\mathfrak C}}
\newcommand{\fEE}{{\mathfrak E}}
\newcommand{\fLL}{{\mathfrak L}}
\newcommand{\fMM}{{\mathfrak M}}
\newcommand{\fNN}{{\mathfrak N}}
\newcommand{\fOO}{{\mathfrak O}}
\newcommand{\triple}[3]{\langle\kern1pt#1 , \:#2 , 
      \:#3 \kern1pt\rangle }
\newcommand{\trpl}[3]{\langle\kern1pt#1 , \:#2 , 
      \:#3 \kern1pt\rangle }
\def\mathunderaccent#1#2 {\let\theaccent#1\skewfactor#2
\mathpalette\putaccentunder}
\def\putaccentunder#1#2{\oalign{$#1#2$\crcr\hidewidth
\vbox to.2ex{\hbox{$#1\skew\skewfactor\theaccent{}$}\vss}\hidewidth}}
\theoremstyle{plain}
\newtheorem{theorem}{Theorem}[section]
\newtheorem{proposition}[theorem]{Proposition}
\newtheorem{claim}[theorem]{Claim}
\newtheorem{lemma}[theorem]{Lemma}
\newtheorem*{ShLemma}{Lemma \ref{technical}}
\newtheorem{observation}[theorem]{Observation}
\newtheorem{corollary}[theorem]{Corollary}
\theoremstyle{definition}
\newtheorem{definition}[theorem]{Definition}
\newtheorem{question}[theorem]{Question}
\theoremstyle{remark}
\newtheorem{example}[theorem]{Example}
\newtheorem{fact}[theorem]{Fact.}
\newtheorem{notation}[theorem]{Notation}
\newtheorem{remark}[theorem]{Remark}
\newcommand{\sC}{\mathscr{C}}
\newcommand{\sI}{\mathscr{I}}
\newcommand{\Domp}{\mathrm{Dom}}
\newcommand{\intern}{\mathrm{int}}
\newcommand{\lraw}{\longrightarrow}
\newcommand{\lrawi}{\longrightarrow_\mathrm{int}}
\newcommand{\wh}[1]{\widehat{#1}}
\newcommand{\DD}{{\mathscr D}}
\newcommand{\K}{\mathcal{K}}
\newcommand{\raw}{\rightarrow}
\newcommand{\ov}{\overline}
\newcommand{\ran}{\textrm{ran}}
\newcommand{\los}{\L o\'{s}}
\newcommand{\age}{\mathrm{age}}
\newcommand{\uphp}{\upharpoonright}
\newcommand{\smf}{\smallfrown}
\begin{document}
\title{Big Ramsey Degrees in Ultraproducts of Finite Structures}

\author{Dana Barto\v{s}ov\'a}
\address{University of Florida, 1400 Stadium Road, Gainesville, FL 32601}
\email{dbartosova@ufl.edu}
\urladdr{https://people.clas.ufl.edu/dbartosova/}

\author{Mirna D\v zamonja}
\address{IRIF, CNRS et Universit{\'e} de Paris Cit{\'e}, B{\^a}timent Sophie Germain, Case courrier 7014,
8 Place Aurélie Nemours,
75205 Paris Cedex 13, France}
\email{mdzamonja@irif.fr}
\urladdr{https://www.logiqueconsult.eu}
 
\author{Rehana Patel}
\address{African Institute for Mathematical Sciences, M'bour-Thi\`{e}s, Senegal}
\email{rpatel@aims-senegal.org}
\urladdr{}

\author{Lynn Scow}
\address{California State University, San Bernardino
5500 University Parkway 
San Bernardino, CA 92407} 
\email{lscow@csusb.edu} 
\urladdr{https://www.csusb.edu/profile/lynn.scow}

\thanks{
This research was carried out as part of the American Institute of Mathematics (AIM) SQuaRE program. The authors thank AIM for their support. Dana Barto\v{s}ov\'a is supported by NSF grant No.~DMS-1953955 and NSF CAREER grant no.~DMS-2144118. Mirna D\v zamonja thanks the European Union's Horizon 2020 research and innovation program for funding under the Maria Sk{\l}odowska-Curie grant agreement No.~1010232.  She equally thanks l'Institut d'Histoire et de Philosophie des Sciences et des Techniques, Universit\' e Panth{\'e}on-Sorbonne, where she is an Associate Member, and the University of East Anglia, Norwich, UK, where she is a Visiting Professor. 
This material is partially based upon work supported by the National Science Foundation under Grant No.~DMS-1928930 while Rehana Patel and Lynn Scow participated in a program hosted by the Mathematical Sciences Research Institute in Berkeley, California, during the Summer 2022 semester.  Lynn Scow is supported by NSF grant No.~DMS-2246995 in completing part of this work.
}

\subjclass[2010]{03C20, 05D10, 03C50, 03C13, 05C55}

\keywords{ultraproduct, partition property, big Ramsey degree, $\eta_1$}

\begin{abstract}
We develop a 
transfer principle of structural Ramsey theory from 
finite structures to 
ultraproducts.
We show that under certain mild conditions, when a class of finite structures has finite small Ramsey degrees, under the (Generalized) Continuum Hypothesis the ultraproduct has finite big Ramsey degrees for internal colorings.
The necessity of restricting to internal colorings is demonstrated 
by the example of the ultraproduct 
of finite linear orders. 
Under CH, this ultraproduct $\fLL^*$ has, as a spine, $\eta_1$, 
an uncountable analogue of the order type of rationals $\eta$.
Finite big Ramsey degrees for $\eta$ were exactly calculated by Devlin in \cite{Devlin}. 
It is 
immediate from \cite{Tod87} that $\eta_1$ fails to have finite big Ramsey degrees.
Moreover,
we extend Devlin's coloring to $\eta_1$ to show that it witnesses big Ramsey degrees of finite tuples in $\eta$ on every copy of $\eta$ in $\eta_1,$ and consequently in $\fLL^*$. This 
work gives additional confirmation
that ultraproducts are a suitable environment
for 
studying Ramsey properties of finite and infinite structures. 
\end{abstract}

\maketitle


\setcounter{page}{1}
\thispagestyle{empty}

\begin{small}
\renewcommand\contentsname{\!\!\!\!}
\setcounter{tocdepth}{3}
\tableofcontents
\end{small}

\newpage


\section{Introduction} This paper fits into a general framework of investigation of the transfer of combinatorial properties from a class of finite structures to a `limit' of that class. Various limits of such classes have appeared in the literature and have been extensively studied, including Fra{\"i}sss{\'e} limits, ultraproducts and graphons. Transfers of various combinatorial and logical properties have been considered, including first order properties of structures, graph invariants and model-theoretical classification. The research spans from the classical works such as \cite{Los}, to some more recent works that have already become classics \cite{Lovaszgraphons}, from mathematics, to computer sciences.
Here we shall be interested in the transfer of {\it Ramsey properties} from a class of finite structures to their {\it ultraproduct}.

Ramsey theory started with the celebrated work of Ramsey 
\cite{Ramseyth}, in particular the Finite Ramsey Theorem,
which states that for any natural numbers $r, s, k$, there exists a number $R(r,s,k)$, called the Ramsey number, such that for any coloring of $r$-tuples of a finite set with at least $R(r,s,k)$ elements into $s$ colors, there is a
monochromatic subset of size $k$. This theorem follows from the Infinite Ramsey Theorem, which states that for
any coloring of $r$-tuples of an infinite set into 
$s$ colors, there is an infinite monochromatic subset.\footnote{It is worthwhile noting that Ramsey's original work actually starts with the infinite case, stated in terms of classification of binary relations on a countably infinite set.}
Ramifications of this theorem in various directions have been overwhelmingly present in all areas of discrete mathematics (see for example the books \cite{MR795592}, \cite{stevoOCA} and a survey article \cite{Hajnal-Larson}), so much so that it would be an injustice to attempt to give a historical overview in this limited space. Suffices to say that Ramsey theory is now in the very core of discrete mathematics, combinatorial set theory and theoretical computer sciences, with important applications to other fields of mathematics, such as topology, Banach spaces, or operator algebras.

Much research has been done into possible generalizations of Ramsey's Theorem, be it to larger infinities such as in the book \cite{MR795592}, or with strengthenings of the conclusion to preserve not just the prescribed size of a set, but also some structure on it. We shall be interested in the latter direction, which is called {\em structural Ramsey theory}. Even if in some particular cases, mostly of certain classes of graphs, there are positive results in structural Ramsey theory, it turns out that in general, preserving the structure by monochromatic sets is highly non-trivial. For example, the last open problem from Erd\H{o}s' list dates from 1956 and asks to characterize countable ordinals $\alpha$ such that every coloring of the pairs from $\alpha$ into two colors, say red and blue, has either a blue subset of order type $\alpha$ or a red subset of order type 3 (see \cite{Hajnal-Larson}). A classical example due to Sierpi\'nski \cite{MR1556708}
shows that there is a coloring of the pairs of the set $\mathbb{Q}$ of the rationals into two colors, such that there is no monochromatic set of the order type of $\mathbb{Q}$.

In view of these negative results, recent research in structural Ramsey theory has concentrated on the notion of {\it Ramsey degrees}. The idea is that even though finding structured monochromatic sets might be difficult or impossible, in many situations one can measure the degree of that difficulty. For example, when it comes to the colorings of the $n$-tuples of the rationals, Laver proved that this degree is finite (unpublished). Building on this result, Devlin in his 1979 Ph.D. thesis (for an elegant combinatorial proof, see \cite{Vuksanovic}) showed that there is a close connection of these degrees with the known sequence of tangent numbers. Let us introduce a bit of notation in order to introduce this result.

\begin{notation}\label{structureRamsey}
Let $\LL$ be a (first-order) signature 
and let $\fMM$, $\fNN$ and $\fOO$ be $\LL$-structures. We write $\binom{\fNN}{\fMM}$ to denote the set of all substructures of $\fNN$ that are isomorphic to $\fMM$. A function $c : \binom{\fNN}{\fMM} \raw k$ for some natural number $k$ is called a coloring.
Given a coloring $c$ of $\binom{\fNN}{\fMM}$ and a substructure $\fNN'$ of $\fNN$, we say that $\binom{\fNN'}{\fMM}$ is \emph{$\ell$-chromatic by $c$} if $c$ takes at most $\ell$-many  values on $\binom{\fNN'}{\fMM}$.

For $k, \ell \in \omega$, the standard Erd\H{o}s-Rado style arrow notation for this
\[
    \fOO \lraw \bigl(\fNN \bigr)^\fMM_{k,\ell}
\]
asserts that for any $k$-coloring $c$ of the copies of $\fMM$ in $\fOO$ there exists a copy $\fNN'$ of $\fNN$ in $\fOO$ such that $\binom{\fNN'}{\fMM}$ is $\ell$-chromatic by $c$.
If $\fOO=\fNN$ and there is some $n$ such that $\fMM$ denotes the unstructured $n$-tuples of $\fNN$, we simply write $\fNN \lraw \bigl(\fNN \bigr)^n_{k,\ell}$.
\end{notation}

In this notation, Devlin's theorem is that for any natural numbers $n$ and $k$
\[
\eta\lraw (\eta)^n_{k,t_n},
\]
where $t_n$ is the $(2n-1)$-st tangent number (and it does not depend on $k$) and $\eta$ denotes the order type of the rational numbers. For example, for pairs $t_2=2$, so any coloring of the pairs of the rationals into two colors will have
a $2$-chromatic subset isomorphic to the rationals.

Generalizing this idea and the idea of Ramsey numbers, \cite{FOUCHE1997309} introduces the notion of (small) Ramsey degrees. 

\begin{definition}\label{smallRamseydeg}
For $\K$ a class of finite $\LL$-structures and $\AAA\in \K$, we say that $\AAA$ has {\bf finite (small) Ramsey degree in $\bm{\K}$} if there exists $\ell \in \omega$ satisfying the following property: For each $k \in \omega$ and each $\BB \in \K$  there exists $\CC \in \K$ such that the partition relation 
\[ 
    \CC \lraw \bigl(\BB \bigr)^\AAA_{k,\ell} \, 
\]
holds. The smallest such $\ell \in \omega$ (if it exists) is called the {\bf Ramsey degree of $\bm{\AAA}$ in $\bm{\KK}$} and denoted $\bm{t(\AAA,\K)}$. \footnote{When $t(\AAA, \KK) = 1$, we say that $\AAA$ is a \emph{Ramsey object in $\KK$} and if no finite $t(\AAA, \K)$ exists, we say that $\AAA$ has \emph{infinite Ramsey degree in $\K$}.}

The Ramsey degree analogue of Ramsey numbers in the case of infinite structures is called {\em big Ramsey degrees}. 
Namely, let 
$\fNN$ be an infinite and $\AAA$ a finite $\LL$-structure. Then $\AAA$ has {\bf finite big Ramsey degree in $\fNN$} if there exists $\ell \in \omega$ such that for each $k \in \omega$, the partition relation
\[ 
    \fNN \lraw \bigl(\fNN \bigr)^\AAA_{k,\ell} \, 
\]
holds. The smallest such $\ell \in \omega$ is called the {\bf big Ramsey degree of $\bm{\AAA}$ in $\bm{\fNN}$} and denoted $\bm{ T(\AAA,\fNN)}$, where we set $\bm{ T(\AAA,\fNN)} = \infty$ if no such $\ell \in \omega$ exists.
\end{definition}

The question that we address in this paper is a specific instance of the general question of the relationship that
exists between the big and small Ramsey degrees of a finite $\LL$-structure, and in particular if the infinite structure under question has been obtained as some sort of limit of a known class of finite structures. In the case of 
Fra{\" i}ss{\'e} limits many interesting results have been obtained; an excellent survey by one of the main contributors to the area is Dobrinen's ICM 2022 address \cite{NatashaICM2022}. We recall that Fra{\" i}ss{\'e} limits are countably infinite structures.

Going to uncountable limits of finite structures, a natural candidate to consider is an {\em ultraproduct}.
An ultraproduct of a sequence $(\MM_i)_{i\in\omega}$ of finite structures of the same signature is defined using a given ultrafilter $\DD$ on $\omega$. It is simply the Cartesian product 
$\prod_{i\in\omega} \MM_i$ reduced by the equivalence relation of being the same on ultrafilter-many coordinates. This is denoted by $\fMM^\ast=\prod_{i\in\omega} \MM_i/\DD$. An ultraproduct $\fMM^\ast$ of the type we described, if infinite, always has the size of the continuum, $2^{\aleph_0}$ (see Theorem \ref{sizec}).   
Ultraproducts form a classical object of study in both model theory and set theory, see for example \cite{C-K} and \cite{Kanamori}\footnote{A wide mathematical audience became enthusiastic about ultraproducts  after their appearance in the work and in the online blog \cite{Taoultra} by Tao.}. A fundamental property of ultraproducts is given by \los's theorem \cite{Los}, which states that any first order sentence true in ultrafilter-many $\MM_i$ is true in the ultraproduct. This property makes it very natural to ask which Ramsey properties from the sequence of finite structures carry to the ultraproduct (Ramsey properties, of course, are in general not first order.)

The {\bf main theorem} 
of this paper is Theorem \ref{fabulous}. It concerns the case of $\LL$ being a finite relational signature and the sequence 
$(\MM_i)_{i\in\omega}$ sufficiently `increasing' with respect to the ultrafilter $\DD$ (the technical term is $\DD$-trending). The 
ultrafilter is assumed not to contain singletons (a common technical assumption).
Let $\fNN$ be an $\LL$-structure of size at most $\aleph_1$,
whose connection with $(\MM_i)_{i\in\omega}$ is that the class $\KK$ of all finite substructures of $\fNN$,  called the {\em  age} of $\fNN$, is contained in the union of the ages of the $\MM_i$, for $i \in \omega$. Note, that by the known saturation properties of the ultraproduct, in this case
$\fNN$ is isomorphic to a substructure of $\fMM^\ast=\prod_{i\in\omega} \MM_i/\DD$.

In this setup, suppose that $\AAA\in\KK$ has finite small Ramsey degree $t$
in $\KK$. The corollary to the main theorem, Corollary \ref{CHcorollary}, answers the following natural question under CH: what can be said
about Ramsey properties of the copies of $\fNN$ within $\fMM^\ast$, with 
respect to $\AAA$? The answer is that the Ramsey degree remains bounded by $t$, but provided we restrict the colorings to the {\em internal} ones.

What does it mean for a coloring to be internal? The precise definition of this is given in Definition \ref{internalcolorcopies} but, roughly, these are the colorings that are defined on copies of $\AAA$ in $\fMM^\ast$ by the ultraproduct transfer of a sequence of $(c_i)_{i\in\omega}$, where each $c_i$ is a coloring of copies of $\AAA$ in $\MM_i$.  Colorings that are not internal will be called \emph{external}.

Having made sense of the formal statement of Theorem \ref{fabulous}, let us comment on its hypotheses. Both the hypothesis on the size of
$\fNN$ and on the colorings being internal are justified. The former because the saturation of the ultraproduct is limited to structures of
size $\aleph_1$ and, after all, the ultraproduct itself might have size
$\aleph_1$ if we happen to be in a model of CH. The latter assumption
of Theorem \ref{fabulous} is more complex to justify, but we do so in Section \ref{eta1}. Namely, Corollary \ref{maincounterexample2} gives a rather dramatic  counterexample to Theorem \ref{fabulous} under CH in the case when the coloring is external.

In addition to the main theorem, Theorem \ref{fabulous}, the paper explores two further directions. The first concerns the general case of internal colorings on ultraproducts with any infinite number of coordinates. Theorem \ref{fabulous_general} shows that, with slight changes in the proof, an analogue of
Theorem \ref{fabulous} holds in this general case. It allows us to have an explicit calculation of an upper bound for
internal Ramsey degrees under GCH, as is done in Corollary \ref{GCHcorollary}. In contrast with this direction which led us from the 
concrete to the abstract, in Section \ref{eta1} we go to the very concrete and explore the case of linear orders under CH.  We first show how Corollary \ref{maincounterexample} follows from results in \cite{Tod87}, and thus show how Corollary \ref{CHcorollary} fails for an ultraproduct of finite linear orders if the coloring is allowed to be external.
The result in Corollary \ref{maincounterexample2}, already mentioned above, is obtained after a more detailed analysis of the ultraproduct of finite linear orders under CH. We show that the coloring of finite tuples of the Cantor tree by Devlin embedding types as in 
\cite{Vuksanovic} and \cite{MR2603812} can be extended to $2^{<\omega_1}$ to witness the optimal failure of Corollary \ref{CHcorollary}.

The paper is organized as follows. Section \ref{prelim} gives general preliminaries. Further preliminaries, related to partition properties and internal colorings, are given in Section \ref{intercolo}. The Main Theorem \ref{fabulous} appears in Section \ref{omegacase} and its generalization to further cardinals, under GCH, is given in Section \ref{general case}. The case of linear orders under CH is presented in Section \ref{eta1}. Various future directions are mentioned in Section \ref{openproblems}. The paper finishes by Section \ref{Appendix}, which is an appendix that gives a self-contained proof of Lemma \ref{technical}. (In Section \ref{prelim} we quoted Lemma \ref{technical} from \cite{Sh-c} but the proof in \cite{Sh-c} was left to the reader.)

\section{Preliminaries}\label{prelim}

In this section, we introduce our notation and describe some of the basic model-theoretic ideas that we will use. For basic logical notions such as signatures, formulas, structures and models, we refer the reader to \cite{ho93} or \cite{ma02}. For more details on types, saturation and ultraproducts see \cite{C-K}, as well as \cite{Sh-c}.  As usual, Ord denotes the class of ordinals, and AC, CH and GCH denote the Axiom of Choice, the Continuum Hypothesis and the Generalized Continuum Hypothesis, respectively.  We use $\mathfrak c$ to denote the cardinality of the set $2^\omega$. 

\subsection{Model-theoretic basics}

\subsubsection*{Structures, Embeddings and Substructures}

We will use the convention that fraktur letters $\fAA$, $\fBB$, $\fMM$, $\fNN$, etc. denote arbitrary structures, finite or infinite, and reserve calligraphic letters $\AAA$, $\BB$, $\MM$, $\NN$, etc. to denote finite structures, with the exception of $\LL$ (possibly with decorations) which will always denote a signature, and $\K$ (possibly with subscripts) which will always denote a class of structures in some fixed signature. 

For structures denoted by $\fAA$ (or $\AAA$), the corresponding roman letter $A$ will denote the underlying set of $\fAA$ (or $\AAA$), and $|A|$  will denote the cardinality of $A$. The cardinality of a structure is defined to be the cardinality of its underlying set. We will assume that all structures have nonempty underlying set.

For the rest of this subsection, $\LL$ will denote a signature of arbitrary cardinality.

Let $\fAA$ and $\fBB$ be $\LL$-structures.  An {\em $\LL$-embedding} (or simply {\em embedding}) from $\fAA$ to $\fBB$ is an injective function $\sigma: A \to B$ which preserves the interpretations of all constant, relation and function symbols from $\LL$, i.e, such that 
\begin{itemize}

    \item
    for any constant symbol $c$ in $\LL$, $\sigma(c^\fAA) = c^\fBB$; 
    
    \item
    for any function symbol $f$ in $\LL$ of arity say $i$, and any $a_0, \ldots , a_{i-1} \in A$, 
    \[ 
        \sigma(f^{\fAA}(a_0, \ldots , a_{i-1})) =  f^{\fBB}(\sigma(a_0), \ldots , \sigma(a_{i-1})); \text{ \ and } 
    \]
    
    \item 
    for any relation symbol $R$ in $\LL$ of arity say $j$, and any $a_0, \ldots , a_{j-1} \in A$, 
    \[ 
        \fAA \vDash R^{\fAA}(a_0, \ldots , a_{j-1}) \text{ \ if and only if \ } \fBB \vDash R^{\fBB}(\sigma(a_0), \ldots , \sigma(a_{j-1})).
    \]
\end{itemize}
An {\em $\LL$-isomorphism} (or simply {\em isomorphism}) from $\fAA$ to $\fBB$ is a bijective embedding from $\fAA$ to $\fBB$. We write $\fAA \hookrightarrow \fBB$ when there exists an embedding from $\fAA$ to $\fBB$, and we write $\fAA \cong \fBB$ when there exists an isomorphism from $\fAA$ to $\fBB$. When $\fAA \cong \fBB$, we say that $\fAA$ is a {\em copy} of $\fBB$. An {\em $\LL$-automorphism} (or simply {\em automorphism}) of an $\LL$-structure $\fAA$ is an isomorphism from $\fAA$ to $\fAA$.

Two $\LL$-structures $\fAA$ and $\BB$ are {\em elementarily equivalent}, written $\fAA \equiv \fBB$, when $\fAA$ and $\fBB$ satisfy the same first-order $\LL$-sentences.

An $\LL$-structure $\fAA$ is a {\em substructure} of an $\LL$-structure $\fBB$ if $A \subseteq B$ and the inclusion map is an embedding. For an $\LL$-structure $\fBB$ and subset $X$ of $B$, the {\em substructure of $\fBB$ generated by $X$}, written $\langle X \rangle_\fBB$, is the smallest substructure of $\fBB$ whose underlying set contains $X$. (Note that there is a unique such substructure of $\fBB$.) When $\fAA := \langle X \rangle_\fBB$, we say that $\fAA$ is the substructure {\em induced} by $\fBB$ on $X$. A substructure $\fAA$ of an $\LL$-structure $\fBB$ is {\em finitely generated} if it is generated by some finite subset of $B$, and the structure $\fBB$ is {\em locally finite} if every finitely generated substructure of $\fBB$ is finite. 

\begin{fact}
When $\LL$ is relational, for any $\LL$-structure $\fBB$ and subset $X$ of $B$, the underlying set of the substructure $\langle X \rangle_\fBB$ is $X$ itself. Hence, any relational structure is locally finite.
\end{fact}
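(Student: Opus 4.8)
The plan is to show that when $\LL$ is relational every subset $X$ of $B$ already carries a substructure of $\fBB$, so that the generated substructure $\langle X \rangle_\fBB$ cannot properly extend $X$. First I would unwind the definition of substructure given above: $\fAA$ is a substructure of $\fBB$ precisely when $A \subseteq B$ and the inclusion map $\iota : A \hookrightarrow B$ is an $\LL$-embedding. The embedding clauses for constant symbols and for function symbols are exactly what force the underlying set of a substructure to contain every constant of $\fBB$ and to be closed under every function of $\fBB$. When $\LL$ has no constant or function symbols, both of these clauses are vacuously satisfied, and the only surviving requirement is the clause for relation symbols.

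Next I would exhibit a substructure on $X$ directly. I define an $\LL$-structure $\fAA$ with underlying set $X$ by interpreting each relation symbol $R \in \LL$ of arity $j$ as $R^{\fAA} := R^{\fBB} \cap X^{j}$. With this interpretation the inclusion $\iota : X \hookrightarrow B$ satisfies the relation clause by construction, since for any tuple $a_0, \ldots, a_{j-1}$ from $X$ we have $\fAA \vDash R^{\fAA}(a_0, \ldots, a_{j-1})$ if and only if $\fBB \vDash R^{\fBB}(a_0, \ldots, a_{j-1})$, and the constant and function clauses are vacuous as noted. Hence $\fAA$ is a substructure of $\fBB$ whose underlying set is exactly $X$.

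Finally, $\langle X \rangle_\fBB$ is by definition the smallest substructure of $\fBB$ whose underlying set contains $X$; since every such substructure must have underlying set containing $X$, and $\fAA$ realizes $X$ exactly, the underlying set of $\langle X \rangle_\fBB$ is $X$ itself. For the second assertion, any finitely generated substructure is of the form $\langle X \rangle_\fBB$ for some finite $X \subseteq B$, so by what was just shown its underlying set is $X$, which is finite; thus every finitely generated substructure is finite, which is precisely local finiteness.

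There is no genuine obstacle here: the statement is immediate once the definition of substructure is unwound, and the only point requiring care is the verification that the relational structure induced on $X$ really does satisfy the (single nontrivial) embedding clause, which holds by the very definition of the induced interpretations.
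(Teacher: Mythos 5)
Your proof is correct, and since the paper states this Fact without proof (treating it as immediate from the definition of substructure), your write-up is exactly the standard unwinding one would supply: observe that the constant and function clauses of the embedding definition are vacuous for a relational signature, equip $X$ with the restricted relations $R^{\fBB} \cap X^{j}$, and conclude that this induced structure is itself the smallest substructure whose universe contains $X$. The only point worth noting is the degenerate case $X = \emptyset$, which conflicts with the paper's standing convention that structures have nonempty underlying sets; this is a convention-level issue present in the paper's own formulation and does not affect the substance of your argument.
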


In subsequent sections, we will be considering a special kind of collection of finitely generated structures, called an {\em age}.

\begin{definition}
    For an $\LL$-structure $\fAA$, the {\bf age} of $\fAA$, written $\bm{\age(\fAA)}$, is the class of all finitely generated $\LL$-structures that embed into $\fAA$. 
\end{definition}
\noindent Note that the age of a nonempty structure will not be a set. 

The age of any structure has two important properties, the {\em Hereditary} and the {\em Joint Embedding} Property, which may be seen respectively as closure under certain substructure operations and under certain ``gluings'' via embeddings. These we now define.

\begin{definition}
    Let $\K$ be a class of $\LL$-structures. We say that $\K$ has the {\bf Hereditary Property} (abbreviated {\bf HP}) if, whenever $\fBB \in \K$ and $\fAA$ is a finitely generated  substructure of $\fBB$, we have $\fAA \in \K$. We say that $\K$ has the {\bf Joint Embedding Property} (abbreviated {\bf JEP}) if, whenever $\fAA, \fBB \in \K$, there exists $\fCC\in \K$ such that $\fAA \hookrightarrow \fCC$ and $\fBB \hookrightarrow \fCC$.
\end{definition}

\begin{remark}
It is immediate that the age of any $\LL$-structure is nonempty and has the HP and JEP. Fra{\"i}ssé (see \cite{shorter-model}) proved the converse in the countable case, that is, whenever a nonempty class $\K$ of finitely generated $\LL$-structures has only countably many pairwise non-isomorphic elements and has the HP and JEP, the class $\K$ is the age of some countable $\LL$-structure.
\end{remark}

Let $\LL$ and $\wh{\LL}$ be signatures such that $\LL \subseteq \wh{\LL}$, and let $\fAA$ be an $\LL$-structure and $\wh{\fAA}$ an $\wh{\LL}$-structure. Then $\fAA$ is the {\em $\LL$-reduct} (or simply {\em reduct}) of $\wh{\fAA}$, equivalently  $\wh{\fAA}$ is an {\em $\wh{\LL}$-expansion} (or simply {\em expansion}) of $\fAA$,  when $\fAA$ and $\wh{\fAA}$ have the same underlying set and the interpretation in $\fAA$ of each constant, function and relation symbol of  $\LL$ coincides with the interpretation of that symbol in $\wh{\fAA}$. We denote the $\LL$-reduct of an $\wh{\LL}$-structure $\wh{\fAA}$ as ${\wh{\fAA}}\upharpoonright {\LL}$. 

\subsubsection*{Functions, Tuples and Formulas}

For sets $X$ and $Y$, we write $Y^X$ to denote the set of all functions from $X$ to $Y$.  Given $m \in \omega$, $[X]^m$ denotes the set of all $m$-element subsets of $X$.  Given an infinite set $X$, $A \subset_\omega X$ denotes that $A$ is a finite subset of $X$.  

\begin{notation}
Given a function $f: X \raw Y$ and $X_0 \subseteq X$, we write $f'' \, X_0$ for the image of $X_0$ under $f$, i.e., 
\[
    f'' X_0 : = \{ y \in Y : f(x)=y \text{ \ for some \ } x\in X_0\}.
\]
\end{notation}

Given a set $A$, a \textit{tuple from $A$} (or {\em sequence from $A$}) is an element $\ov{a} := (a_i)_{i\in \alpha}$ of $A^\alpha$ for some ordinal $\alpha$ (not necessarily finite), and we may write $\ov{a} \in A$ if no confusion arises. When $\ov{a} \in A^\alpha$, we call $\alpha$ the {\em length} of $\ov{a}$, denoted $|\ov{a}|$ (where there is no confusion with the domain of an $\LL$-structure.) An {\em $\alpha$-tuple} is a tuple of length $\alpha$. We define the {\em range} of $\ov{a}$ to be $\ran(\ov{a}):=\{a_i : i \in |\ov{a}| \}$.  Given sequences $s$ and $t$, the concatenation of $s$ with $t$ is denoted by $s^\smallfrown t$.  Given $\alpha \in |s|$, $s \uphp \alpha$ denotes the restriction of the sequence $s$ to $\alpha$ (the restriction of the sequence to the first $\alpha$ coordinates).

\begin{notation}\label{parameters}
For a first-order formula $\varphi$ and tuple of variables $\ov{x}$, the notation $\varphi(\ov{x})$ indicates that the free variables of $\varphi$ are elements of $\ran(\ov{x})$. For a formula $\psi$ and tuples of variables $\ov{x}$ and $\ov{y}$, the notation $\psi(\ov{x};\ov{y})$ indicates that the free variables of $\psi$ are among the elements of $\ran(\ov{x}) \cup \ran(\ov{y})$ and that $\ran(\ov{x})$ and $\ran(\ov{y})$ are disjoint.  Assuming that $\ran(\ov{y})$ and $\ran(\ov{a})$ are disjoint and $|\ov{y}|=|\ov{a}|$, by the notation $\psi(\ov{x};\ov{y}/\ov{a})$ we mean the formula obtained by replacing each occurrence of $y_i$ in $\psi(\ov{x};\ov{y})$ with $a_i$, for all $i \in |\ov{a}|$, what we may also write as $\psi(\ov{x};\ov{a})$.
If we work over an $\LL$-structure $\fAA$ and use $\varphi(\ov{a})$ or $\psi(\ov{x}; \ov{a})$ where $\ov{a}$ is a tuple from $A$, the elements of $\ran(\ov{a})$ are called {\em parameters} and $\varphi(\ov{a})$, $\psi(\ov{x}; \ov{a})$ are referred to as $\LL$-formulas with {\em parameters from $A$}. We write ``$\fAA \vDash \varphi(\ov{a})$'' to indicate that $\ov{a}$ satisfies $\varphi(\ov{x})$ in $\fAA$. This can be made formal by expanding the signature to include new constant symbols for the parameters, as explained in Ch.~5 of \cite{C-K}.

\end{notation}

A \textbf{literal} is either an atomic formula or the negation of an atomic formula, see \cite{ho93}.

\subsubsection*{Types and Saturation}

Types are consistent sets of first-order formulas all in the same number of variables. We will assume that all types are closed under (first-order) logical consequence.  
Types are usually treated as collections of formulas whose free variables come from some fixed finite set of variables. As noted on p. xxxiii of \cite{Sh-c}, it is sometimes useful to consider types in infinitely many variables, as we will see in Lemma \ref{technical}.

\begin{definition}\label{shdef} 
Fix $m \in \omega$. Let $\fAA$ be an $\LL$-structure, $B\subseteq A$, and $\ov{x}$ an $m$-tuple of variables. Suppose $p := p(\ov{x})$ is a set of $\LL$-formulas with parameters from $B$ such that every element of $p$ is of the form $\psi := \psi(\ov{x}; \ov{b})$ for some tuple $\ov{b}$ from $B$. Then $p$ is an {\bf $\bm{m}$-type over $\bm{B}$ in free variables $\bm{\ov{x}}$, with respect to $\bm{\fAA}$}, (or simply an {\bf $\bm{m}$-type}) if $p$ is finitely satisfiable in $\fAA$, i.e., for every finite $q \subseteq p$, we have $\fAA \vDash (\exists \ov{x}) \bigwedge_{\varphi \in q} \varphi(\ov{x})$.

We extend this definition to tuples of variables of infinite length by writing $p := p(\ov{x})$ with $\ov{x} := (x_i)_{i \in \alpha}$ for some  arbitrary ordinal $\alpha$ when $p$ is a finitely satisfiable set of $\LL$-formulas all of the form $\psi := \psi(x_{i_0},\ldots,x_{i_{n-1}}; \ov{b})$ for some finite sequence $x_{i_0},\ldots,x_{i_{n-1}}$ of variables
included in $\ran(\bar{x})$ and $\ov{b}$ from $B$. The sequence $x_{i_0},\ldots,x_{i_{n-1}}$ is allowed to change with $\psi$. In this case, we simply call $p$ a {\em type over $B$ in free variables $\ov{x}$.}

A maximal type over $\emptyset$, that is a maximal consistent set of formulas in some tuple of variables $\ov{x}$, 
is said to be {\em complete}. When we wish to emphasize that a type under consideration is not necessarily complete, we may call it {\em a partial type}.
\end{definition}

\begin{remark}\label{formality}
In many cases (such as in compactness arguments) it is useful to consider a type with variables from a set that is not an ordinal, in which case the set of variables is assumed to be re-indexed in some way as a sequence, possibly using AC, where the specific indexing is not important.  For example, given an ordinal $\alpha$, a sequence of variables $(x_i)_{i \in \alpha}$ and a subset $B \subseteq \alpha$, we may write $(x_i)_{i \in B}$.
\end{remark}

\begin{definition}\label{domp}
Let $\fAA$ be an $\LL$-structure and $p:= p(\ov{x})$ a type over $A$ in free variables $\ov{x}$, where $\ov{x}$ is a tuple of variables of arbitrary length. The {\bf domain of $p$}, denoted $\bm{\Domp(p)}$, is the set
\[
   \Domp(p) := \bigcap \{B \subseteq A : p \text{~is a type over $B$ in free variables $\ov{x}$}\},   
\]
i.e., $\Domp(p)$ is the smallest parameter set from $A$ over which $p$ is a type in free variables $\ov{x}$.
\end{definition}

Recall Notation \ref{parameters} describing formulas with parameters from some set.

\begin{definition} \label{realized}
Let $\fAA$ be an $\LL$-structure and $\ov{x} : = (x_i)_{i \in \alpha}$ a tuple of variables. A type $p(\ov{x})$ over $A$ in free variables $\ov{x}$ is {\bf realized in $\bm{\fAA}$} if there exists a sequence of parameters $\ov{a} :=(a_i)_{i \in \alpha}$ from $A$ such that $\fAA \vDash \psi(\ov{a})$ for all $\psi(\ov{x}) \in p(\ov{x})$.  We may say that $\ov{a}$ realizes $p(\ov{x})$ in $\fAA$.
\end{definition}

In particular, a 1-type $p(x)$ over $A$ in free variable $x$ is realized in $\fAA$ if there exists $a \in A$ such that $\fAA \vDash \varphi(a)$ for all $\varphi(x) \in p(x)$. Structures realizing 1-types over large domains are of particular interest.

\begin{definition} \label{kappasat}
Let $\kappa$ be an infinite cardinal.  A structure $\fAA$ is {\bf $\bm{\kappa}$-saturated (for 1-types)} if, whenever $p := p(x)$ is a 1-type over $A$ such that $|\Domp(p)|<\kappa$, the type $p$ is realized in $\fAA$.
\end{definition}

\begin{remark} For any $m\in \omega$, there is an obvious notion of {\em $\kappa$-saturation for $m$-types} parallel to Definition \ref{kappasat}. A structure is $\kappa$-saturated for 1-types if and only if it is $\kappa$-saturated for $m$-types for all $m \in \omega$, see Proposition \ref{ckprop}.
\end{remark}

It is well-known that among $\kappa$-saturated structures, elementary equivalence is sufficient to determine a structure of a given cardinality up to isomorphism.  
\begin{theorem}[Theorem 5.1.17 of \cite{C-K}]\label{uniquemodel} 
Let $\kappa$ be an infinite cardinal and let $\fAA$, $\fBB$ be $\LL$-structures. If  $\fAA \equiv \fBB$, $|A|=|B|=\kappa$, and both $\fAA$ and $\fBB$ are $\kappa$-saturated, then $\fAA \cong \fBB$.
\end{theorem}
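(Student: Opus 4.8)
The plan is to build an isomorphism by a back-and-forth construction of length $\kappa$. First I fix enumerations $A = \{a_\xi : \xi < \kappa\}$ and $B = \{b_\xi : \xi < \kappa\}$ and construct an increasing chain $(f_\xi)_{\xi < \kappa}$ of \emph{partial elementary maps} from $\fAA$ to $\fBB$: each $f_\xi$ has $\dom(f_\xi) \subseteq A$, $\rge(f_\xi) \subseteq B$, and satisfies $\fAA \vDash \varphi(\ov{c})$ iff $\fBB \vDash \varphi(f_\xi(\ov{c}))$ for every $\LL$-formula $\varphi$ and every tuple $\ov{c}$ from $\dom(f_\xi)$, with the invariant $|\dom(f_\xi)| < \kappa$. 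I start from $f_0 = \emptyset$, which is partial elementary precisely because the hypothesis $\fAA \equiv \fBB$ handles the case of the empty tuple. The bookkeeping alternates: at stage $\xi+1$ I first arrange $a_\xi \in \dom(f_{\xi+1})$ (the ``forth'' step) and then $b_\xi \in \rge(f_{\xi+1})$ (the ``back'' step); at a limit $\lambda$ I set $f_\lambda = \bigcup_{\xi < \lambda} f_\xi$.

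The heart of the matter is the single-element extension step, and this is where $\kappa$-saturation is used. Suppose $f : A_0 \to B_0$ is partial elementary with $|A_0| < \kappa$ and $a \in A \setminus A_0$ is given. I would form the set of $\LL$-formulas with parameters from $B_0$,
\[
    p(x) := \{\, \varphi(x; f(\ov{c})) : \ov{c} \text{ a tuple from } A_0,\ \fAA \vDash \varphi(a; \ov{c}) \,\}.
\]
Because $f$ is elementary, every finite fragment $q \subseteq p$ is satisfiable in $\fBB$: the element $a$ witnesses $\fAA \vDash \exists x \bigwedge_{\varphi \in q} \varphi(x; \ov{c})$, and elementarity of $f$ transports this existential statement, with parameters $\ov{c}$ replaced by their images $f(\ov{c})$, to $\fBB$. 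Hence $p$ is a $1$-type over $B_0$ in the sense of Definition \ref{shdef}, with $|\Domp(p)| \le |B_0| \le |A_0| < \kappa$. By $\kappa$-saturation of $\fBB$ (Definition \ref{kappasat}), $p$ is realized by some $b \in B$, and then $f \cup \{(a,b)\}$ is again partial elementary. The symmetric argument, using $\kappa$-saturation of $\fAA$, supplies the ``back'' step. Note that only saturation for $1$-types is required, since each step adjoins a single point.

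It then remains to verify the limit stages and the conclusion. Each successor step enlarges the domain by at most two elements, and at a limit $\lambda < \kappa$ one has $|\dom(f_\lambda)| \le \max(|\lambda|, \aleph_0) < \kappa$ because $|\lambda| < \kappa$; thus the invariant $|\dom(f_\xi)| < \kappa$ persists and saturation stays applicable at every stage. A union of an increasing chain of partial elementary maps is again partial elementary, since elementarity is a requirement on finite tuples and every finite tuple from $\bigcup_\xi \dom(f_\xi)$ already lies in some single $f_\xi$. Finally $f := \bigcup_{\xi<\kappa} f_\xi$ satisfies $\dom(f) = A$ and $\rge(f) = B$ by the forth and back requirements, and is an elementary bijection $A \to B$, hence an $\LL$-isomorphism witnessing $\fAA \cong \fBB$.

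I expect the main obstacle to be the bookkeeping that simultaneously keeps each $\dom(f_\xi)$ of size $< \kappa$ — so that saturation can be invoked at each of the $\kappa$ stages — and exhausts both $A$ and $B$, which is what forces the final map to be onto. The remaining ingredients, namely the transfer of finite satisfiability across an elementary map and the stability of elementarity under unions of chains, are routine once the extension step is in place.
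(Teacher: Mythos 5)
Your proof is correct, and it is exactly the classical back-and-forth argument: the paper itself gives no proof of this statement, quoting it as Theorem 5.1.17 of \cite{C-K}, and the proof given there is the same construction you describe (a chain of partial elementary maps of size $<\kappa$, extended one point at a time by realizing a $1$-type over the current range, which is legitimate since the parameter set stays of size $<\kappa$). Your observation that only saturation for $1$-types is needed, and your handling of the cardinality invariant at limit stages, match the standard treatment, so there is nothing to correct.
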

Theorem \ref{uniquemodel} together with Corollary \ref{minimal_saturation} guarantee that,  under CH, there is a unique $\aleph_1$-dense linear order of size $\aleph_1$, as used in Section \ref{eta1}.

The next lemma plays a key role in the proofs of Theorems \ref{fabulous} and \ref{fabulous_general}. As its proof is left to the reader in the original reference \cite{Sh-c}, an exposition of the proof is provided in Section \ref{Appendix}, an appendix to this paper.

\begin{lemma}[Lemma 1.12 in Ch.~I of \cite{Sh-c}]\label{technical} Let $\lambda$ be an infinite cardinal and $\fMM$ a $\lambda$-saturated structure.  Suppose $p$ is a type over $M$ in free variables $\ov{x}$ with respect to $\fMM$  
such that $|\ov{x}| \leq \lambda$
and such that $|\Domp(p)|< \lambda$. Then $p$ is realized in $\fMM$.
\end{lemma}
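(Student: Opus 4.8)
The plan is to realize $p$ one variable at a time by transfinite recursion, using only $\kappa$-saturation for $1$-types (Definition \ref{kappasat}) at each step; the whole content of the lemma is that $1$-type saturation bootstraps to realize types in up to $\kappa$ many variables. Write $\ov{x} = (x_i : i < \alpha)$ with $\alpha \leq \kappa$, and set $D := \Domp(p)$, so $|D| < \kappa$. I will construct elements $a_i \in A$ for $i < \alpha$ so that, writing $q_i$ for the set of formulas obtained from $p$ by substituting $a_j$ for $x_j$ for all $j < i$, each $q_i$ is finitely satisfiable in $\fAA$ (equivalently, is a type in the remaining variables $(x_j : i \leq j < \alpha)$ over the parameter set $D \cup \{a_j : j < i\}$).

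The recursion step is the crux. Given that $q_i$ is finitely satisfiable, I form an auxiliary $1$-type $r_i(x_i)$ in the single free variable $x_i$ whose members are the formulas $\exists \ov{y}\, \bigwedge \Phi$, where $\Phi$ ranges over finite subsets of $q_i$ and $\ov{y}$ lists the variables $x_j$ with $j > i$ occurring in $\Phi$. Its parameters lie in $D \cup \{a_j : j < i\}$, a set of size $< \kappa$ since $|D| < \kappa$ and $|i| < \kappa$. A short argument shows $r_i$ is finitely satisfiable: a finite collection of its members comes from finitely many $\Phi_1, \ldots, \Phi_n \subseteq q_i$, and a single witness to $\exists x_i \, \exists(\text{all later variables occurring})\, \bigwedge_{l} \Phi_l$, which exists because $q_i$ is finitely satisfiable, simultaneously witnesses all $n$ of the existential formulas at that common value of $x_i$. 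By $\kappa$-saturation I obtain $a_i \in A$ realizing $r_i$. Then $q_{i+1}$ is again finitely satisfiable: any finite $\Phi' \subseteq q_{i+1}$ is of the form $\Phi[x_i / a_i]$ for a finite $\Phi \subseteq q_i$, and the member $\exists \ov{y}\, \bigwedge \Phi$ of $r_i$ holds at $a_i$, which yields a witness for $\Phi'$.

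Limit stages require no new choices. If $\lambda \leq \alpha$ is a limit and $\Phi \subseteq q_\lambda$ is finite, then $\Phi$ mentions only finitely many of the already-substituted variables, so $\Phi$ already occurs in $q_{j+1}$ for some $j < \lambda$ and is satisfiable by the induction hypothesis; hence $q_\lambda$ is finitely satisfiable. Finally, at stage $\alpha$ the set $q_\alpha$ has no free variables, so finite satisfiability says precisely that $\fAA \vDash \bigwedge \Phi$ for every finite $\Phi \subseteq q_\alpha$; in particular $\fAA \vDash \psi(\ov{a})$ for every $\psi \in p$, where $\ov{a} := (a_i : i < \alpha)$. Thus $\ov{a}$ realizes $p$ in $\fAA$.

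I expect the main obstacle to be bookkeeping rather than conceptual: setting up the auxiliary $1$-types $r_i$ so that realizing them genuinely preserves finite satisfiability of the tail type, and confirming that the parameter sets never reach size $\kappa$. The one genuinely delicate point is the verification that each $r_i$ is finitely satisfiable, where one must produce a single value of $x_i$ serving all of the finitely many existential statements at once; this is exactly where finite satisfiability of $q_i$, rather than mere separate consistency of its members, is needed.
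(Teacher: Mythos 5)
Your proposal is correct and follows essentially the same route as the paper's Appendix proof: a transfinite recursion realizing one variable at a time, where at each successor step the auxiliary $1$-type of existentially quantified finite conjunctions (your $r_i$, the paper's $\pi(x_\beta)$) is shown finitely satisfiable by taking unions of finite subsets of the current tail type, then realized by $\kappa$-saturation for $1$-types, with limit stages handled by the finiteness of each formula. The only cosmetic difference is that the paper formalizes the recursion via an explicit choice function $G$ using a well-ordering of the universe, whereas you leave the choices implicit.
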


\subsection{Ultrafilters and ultraproducts}

In the following, we let $I$ be an infinite set.

\begin{definition}  Given a set $I$,  a subset $\DD \subseteq \mathcal{P}(I)$ is a \emph{filter (over $I$)} if
	\begin{itemize}
	\item $I \in \DD$
	\item for any $X, Y \in \DD$, $X \cap Y \in \DD$
	\item for any $X \in \DD$ and $X \subseteq Y \subseteq I$, $Y \in \DD$.
	\end{itemize}

The filter $\DD$ is \emph{proper} if $\DD \neq \mathcal{P}(I)$.

The filter $\DD$ is \emph{principal} if there exists $X \in \DD$ such that for all $Y \in \DD$, $X \subseteq Y$.  In this case we say that $\DD$ is generated by $X$.

The filter $\DD$ is an \emph{ultrafilter} if for any set $X \in \mathcal{P}(I)$,  $X \in \DD$ if and only if $I \setminus X \notin \DD$.
\end{definition}

\begin{example} Given a set $I$, the Fr\'{e}chet filter $\DD := \{X \in \mathcal{P}(I) : I \setminus X \textrm{~is finite} \}$ is a filter.
\end{example}

\begin{remark}   It is well-known that AC implies that any proper filter over a set $I$ can be extended to an ultrafilter over $I$, hence providing a rich variety of ultrafilters.  For example, the Fr\'{e}chet filter can be extended to an ultrafilter.  In particular, the  Fr\'{e}chet filter over $I=\omega$ can be extended to an ultrafilter which must be nonprincipal, and every nonprincipal ultrafilter extends the Fr\'echet filter.
\end{remark}

Fix a sequence of $\LL$-structures $(\fMM_i)_{i \in I}$.  By $\prod_{i \in I} \fMM_i$ we denote the Cartesian product 
$\prod_{i \in I} M_i$ endowed with the coordinatewise interpretation of the symbols from $\LL$.
We frequently denote an element $a \in \prod_{i \in I} M_i$ as $\bigl (a[i] \bigr)_{i \in I}$ where $a[i] \in M_i$ for all $i \in I$.

Given an ultrafilter $\DD$ over $I$, we define the usual equivalence relation $\approx_\DD$ on elements $a, b \in \prod_{i \in I} M_i$, 
namely, 
$$
a \approx_\DD b  \textrm{ if and only if \quad } \{i : a[i]=b[i] \} \in \DD.
$$
If it is clear which $\DD$ we are working with, we may write $\approx$ in place of $\approx_\DD$.

Elements in the ultraproduct $\fMM^*:=\prod_{i \in I} \fMM_i / \DD$ will be written as $a \in \prod_{i \in I} M_i$ and understood up to $\approx$-equivalence.

Given $m \in \omega$ and a tuple $\ov{a} = (a_0,\ldots,a_{m-1})\in \fMM^*$, we denote
\[
 \ov{a}[i] := (a_0[i],\ldots,a_{m-1}[i]).
 \] 
Given a subset $U \subseteq \fMM^*$ and $i \in I$, define 
$U[i]:=\{u[i] : u \in U\}$. A fundamental theorem of ultraproducts is due to {\los} in \cite{Los} and using the notation just introduced, it states:

\begin{theorem}[\los's Theorem]\label{los}  For any $\ov{a} \in \fMM^\ast$ and
and a first-order $\LL$-formula  $\varphi(\ov{x})$ such that $|\ov{x}| = |\ov{a}|$, we have
\[
\fMM^* \vDash \varphi(\ov{a}) \textrm{\quad if and only if \quad } \bigl\{i : \fMM_i \vDash \varphi\bigl(\ov{a}[i]\bigr)\bigr\} \in \DD.
\]
\end{theorem}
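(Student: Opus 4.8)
The plan is to proceed by induction on the structure of the formula $\varphi$, following the classical argument. Before treating formulas, I would first establish the analogous statement for terms: for every $\LL$-term $t(\ov{x})$ and every tuple $\ov{a} \in \fMM^*$ with $|\ov{x}| = |\ov{a}|$, the value $t^{\fMM^*}(\ov{a})$ is represented by the coordinatewise sequence $\bigl(t^{\fMM_i}(\ov{a}[i])\bigr)_{i \in I}$. This follows by a straightforward induction on the complexity of $t$: for variables it is simply the definition of $\ov{a}[i]$, and for a compound term $f(t_0, \ldots, t_{n-1})$ it follows from the coordinatewise interpretation of the function symbol $f$ in $\prod_{i \in I} \fMM_i$ together with the induction hypothesis.

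With the term lemma in hand, the base case is that of atomic formulas. For an equality $t_1 = t_2$, the equivalence ``$\fMM^* \vDash t_1(\ov{a}) = t_2(\ov{a})$ if and only if $\{i : \fMM_i \vDash t_1(\ov{a}[i]) = t_2(\ov{a}[i])\} \in \DD$'' is exactly the definition of the equivalence relation $\approx_\DD$ applied to the term values, invoking the term lemma. For a relational atomic formula $R(t_0, \ldots, t_{n-1})$, the same equivalence follows from the coordinatewise interpretation of $R$ in the product together with the definition of $\approx_\DD$.

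For the inductive step it suffices to treat $\neg$, $\wedge$, and $\exists$, since the remaining connectives and the universal quantifier are definable from these. The conjunction case uses only that $\DD$ is closed under finite intersection and supersets, i.e.\ that it is a filter. The negation case is where the \emph{ultrafilter} hypothesis is essential: $\fMM^* \vDash \neg \psi(\ov{a})$ iff $\fMM^* \not\vDash \psi(\ov{a})$ iff (by induction) $\{i : \fMM_i \vDash \psi(\ov{a}[i])\} \notin \DD$ iff (by the defining property of an ultrafilter) the complement $\{i : \fMM_i \vDash \neg\psi(\ov{a}[i])\} \in \DD$.

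The existential quantifier case is the main obstacle, and it is where the Axiom of Choice enters. One direction is easy: if $\fMM^* \vDash \exists y\, \psi(\ov{a}, y)$, pick a witness $b \in \fMM^*$; by induction $\{i : \fMM_i \vDash \psi(\ov{a}[i], b[i])\} \in \DD$, and since this set is contained in $\{i : \fMM_i \vDash \exists y\, \psi(\ov{a}[i], y)\}$, the latter is in $\DD$ by upward closure. For the converse, suppose $J := \{i : \fMM_i \vDash \exists y\, \psi(\ov{a}[i], y)\} \in \DD$. Using AC, for each $i \in J$ choose $b[i] \in M_i$ witnessing the existential, and for $i \notin J$ let $b[i]$ be arbitrary (possible since each $M_i$ is nonempty); this defines $b \in \prod_{i \in I} M_i$. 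Then $J \subseteq \{i : \fMM_i \vDash \psi(\ov{a}[i], b[i])\}$, so by upward closure the latter set lies in $\DD$, whence by the induction hypothesis $\fMM^* \vDash \psi(\ov{a}, b)$ and therefore $\fMM^* \vDash \exists y\, \psi(\ov{a}, y)$. The only subtlety worth flagging is that the whole argument must be checked to be independent of the choice of representatives for the $\approx_\DD$-classes of the entries of $\ov{a}$; this holds because replacing a representative alters the relevant coordinate sets only on a set outside $\DD$, which does not affect membership in $\DD$.
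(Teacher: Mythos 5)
Your proposal is correct: it is the classical proof of {\L}o\'s's theorem by induction on formula complexity, with the term lemma handled first, the ultrafilter property invoked exactly where it is needed (negation), and AC invoked exactly where it is needed (the converse direction of the existential step); you also correctly flag the well-definedness issue for representatives of $\approx_\DD$-classes. Note that the paper itself does not prove this theorem at all --- it states it as a known result cited from {\L}o\'s's original paper --- so there is no internal argument to compare against; your write-up is essentially the standard textbook proof (as in Chang--Keisler) and is sound.
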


Note that in the case that $\LL$ contains only relation symbols, we can easily project substructures of the ultraproduct down to any coordinate $t \in I$:
\begin{notation}\label{project}
Let $\LL$ be a relational signature and let $\DD$ be an ultrafilter over a set $I$.  Fix a sequence of $\LL$-structures $(\fMM_i)_{i \in I}$ and let $\fMM^*:=\prod_{i \in I} \fMM_i / \DD$.
Let $\AAA$ be a substructure of $\fMM^*$. For $t \in I$, write $\bm{\AAA[t]}$ for the substructure of $\MM_t$ that has the underlying set $A[t]:=\{a[t] : a \in A\}$.
\end{notation}
\noindent Note that $\AAA[t]$ defined above does not necessarily have as universe the whole of $M_t$, that is, it is possible that $A[t]$ is a proper subset of $M_t$.

\subsubsection*{Special properties of ultrafilters and saturation}
In this section, we define properties of ultrafilters that lead to saturation of ultraproducts.  Many of these tools were used to study Keisler's order, which is a robust classification scheme for first-order theories, see \cite{Ke67}.  More recent work on Keisler's order can be found in \cite{memSh}, \cite{AckKark} and \cite{Ulrich}.  This section closely follows the exposition in Chapter 6 of \cite{C-K}.   

\begin{definition} An ultrafilter $\DD$ is \emph{countably incomplete} if $\DD$ is not closed under countable intersections. Equivalently, there exists a sequence 
$(X_n)_{n\in\omega}$ of sets in $\DD$ whose intersection is empty. We say that $(X_n)_{n\in\omega}$ {\em witnesses} that $\DD$ is countably incomplete.
\end{definition}

\begin{remark}\label{nonp_ctbl}  Any nonprincipal ultrafilter $\DD$ over $I= \omega$ is countably incomplete.  To see this, define $X_n := \{i \in \omega : i > n\}$ for all $n \in \omega$.  By assumption, $X_n \in \DD$ (otherwise its complement $n$ is in $\DD$, thus making $\DD$ principal).  Since $\bigcap_{n \in \omega} X_n = \emptyset \notin \DD$, the sets $(X_n)_{n \in \omega}$ witness that $\DD$ is countably incomplete.
\end{remark}

In \cite{C-K},  $\kappa$-good ultrafilters are defined, for a cardinal $\kappa$.   Since the definition is quite technical, let us merely note the following:

\begin{theorem}[Theorem 6.1.4 of \cite{C-K}]\label{general_exists} For any set $I$ of cardinality $\kappa$ there exists a $\kappa^+$-good countably incomplete ultrafilter $\DD$ over $I$.
\end{theorem}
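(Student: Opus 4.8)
The plan is to build $\DD$ by a transfinite recursion of length $2^\kappa$, obtaining it as the union of an increasing chain of proper filters $(\DD_\xi : \xi \le 2^\kappa)$ over $I$, while simultaneously arranging that the terminal filter is an ultrafilter and that it is $\kappa^+$-good. Recall that $\DD$ is $\kappa^+$-good exactly when every \emph{monotone} $f : [\gamma]^{<\omega} \to \DD$ (meaning $s \subseteq t \Rightarrow f(t) \subseteq f(s)$) with $\gamma \le \kappa$ admits a \emph{multiplicative refinement}: a map $g : [\gamma]^{<\omega} \to \DD$ with $g(s) \subseteq f(s)$ and $g(s \cup t) = g(s) \cap g(t)$. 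Countable incompleteness is cheap: I would start from $\DD_0$ an extension of the Fr\'echet filter, so that by Remark~\ref{nonp_ctbl} every filter in the chain, and hence $\DD$, stays countably incomplete. Thus the real content lies entirely in meeting the goodness requirements.

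The engine of the construction is a large \emph{independent family of functions}. Since $|I| = \kappa$, there is a family $\EE_0 = \{h_\alpha : \alpha < 2^\kappa\}$ of functions $h_\alpha : I \to \kappa$ that is independent modulo $\DD_0$, in the sense that for distinct $h_{\alpha_1}, \ldots, h_{\alpha_n}$, any $\beta_1, \ldots, \beta_n < \kappa$, and any $Z \in \DD_0$, the set $Z \cap \{t : h_{\alpha_1}(t) = \beta_1, \ldots, h_{\alpha_n}(t) = \beta_n\}$ is nonempty (indeed of size $\kappa$). I would carry along the invariant that at stage $\xi$ a subfamily $\EE_\xi \subseteq \EE_0$ is independent modulo $\DD_\xi$, with only few functions spent: $|\EE_0 \setminus \EE_\xi| \le |\xi| + \omega < 2^\kappa$. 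The recursion enumerates, in order type $2^\kappa$, all subsets $X \subseteq I$ (to force an ultrafilter) together with all monotone $f : [\gamma]^{<\omega} \to \mathcal{P}(I)$ with $\gamma \le \kappa$ (to force goodness); there are exactly $2^\kappa$ of each.

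The crucial step is a goodness step. Given a monotone $f$ whose values happen to lie in $\DD_\xi$, I would take an unused $h \in \EE_\xi$, recode it so that $h : I \to [\gamma]^{<\omega}$ still has every fibre $\DD_\xi$-positive, set $A := \{t : t \in f(h(t))\}$, and define
\[
 g(s) := A \cap \{t : s \subseteq h(t)\}.
\]
For $t \in g(s)$ we have $s \subseteq h(t)$ and $t \in f(h(t))$, so $t \in f(s)$ by monotonicity; hence $g(s) \subseteq f(s)$, and moreover
\[
 g(s) \cap g(s') = A \cap \{t : s \cup s' \subseteq h(t)\} = g(s \cup s'),
\]
so $g$ is a multiplicative refinement of $f$. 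I would then let $\DD_{\xi+1}$ be the filter generated by $\DD_\xi \cup \{A\} \cup \{\{t : \alpha \in h(t)\} : \alpha < \gamma\}$, which puts every $g(s)$ into $\DD_{\xi+1}$ (since $\{t : s \subseteq h(t)\}$ is a finite intersection of generators), and drop $h$ to form $\EE_{\xi+1} := \EE_\xi \setminus \{h\}$. Both the properness of $\DD_{\xi+1}$ and the independence of $\EE_{\xi+1}$ modulo it follow from the independence of $h$ from the surviving functions: for $Z \in \DD_\xi$, finite $s$, and a pattern $P$ over $\EE_{\xi+1}$, choosing any finite $u \supseteq s$ and intersecting with the positive set $\{t : h(t) = u\}$ reduces the nonemptiness of $Z \cap A \cap \{t : s \subseteq h(t)\} \cap P$ to the independence of $\{h\} \cup (\text{functions in } P)$ modulo $\DD_\xi$ applied to $Z \cap f(u) \in \DD_\xi$.

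For an ultrafilter step I would invoke the standard decidability lemma for independent families: for any $X \subseteq I$, after discarding the finitely many functions occurring in the patterns that witness, on either side, an inclusion $Z \cap P \subseteq X$ or $Z \cap P \subseteq I \setminus X$ (with $Z \in \DD_\xi$ and $P$ a finite value-pattern), the surviving family is independent modulo $\DD_\xi$ extended by $X$ or by $I \setminus X$; I add whichever side keeps the filter proper. At limit stages I take unions, which preserve both properness and independence modulo the union. The main obstacle is exactly this bookkeeping: one must verify that independence of the surviving family is preserved modulo the steadily growing filter at each of the $2^\kappa$ steps, and that the count $|\EE_0 \setminus \EE_\xi| \le |\xi| + \omega < 2^\kappa$ always leaves an unused function available at any stage $\xi < 2^\kappa$ where a goodness requirement is met. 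Since every subset of $I$ is eventually decided, $\DD := \DD_{2^\kappa}$ is an ultrafilter; it is countably incomplete as it extends $\DD_0$; and every monotone $f : [\gamma]^{<\omega} \to \DD$ with $\gamma \le \kappa$ was handled at some stage, so $\DD$ is $\kappa^+$-good.
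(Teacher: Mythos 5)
Your proposal reconstructs, essentially verbatim, the proof of Theorem 6.1.4 given in \cite{C-K} (Kunen's independent-families construction) --- which is apt, since the paper itself only quotes this theorem from \cite{C-K} and gives no proof. The heart of your argument is right: the goodness step (recode an unused independent $h$ to take values in $[\gamma]^{<\omega}$, set $A := \{t : t \in f(h(t))\}$, $g(s) := A \cap \{t : s \subseteq h(t)\}$, verify multiplicativity and the preservation of independence by restricting to a single fibre $\{t : h(t) = u\}$), the consistency/decidability dichotomy for the ultrafilter steps, and the cardinality bookkeeping for the spent functions are all exactly as in the cited source.

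There are, however, two genuine gaps. First, your treatment of countable incompleteness fails for uncountable $\kappa$. Remark \ref{nonp_ctbl} applies only to $I = \omega$, and for uncountable $I$ the implication ``extends the Fr\'echet filter, hence countably incomplete'' is false: countable incompleteness is not inherited upward by extensions (the co-countable filter on $\omega_1$ extends the Fr\'echet filter and \emph{is} countably complete), and in ZFC one cannot exclude countably complete nonprincipal ultrafilters on uncountable sets (this is exactly the measurable-cardinal situation). Goodness does not rescue you either, since a $\kappa^+$-complete ultrafilter is automatically $\kappa^+$-good, so nothing in your construction contradicts countable completeness. The standard fix, and the one in \cite{C-K}, is to spend one member $h$ of the independent family at the very start: recode it as $h : I \to \omega$, put the decreasing sets $X_n := \{t : h(t) \geq n\}$ (whose intersection is empty) into $\DD_0$, and discard $h$; every proper filter extending $\DD_0$ is then countably incomplete, and independence of the remaining family modulo $\DD_0$ follows from independence of the original family. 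Second, your bookkeeping handles each monotone $f$ only once, at its enumerated stage, and only ``if its values happen to lie in $\DD_\xi$''; but $f$ may be enumerated before its values have entered the chain, in which case it is skipped forever, and your final claim that every monotone $f : [\gamma]^{<\omega} \to \DD$ ``was handled at some stage'' does not follow. The repair is standard: enumerate so that each candidate appears cofinally often in $2^\kappa$, and note that since $\mathrm{cf}(2^\kappa) > \kappa$ (K\"onig's theorem) and $f$ has at most $\kappa$ values, all of them lie in some $\DD_\xi$ with $\xi < 2^\kappa$, after which $f$ is considered again and the goodness step applies.
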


\begin{theorem}[Theorem 6.1.8 \cite{C-K}]\label{maximal_saturation} Let $\kappa$ be an infinite cardinal and let $\DD$ be a countably incomplete $\kappa$-good ultrafilter over a set $I$.  Suppose $|\LL|<\kappa$.  Then for any family of $\LL$-structures $(\fAA_i)_{i \in I}$, $\prod_{i \in I} \fAA_i / \DD$ is $\kappa$-saturated.
\end{theorem}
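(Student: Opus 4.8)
The plan is to verify Definition~\ref{kappasat} directly: I would show that every $1$-type $p(x)$ over the ultraproduct $\fMM^\ast := \prod_{i\in I}\fAA_i/\DD$ with $|\Domp(p)|<\kappa$ is realized. First I would reduce to a small enumeration. Since every formula in $p$ uses only finitely many parameters, and the number of $\LL$-formulas with parameters from a set of size $<\kappa$ is at most $\max(|\LL|,|\Domp(p)|,\aleph_0)<\kappa$ (this is where the hypothesis $|\LL|<\kappa$ enters), the type $p$ has fewer than $\kappa$ members; write $p=\{\varphi_\xi(x):\xi<\lambda\}$ with $\lambda<\kappa$. Finite satisfiability of $p$ in $\fMM^\ast$ means that for each finite $s\subseteq\lambda$ we have $\fMM^\ast\vDash\exists x\,\bigwedge_{\xi\in s}\varphi_\xi(x)$, so by \los's Theorem (Theorem~\ref{los}) the set $\{i\in I:\fAA_i\vDash\exists x\,\bigwedge_{\xi\in s}\varphi_\xi[i](x)\}$ belongs to $\DD$, where $\varphi_\xi[i]$ denotes the formula obtained by replacing each parameter by its value at coordinate $i$.

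Next I would package this data into a single monotone map and invoke goodness, using countable incompleteness to tame it coordinatewise. By countable incompleteness of $\DD$ (in its standard reformulation) fix a decreasing chain $I=I_0\supseteq I_1\supseteq\cdots$ with each $I_n\in\DD$ and $\bigcap_n I_n=\emptyset$, and define $f:[\lambda]^{<\omega}\to\DD$ by
\[
f(s):=I_{|s|}\cap\bigl\{i\in I:\fAA_i\vDash\exists x\,\bigwedge_{\xi\in s}\varphi_\xi[i](x)\bigr\}.
\]
This $f$ is $\DD$-valued and monotone ($s\subseteq t$ gives $f(t)\subseteq f(s)$), so $\kappa$-goodness of $\DD$---recall that this means every monotone map on the finite subsets of an ordinal $<\kappa$ into $\DD$ admits a multiplicative refinement---produces $g:[\lambda]^{<\omega}\to\DD$ with $g(s)\subseteq f(s)$ and $g(s\cup t)=g(s)\cap g(t)$. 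The factor $I_{|s|}$ now does the essential work: for each fixed $i\in I$ the set $s_i:=\{\xi<\lambda:i\in g(\{\xi\})\}$ is finite. Indeed, for any finite $u\subseteq s_i$ multiplicativity gives $i\in\bigcap_{\xi\in u}g(\{\xi\})=g(u)\subseteq f(u)\subseteq I_{|u|}$; since $\bigcap_n I_n=\emptyset$ there is a least $n_i$ with $i\notin I_{n_i}$, forcing $|u|<n_i$, hence $|s_i|\le n_i$.

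Finally I would build and check the realizing element. Since $s_i$ is finite and $i\in g(s_i)\subseteq f(s_i)$, the structure $\fAA_i$ satisfies $\exists x\,\bigwedge_{\xi\in s_i}\varphi_\xi[i](x)$; choose $a[i]\in A_i$ witnessing this (arbitrary if $s_i=\emptyset$) and set $a:=(a[i])_{i\in I}/\DD$. To see that $a$ realizes $p$, fix $\xi<\lambda$: whenever $i\in g(\{\xi\})$ we have $\xi\in s_i$, so $a[i]$ satisfies $\varphi_\xi[i]$ at coordinate $i$; thus $g(\{\xi\})\subseteq\{i:\fAA_i\vDash\varphi_\xi[i](a[i])\}$, and as $g(\{\xi\})\in\DD$ this set lies in $\DD$, giving $\fMM^\ast\vDash\varphi_\xi(a)$ by \los's Theorem. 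Since $\xi$ was arbitrary, $a$ realizes $p$, completing the proof. I expect the main obstacle to be exactly the coordinatewise coherence: the separate witnesses $a[i]$ must be chosen so that each $\varphi_\xi$ is satisfied on a $\DD$-large set simultaneously for all $\xi$, and it is the multiplicativity granted by $\kappa$-goodness---combined with the finiteness of the $s_i$ engineered via countable incompleteness---that reconciles these local choices; the remaining steps are routine applications of \los's Theorem.
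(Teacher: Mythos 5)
The paper does not actually prove this statement---it imports it verbatim as Theorem 6.1.8 of \cite{C-K}---and your argument is precisely the standard proof from that source: enumerate the type as $\{\varphi_\xi:\xi<\lambda\}$ with $\lambda<\kappa$, use \los's Theorem to see each finite conjunction is satisfiable on a $\DD$-large set, intersect with a decreasing chain $(I_n)$ witnessing countable incompleteness to get the monotone map $f(s)=I_{|s|}\cap A_s$, pass to a multiplicative refinement $g$ by $\kappa$-goodness, and exploit the resulting finiteness of $s_i=\{\xi:i\in g(\{\xi\})\}$ to choose coordinatewise witnesses; every step checks out. The only boundary caveat is your cardinality count $\max(|\LL|,|\Domp(p)|,\aleph_0)<\kappa$, which fails when $\kappa=\aleph_0$ (a $1$-type over a finite parameter set can already be countably infinite, and then $\kappa$-goodness gives nothing for $\lambda=\aleph_0$); this matches the convention in \cite{C-K}, where the ``power'' of a language is at least $\aleph_0$ so the hypothesis tacitly forces $\kappa$ uncountable, and the degenerate $\kappa=\aleph_0$ instance follows separately from the $\aleph_1$-saturation of countably incomplete ultraproducts, which needs only the chain $(I_n)$ and not goodness.
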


\begin{corollary}\label{minimal_saturation} Suppose $\LL$ is countable and $\DD$ is a nonprincipal ultrafilter over $\omega$.  
Then for any family $(\fAA_i)_{i \in \omega}$ of $\LL$-structures, $\prod_{i \in \omega} \fAA_i / \DD$ is $\aleph_1$-saturated.
\end{corollary}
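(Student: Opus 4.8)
The plan is to obtain the corollary as the instance $\kappa = \aleph_1$ of Theorem \ref{maximal_saturation}, applied over the index set $I = \omega$. To invoke that theorem I must verify its two hypotheses for $\kappa = \aleph_1$: first, that $|\LL| < \aleph_1$; and second, that $\DD$ is a countably incomplete $\aleph_1$-good ultrafilter over $\omega$.

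The first hypothesis is immediate, since $\LL$ is countable gives $|\LL| \leq \aleph_0 < \aleph_1$. Countable incompleteness of $\DD$ is also immediate: by Remark \ref{nonp_ctbl}, every nonprincipal ultrafilter over $\omega$ is countably incomplete.

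The only substantive point, and the step I expect to be the main obstacle, is establishing that $\DD$ is $\aleph_1$-good. This is the classical fact that every countably incomplete ultrafilter is automatically $\aleph_1$-good; it is exactly what makes $\aleph_1$-saturation of ultraproducts over $\omega$ come for free, without the appeal to Theorem \ref{general_exists} that strictly higher degrees of saturation require. To prove it, I would fix the decreasing sequence $\omega = I_0 \supseteq I_1 \supseteq \cdots$ with $I_n \in \DD$ and $\bigcap_{n} I_n = \emptyset$ supplied by countable incompleteness, and then, given a monotonic $f$ from the finite subsets of some $\lambda < \aleph_1$ into $\DD$ (so $\lambda$ is countable and may be identified with $\omega$), construct a multiplicative refinement $g$. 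The device is that for each coordinate $t \in \omega$ there is a unique $n$ with $t \in I_n \setminus I_{n+1}$, so declaring only the finitely many indices below that level to be \emph{active} at $t$ collapses each instance of the refinement problem to a finite intersection, where multiplicativity can be arranged directly. This localization is precisely where the restriction to countable $\lambda$, equivalently to $\kappa = \aleph_1$, is essential.

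With $\aleph_1$-goodness in hand, Theorem \ref{maximal_saturation} applies verbatim and yields that $\prod_{i \in \omega} \fAA_i / \DD$ is $\aleph_1$-saturated, completing the proof. I note that if one is content to quote Chang--Keisler for the $\aleph_1$-goodness of countably incomplete ultrafilters, the argument reduces to assembling Remark \ref{nonp_ctbl}, the cardinality bound $|\LL| \leq \aleph_0$, and Theorem \ref{maximal_saturation}, with no genuine obstacle remaining.
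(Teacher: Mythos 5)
Your assembly of the corollary coincides with the paper's own proof: the paper verifies the hypotheses of Theorem \ref{maximal_saturation} at $\kappa=\aleph_1$ by quoting Remark \ref{nonp_ctbl} for countable incompleteness and quoting Claim 2.1 in Ch.~VI of \cite{Sh-c} --- \emph{every} filter is $\aleph_1$-good --- for goodness. So the fallback version of your argument, in which you simply cite Chang--Keisler for $\aleph_1$-goodness, is exactly the paper's proof and is correct; note also that the fact being cited is stronger than the one you state, since no countable incompleteness is needed for $\aleph_1$-goodness at all (it enters the corollary only as a separate hypothesis of Theorem \ref{maximal_saturation}, exactly as you use it via Remark \ref{nonp_ctbl}).

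The step that would fail is your sketched first-principles proof of $\aleph_1$-goodness. The per-coordinate localization device --- declaring, at each coordinate $t$, only the indices below the level $n(t)$ with $t\in I_{n(t)}\setminus I_{n(t)+1}$ to be active --- cannot simultaneously deliver refinement, multiplicativity, and membership in $\DD$. If $g(u)$ imposes only the constraints $f(s)$ for $s\subseteq u\cap\{0,\dots,n(t)\}$, then refinement fails: every coordinate $t$ with $n(t)<\max u$ enters $g(u)$ vacuously, so $g(u)\not\subseteq f(u)$ in general. If instead $g(u)$ requires $u\subseteq\{0,\dots,n(t)\}$ together with all constraints $f(s)$, $s\subseteq\{0,\dots,n(t)\}$, then $g(u)$ need not lie in $\DD$: take $I_n=[n,\omega)$, so $n(t)=t$, and the monotone map $f(s):=\{t\in\omega : t\geq 2^{\max s}\}$ (with $f(\emptyset)=\omega$); no coordinate $t$ satisfies even the single constraint $f(\{t\})$ indexed by its own level, since $t<2^t$, so this $g(u)$ is empty. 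The correct device is indexed by $u$, not by the coordinate $t$: identify $\lambda$ with $\omega$ and set $g(u):=\bigcap\bigl\{f(s) : s\subseteq\{0,1,\dots,\max u\}\bigr\}$. This is a finite intersection of members of $\DD$, it refines $f$ because $u$ is one of the sets $s$, and it is multiplicative because it depends only on $\max u$ and is decreasing in $\max u$. This argument uses only the countability of $\lambda$ and never touches the sequence $(I_n)$ --- which is precisely why the cited fact holds for arbitrary filters.
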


\begin{proof}  Claim 2.1 in Ch.~VI of \cite{Sh-c} states that every filter is $\aleph_1$-good,  thus, any ultrafilter $\DD$ over $I=\omega$ is $\aleph_1$-good, and if it is nonprincipal, it is also countably incomplete, by Remark \ref{nonp_ctbl}.  The rest follows by Theorem \ref{maximal_saturation} and setting $\kappa:=\aleph_1$.
$\eop_{\ref{minimal_saturation}}$
\end{proof}

\subsubsection*{Pseudofinite structures and cardinality}

We shall mostly be interested in the ultraproducts of finite structures, so we shall now pass to that case. Structures elementarily equivalent to substructures of such ultraproducts are called {\em pseudofinite}. In what follows, we shall assume that $\fMM^*:=\prod_{i \in I} \MM_i / \DD$ for some ultrafilter $\DD$ over some infinite set $I$ and some finite structures $(\MM_i)_{i\in I}$.

If $I = \omega$, clearly $|\fMM^*| \leq \mathfrak c$ as the size of the (non-reduced) product of finite sets is bounded above by $\mathfrak c$.
It is not at all an easy question to determine the size of the ultraproducts of finite sets (see \cite{Sh:1026} for a history and the definitive theorem in the subject, Theorem 1.2.).  However, under certain frequently encountered assumptions, the size of the ultraproduct $\fMM^*$ is exactly the continuum $\mathfrak c$:

\begin{theorem}[\cite{FMScott}]\label{sizec}
Let $\LL$ be a signature, $\DD$ a nonprincipal ultrafilter over $\omega$ and $(\MM_i)_{i\in \omega}$ a sequence of finite $\LL$-structures such that the ultraproduct $\fMM^*:= \prod_{i \in \omega} \MM_i / \DD$ is infinite. Then $\fMM^*$ has cardinality $2^{\aleph_0}$. 
\end{theorem}

Theorem 2.13 in Chapter VI of \cite{Sh-c}
states that for any $|I|^+$-good ultrafilter over $I$,  for any integers $(n_i)_{i \in I}$, if $|\prod_{i \in I} n_i / \DD| \geq \aleph_0$, then $|\prod_{i \in I} n_i / \DD| \geq 2^{|I|}$.  On the other hand, a product $\prod_{i \in I} n_i$ of finite sets has cardinality at most $2^{|I|}$, and the cardinality of the reduced product is bounded by that size.  
This gives us the general case:

\begin{fact}\label{sizegc}
Let $\LL$ be a signature, $\DD$ a $\kappa^+$-good countably incomplete ultrafilter over $\kappa$ and $(\MM_i)_{i\in \kappa}$ a sequence of finite $\LL$-structures such that the ultraproduct $\fMM^* := \prod_{i \in \kappa} \MM_i / \DD$ is infinite.  Then  $\fMM^*$ has cardinality $2^{\kappa}$. 
\end{fact}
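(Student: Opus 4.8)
The plan is to establish Fact \ref{sizegc} by sandwiching the cardinality of $\fMM^\ast$ between $2^\kappa$ (from below) and $2^\kappa$ (from above), using exactly the two ingredients quoted in the surrounding text. First I would observe that the upper bound is elementary: since each $\MM_t$ is finite, its underlying set $M_t$ has some finite cardinality $n_t$, and the full Cartesian product $\prod_{t \in \kappa} M_t$ injects into $\prod_{t \in \kappa} n_t$, whose cardinality is at most $(\aleph_0)^\kappa = 2^\kappa$ (indeed $\prod_{t} n_t \leq \aleph_0^{\,\kappa} = 2^\kappa$, since each $n_t \leq \aleph_0$). Passing to the quotient by $\approx_\DD$ only decreases cardinality, so $|\fMM^\ast| \leq 2^\kappa$.

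For the lower bound I would invoke the cited Theorem 2.13 of Ch.~VI of \cite{Sh-c}, which applies precisely because $\DD$ is assumed $\kappa^+$-good over a set of size $\kappa$. Writing $n_t := |M_t|$, the structure $\fMM^\ast$ has the same underlying set as the reduced product $\prod_{t \in \kappa} n_t / \DD$ of the finite cardinals, and by hypothesis $\fMM^\ast$ is infinite, so $\prod_{t \in \kappa} n_t / \DD \geq \aleph_0$. The quoted theorem then yields $\prod_{t \in \kappa} n_t / \DD \geq 2^\kappa$, i.e.\ $|\fMM^\ast| \geq 2^\kappa$. Combining the two bounds gives $|\fMM^\ast| = 2^\kappa$, as desired.

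The only subtlety I anticipate is making sure the hypotheses of the external theorem line up: the theorem requires $|I|^+$-goodness, and here $I = \kappa$, so we need $\kappa^+$-goodness, which is exactly what the statement assumes (and its existence is guaranteed by Theorem \ref{general_exists}). I would also want to confirm that the countable incompleteness hypothesis is what forces the infinitude condition to be usable — in fact countable incompleteness is what makes an infinite reduced product of finite sets genuinely infinite rather than a single finite value, matching Remark \ref{nonp_ctbl} in the $\kappa = \omega$ case. Since both the upper-bound computation and the lower-bound citation are essentially immediate once the cardinal arithmetic $\aleph_0^{\,\kappa} = 2^\kappa$ is recorded, I expect no genuine obstacle; the entire argument is a short two-inequality sandwich, parallel in structure to the proof of Fact \ref{sizec} but with $2^\kappa$ replacing $\mathfrak{c}$ throughout.
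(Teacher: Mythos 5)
Your proof is correct and takes essentially the same approach as the paper: the upper bound $|\fMM^*| \leq 2^{\kappa}$ from the cardinality of a product of finite sets, and the lower bound from Theorem 2.13 in Ch.~VI of \cite{Sh-c}, whose $\kappa^+$-goodness hypothesis is exactly what the Fact assumes. The paper presents precisely this two-inequality sandwich in the paragraph preceding the statement, so there is nothing to add.
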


\subsubsection*{Finite substructures of ultraproducts}

To define ``internal'' colorings in the next section, we need to first develop notation for defining finite substructures of an ultraproduct.

For the rest of this subsection, fix a finite relational signature $\LL$, a nonprincipal ultrafilter $\DD$ over $\omega$, and a sequence $(\MM_i)_{i \in \omega}$ of finite (nonempty) $\LL$-structures. For each $i \in \omega$, write $M_i$ for the  underlying set of $\MM_i$. Write $\fMM^* := \prod_{i \in \omega} \MM_i \, / \,  \DD$. 

Let $<_\omega$ denote the usual order on $\omega$. We may assume without loss of generality that each $\MM_i$ has some positive integer as its underlying set $M_i$. Let $<_i$ denote the restriction of $<_\omega$ to $M_i$. The ultraproduct $\fMM^*$ has a natural linear order $\prec$  obtained from the finite linear orders $<_i$, given by: for $a, b \in \fMM^*$,
\[
a \prec b \textrm{ \quad if and only if \quad } \bigl\{i: a[i] <_i b[i] \bigr\} \in \DD.
\]

Let $\fNN$ be an $\LL$-structure, $\AAA$ a finite substructure of $\fNN$, and $n := |A|$. We say that a tuple $\ov{a}$ from $N$ with $|\ov{a}| = n$ is an \emph{enumeration of $\AAA$ in $\fNN$} if $\ran(\ov{a})=A$. 

Suppose $<_N$ is some linear order on $N$ (not necessarily interpreting a relation symbol in $\LL$). An enumeration $\ov{a}$ of $\AAA$ in $\fNN$ is the \emph{increasing enumeration of $\AAA$ in $\fNN$ with respect to $<_N$} if, 
for all $i,j \in n$, we have $a_i <_N a_j$ precisely when $i\in j$. When $\fNN = \fMM^*$ or when $\fNN = \MM_i$ for some $i \in \omega$, we assume that $<_N$ is $\prec$ or $<_i$, respectively, and suppress mention of the order on $N$.

\begin{definition}\label{thetaA}
Let $\fNN$ and $<_N$ be as above. Given a finite substructure $\AAA$ of $\fNN$, write $n := |A|$ and let $\ov{a}$ be the increasing enumeration of $\AAA$ in $\fNN$ with respect to $<_N$. We shall define {\em the quantifier-free type of $\ov{a}$ in $\fNN$}: it is the unique up to logical equivalence quantifier-free $\LL$-formula $\theta_\AAA(\ov{x})$, where $|\ov{x}| = n$, satisfying:
\begin{itemize}
    \item 
    $\fNN \vDash \theta_\AAA(\ov{a})$, and
    \item\label{theta2}
    for any $\LL$-structure $\fNN'$, any substructure $\BB$ of $\fNN'$ such that $|B| = n$, and any enumeration $\ov{b}$ of $B$, if $\fNN' \vDash \theta_\AAA(\ov{b})$ then the map sending $a_i$ to $b_i$ for each $i\in n$ is an $\LL$-isomorphism.
\end{itemize}
\end{definition}

\noindent Such a (first-order) formula $\theta_\AAA(\ov{x})$ exists because $\LL$ is finite and relational. 

\begin{remark}\label{strongerfortheta}
In fact, a stronger property holds of $\theta_\AAA(\ov{x})$ than required by the definition above: for any $\LL$-structure $\fNN'$ and finite substructure $\BB$ of $\fNN'$, we have that $\BB \cong \AAA$ if and only if there is some enumeration $\ov{b}$ of $\BB$ in $\fNN'$ such that $\fNN' \vDash \theta_{\AAA}(\ov{b})$.
\end{remark}

For the following, refer to Definition \ref{project} for the definition of $\AAA[i]$:

\begin{observation}\label{whendefined}
For any finite substructure $\AAA$ of $\fMM^*$, we have
\[
    \{i : \AAA[i] \cong \AAA\} \in \DD.  
\]
\end{observation}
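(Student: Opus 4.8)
The plan is to reduce the statement to \los's Theorem \ref{los} by exploiting the quantifier-free formula $\theta_\AAA$ from Definition \ref{thetaA}. Write $n := |A|$ and let $\ov{a} = (a_0, \ldots, a_{n-1})$ be the increasing enumeration of $\AAA$ in $\fMM^*$ with respect to $\prec$, so that $\ran(\ov{a}) = A$ and $\fMM^* \vDash \theta_\AAA(\ov{a})$. Since $\theta_\AAA$ is a first-order $\LL$-formula, \los's Theorem immediately gives that the set $D_1 := \{t : \MM_t \vDash \theta_\AAA(\ov{a}[t])\}$ belongs to $\DD$, where $\ov{a}[t] = (a_0[t], \ldots, a_{n-1}[t])$.

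The one gap is that the defining property of $\theta_\AAA$ (the second clause of Definition \ref{thetaA}) applies only when the witnessing tuple genuinely enumerates an $n$-element substructure; a priori the coordinates $a_i[t]$ could coincide for some $t$. To handle this, I would observe that the coordinates of $\ov{a}$ are pairwise distinct in $\fMM^*$ (as $\ov a$ has length $n=|A|$ and range $A$), so for each pair $i < j$ we have $a_i \not\approx_\DD a_j$, and since $\DD$ is an ultrafilter the set $\{t : a_i[t] \neq a_j[t]\}$ lies in $\DD$. As there are only finitely many such pairs, their intersection $D_2$ is again in $\DD$, and for every $t \in D_2$ the tuple $\ov{a}[t]$ has $n$ distinct coordinates. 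Hence, for $t \in D_2$, the set $A[t] = \ran(\ov{a}[t])$ has exactly $n$ elements and $\ov{a}[t]$ is an enumeration of the substructure $\AAA[t]$ of $\MM_t$ in the sense of Definition \ref{project}.

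Finally, I would intersect: for every $t \in D_1 \cap D_2$ the tuple $\ov{a}[t]$ is an enumeration of the $n$-element substructure $\AAA[t]$ and $\MM_t \vDash \theta_\AAA(\ov{a}[t])$, so by the defining property of $\theta_\AAA$ (applied with $\fNN' = \MM_t$, $\BB = \AAA[t]$, and witnessing tuple $\ov{a}[t]$) the map $a_i \mapsto a_i[t]$ is an $\LL$-isomorphism from $\AAA$ onto $\AAA[t]$; in particular $\AAA[t] \cong \AAA$. Thus $D_1 \cap D_2 \subseteq \{t : \AAA[t] \cong \AAA\}$, and since $D_1 \cap D_2 \in \DD$ and $\DD$ is upward closed, the desired set belongs to $\DD$. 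The main point to get right is the distinctness step: it is exactly what guarantees that the hypothesis $|B| = n$ of Definition \ref{thetaA} is met coordinatewise, without which the isomorphism clause could not be invoked; everything else is a direct transcription through \los's Theorem.
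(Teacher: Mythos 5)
Your proposal is correct and takes essentially the same route as the paper: apply \los's Theorem to $\theta_\AAA$ at the increasing enumeration $\ov{a}$, then conclude coordinatewise from the defining property of $\theta_\AAA$. The only difference is your explicit set $D_2$ forcing the coordinates $a_i[t]$ to be distinct; the paper absorbs this point into Remark \ref{strongerfortheta} (equivalently, into the fact that $\theta_\AAA$, being the full quantifier-free diagram of $\ov{a}$, already contains the inequalities $x_i \neq x_j$, so distinctness holds automatically on your $D_1$), and your version simply makes that implicit step explicit, which is a valid and arguably more careful rendering of the same argument.
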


\begin{proof} 
Let $\ov{a}$ be the increasing enumeration of $\AAA$ in $\fMM^*$. Then $\fMM^* \vDash \theta_\AAA(\ov{a})$, and so by  \los's Theorem, we have $\{i: \MM_i \vDash \theta_\AAA(\ov{a}[i]) \} \in \DD$. Since, for each $i$, the tuple $\ov{a}[i]$ is an enumeration of $\AAA[i]$ in $\MM_i$, by Remark \ref{strongerfortheta} we have that 
$ \{i : \MM_i \vDash \theta_\AAA(\ov{a}[i])\} \subseteq \{i : \AAA[i] \cong \AAA\}$. Hence $\{i : \AAA[i] \cong \AAA\} \in \DD$.
$\eop_{\ref{whendefined}}$
\end{proof}

\section{Partition properties and internal colorings}\label{intercolo}

In this section, we introduce some notions from structural Ramsey theory, as well as a special type of coloring on an ultraproduct called an {\em internal coloring}.

\subsection{Small and big Ramsey degrees}\label{Ramseyprops}

For the rest of this subsection, fix a signature $\LL$. Recall Notation \ref{structureRamsey} and Definition \ref{smallRamseydeg} from the Introduction, where we have defined Ramsey objects and Ramsey degrees. Let us take a few sentences to give a more detailed history of these notions.

We can understand Finite Ramsey Theorem \cite{Ramseyth},
as stating that any finite set (in the empty signature) has the Ramsey property in the class of finite sets.
Abramson and Harrington in \cite{AbramsonHarrington} and independently Ne\v{s}et\v{r}il and R\"odl in \cite{NesetrilRodl} showed that any finite ordered graph is a Ramsey object in the class of finite ordered graphs. However,
only complete and empty graphs are Ramsey objects in the class of all finite graphs (see \cite{NesetrilRodlcomplete} for a general proof and history of special cases). 
The notion of small Ramsey degree was introduced by Fouch\'e in \cite{FOUCHE1997309}
as a parameter measuring how far a finite structure is from having the Ramsey property in a given class.
Ne\v{s}et\v{r}il and R\"odl showed that finite graphs satisfy the Ramsey property for colorings of vertices in \cite{NesetrilRodlvertex}, but not every graph is a Ramsey object in the class of all graphs. However, it follows from the fact that finite ordered graphs are a Ramsey class that every graph has
finite small Ramsey degree in the class of all graphs.

Going to the infinite structures, as mentioned in the Introduction
the example of Sierpi\'{n}ski in \cite{MR1556708}
shows that for $\AAA$ the 2-element linear order, the partition relation $\mathbb{Q} \lraw (\mathbb{Q})^\AAA_{2,1}$ fails. Galvin (unpublished)
later showed that for any $k \in \omega$, the relation $\mathbb{Q} \lraw (\mathbb{Q})^\AAA_{k,2}$ holds. This and similar phenomena were the motivation for the formulation in \cite{KPT}
of the notion of the big Ramsey degree of an infinite structure.

As for the relation between the small and the big Ramsey degrees, it is not difficult to see using a compactness argument (see e.g. the proof of Proposition 3 in \cite{NguyenVanThe}), that the following holds:

\begin{observation}\label{big-implies-small}
Let $\fNN$ be an infinite, locally finite $\LL$-structure and $\AAA$ a finite $\LL$-structure. Suppose $\AAA$ has finite big Ramsey degree in $\fNN$. Then $\AAA$ has finite small Ramsey degree in $\age(\fNN)$, and $t(\AAA, \age(\fNN)) \le T(\AAA, \fNN)$.
\end{observation}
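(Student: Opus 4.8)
The plan is to deduce the inequality by a standard compactness argument, transferring a hypothetical ``bad'' coloring on finite substructures of $\fNN$ to a single bad coloring on all of $\fNN$. Write $\ell := T(\AAA, \fNN)$, which is finite by hypothesis, so that $\fNN \lraw (\fNN)^\AAA_{k,\ell}$ holds for every $k \in \omega$. Since $\fNN$ is locally finite, every finitely generated substructure of $\fNN$ is finite; in particular every member of $\age(\fNN)$ is (isomorphic to) a finite substructure of $\fNN$, and $\age(\fNN)$ is a legitimate class of finite structures to which Definition \ref{smallRamseydeg} applies. It therefore suffices to show that for each $k \in \omega$ and each $\BB \in \age(\fNN)$ there is a finite substructure $\CC$ of $\fNN$ with $\CC \lraw (\BB)^\AAA_{k,\ell}$. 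I would fix such $k$ and $\BB$ and argue by contradiction, assuming that $\CC \not\lraw (\BB)^\AAA_{k,\ell}$ for every finite substructure $\CC$ of $\fNN$.

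For the compactness step, consider the product space $X := k^{\binom{\fNN}{\AAA}}$, where $k$ carries the discrete topology; $X$ is compact by Tychonoff's theorem. For each finite substructure $\CC$ of $\fNN$, let $F_\CC \subseteq X$ be the set of those $c$ for which no copy $\BB'$ of $\BB$ with $\BB' \subseteq \CC$ has $\binom{\BB'}{\AAA}$ $\ell$-chromatic under $c$. Because $\CC$ is finite it contains only finitely many copies of $\AAA$, so membership in $F_\CC$ depends on finitely many coordinates and $F_\CC$ is closed (indeed clopen). The contradiction hypothesis says exactly that each $F_\CC$ is nonempty: a $k$-coloring of $\binom{\CC}{\AAA}$ witnessing $\CC \not\lraw (\BB)^\AAA_{k,\ell}$ extends (arbitrarily on the remaining copies of $\AAA$) to a member of $F_\CC$. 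Finally the family $\{F_\CC\}$ has the finite intersection property: given $\CC_1, \dots, \CC_m$, the substructure $\CC := \langle C_1 \cup \dots \cup C_m \rangle_\fNN$ is finite and contains each $\CC_j$, so $F_\CC \subseteq \bigcap_{j} F_{\CC_j}$.

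By compactness $\bigcap_\CC F_\CC \neq \emptyset$; fix $c$ in this intersection. Taking $\CC$ to be any single copy $\BB'$ of $\BB$ (a finite substructure of $\fNN$), membership $c \in F_{\BB'}$ shows that $\binom{\BB'}{\AAA}$ is not $\ell$-chromatic under $c$. Thus $c$ is a $k$-coloring of $\binom{\fNN}{\AAA}$ admitting no $\ell$-chromatic copy of $\BB$. On the other hand, $\fNN \lraw (\fNN)^\AAA_{k,\ell}$ yields a copy $\fNN'$ of $\fNN$ in $\fNN$ with $\binom{\fNN'}{\AAA}$ $\ell$-chromatic; since $\BB \in \age(\fNN) = \age(\fNN')$, there is a copy $\BB'$ of $\BB$ with $\BB' \subseteq \fNN'$, and then $\binom{\BB'}{\AAA} \subseteq \binom{\fNN'}{\AAA}$ forces $\binom{\BB'}{\AAA}$ to be $\ell$-chromatic, a contradiction. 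Hence the desired finite $\CC$ exists, giving $\CC \lraw (\BB)^\AAA_{k,\ell}$ with $\CC \in \age(\fNN)$; as $k$ and $\BB$ were arbitrary, $d(\AAA, \age(\fNN)) \le \ell = T(\AAA, \fNN)$.

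The only real subtlety, and the step I would be most careful about, is carrying out the compactness argument with no cardinality restriction on $\fNN$, so that K\"onig's lemma is insufficient and one must invoke Tychonoff's theorem (equivalently propositional compactness) on the full space $X$. Local finiteness is used twice and essentially: to ensure each $F_\CC$ is determined by finitely many coordinates (hence closed), and to ensure that the substructure generated by finitely many finite sets is again finite, which secures the finite intersection property. This is the argument alluded to via Proposition 3 of \cite{NguyenVanThe}.
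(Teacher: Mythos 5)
Your proof is correct and takes essentially the same route as the paper: the paper does not spell out a proof of Observation \ref{big-implies-small}, but instead appeals to ``a compactness argument'' as in the proof of Proposition 3 of \cite{NguyenVanThe}, which is precisely the Tychonoff/finite-intersection-property argument you carry out in detail. The two places you flag as using local finiteness (clopenness of each $F_\CC$, and finiteness of the substructure generated by finitely many finite sets, which gives the finite intersection property) are exactly the points where that standard argument needs the hypothesis, so nothing is missing.
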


In known examples, typically the big Ramsey degrees are strictly larger (possibly infinite) than small Ramsey degrees.  Corollary \ref{CHcorollary}  of this paper shows that a version of the reverse inequality holds in the uncountable context of ultraproducts, with a restricted notion of coloring and under the assumption of CH.

\subsection{Internal colorings}

For the rest of this subsection, fix a finite relational signature $\LL$, 
a nonprincipal ultrafilter $\DD$ over $\omega$, and a sequence $(\MM_t)_{t \in \omega}$ of finite $\LL$-structures. For each $i \in \omega$, write $M_i$ for the  underlying set of $\MM_i$. Write $\fMM^* := \prod_{i \in \omega} \MM_i \, / \,  \DD$. 

Our main result, Theorem \ref{fabulous}, concerns partition relations for finite substructures of $\fMM^*$ under a restricted class of colorings, which we call {\em internal} colorings, that are controlled by the ultrafilter $\DD$. In order to define them, we need some notation.

\begin{notation}\label{ZeeCeeJay}
Fix $k \in \omega$ and a finite substructure $\AAA$ of $\fMM^*$. Let $Z \in \DD$ and let $\sC : = \{c_i : i \in Z\}$ be a collection of $k$-colorings of copies of $\AAA$ in $\MM_i$, for each $i \in Z$. For each $j \in k$, write
\[
   Z_\AAA^{\sC,j} : = \{i \in Z : \AAA[i] \cong \AAA \text{ and } c_i(\AAA[i])= j\}.
\]
\end{notation}

\begin{definition}\label{internalcolorcopies} 
Fix $k \in \omega$ and a finite substructure $\AAA$ of $\fMM^*$. Suppose $c^*$ is a $k$-coloring of the copies of $\AAA$ in $\fMM^*$.  We say that $c^*$ is {\bf internal} if there exist $Z \in \DD$ and a collection $\sC : = \{c_i : i \in Z\}$, where each $c_i$ is a $k$-coloring of the copies of $\AAA$ in $\MM_i$, such that for any copy $\AAA'$ of $\AAA$ in $\fMM^*$ and any $j \in k$, we have $c^*(\AAA') = j$ if and only if $Z_{\AAA'}^{\sC,j} \in \DD$.

A $k$-coloring of the copies of $\AAA$ in $\fMM^*$ that is not internal will be called {\bf external}.
\end{definition}

The following proposition demonstrates that internal colorings are easy to construct; any collection of colorings $\{c_i : i \in Z \}$ as in Definition \ref{internalcolorcopies} gives rise to an internal coloring of $\fMM^*$.

\begin{proposition}\label{hope} 
Fix $k \in \omega$ and a finite substructure $\AAA$ of $\fMM^*$. Let $Z \in \DD$ and let $\sC : = \{c_i : i \in Z\}$, where $c_i$ is a $k$-coloring of the copies of $\AAA$ in $\MM_i$ for each $i \in Z$.  Consider the map $c^* : \binom{\fMM^*}{\AAA} \raw k$ given by: For any copy $\AAA'$ of $\AAA$ in $\fMM^*$ and any $j \in k$, let $c^*(\AAA') = j$ if and only if $Z_{\AAA'}^{\sC,j} \in \DD$. Then $c^*$ is well defined, and hence $c^*$ is an internal $k$-coloring of the copies of $\AAA$ in $\fMM^*$. 
\end{proposition}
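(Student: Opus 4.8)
The plan is to reduce everything to a single claim: for each copy $\AAA'$ of $\AAA$ in $\fMM^*$ there is \emph{exactly one} $j \in k$ with $Z_{\AAA'}^{\sC,j} \in \DD$. This is precisely what being well defined requires, and since the defining clause of $c^*$ is word-for-word the condition appearing in Definition \ref{internalcolorcopies}, once well-definedness is established the assertion that $c^*$ is internal (with witnesses $Z$ and $\sC$) is immediate.

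First I would fix a copy $\AAA'$ of $\AAA$ in $\fMM^*$ and set $W := Z \cap \{t : \AAA'[t] \cong \AAA\}$. Since $\AAA' \cong \AAA$, Observation \ref{whendefined} applied to $\AAA'$ gives $\{t : \AAA'[t] \cong \AAA\} \in \DD$, and as $Z \in \DD$ is closed under finite intersection we obtain $W \in \DD$. The role of $W$ is that for every $t \in W$ the projection $\AAA'[t]$ is a genuine copy of $\AAA$ in $\MM_t$, so $c_t(\AAA'[t])$ is a single well-defined element of $k$; in particular the degenerate possibility $c_t = \emptyset$ is harmless, since on any coordinate where $\MM_t$ has no copy of $\AAA$ we have $\AAA'[t] \not\cong \AAA$, so that $t$ lies outside $W$ and contributes to none of the $Z_{\AAA'}^{\sC,j}$.

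Next I would observe that $\{Z_{\AAA'}^{\sC,j} : j \in k\}$ partitions $W$ into finitely many pieces: each $Z_{\AAA'}^{\sC,j}$ is contained in $W$ by its definition, every $t \in W$ belongs to exactly one of them, namely the one with $j = c_t(\AAA'[t])$, and the pieces are pairwise disjoint because $c_t(\AAA'[t])$ is a single value. The decisive step is then the prime (finite additivity) property of the ultrafilter: a member of $\DD$ split into finitely many disjoint pieces has exactly one piece in $\DD$. For existence, if all $k$ pieces lay outside $\DD$ then each complement $\omega \setminus Z_{\AAA'}^{\sC,j}$ would lie in $\DD$, whence their intersection $\omega \setminus W \in \DD$, contradicting $W \in \DD$; for uniqueness, two distinct pieces are disjoint and hence cannot both lie in $\DD$. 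Thus exactly one $j$ satisfies $Z_{\AAA'}^{\sC,j} \in \DD$, and $c^*(\AAA')$ is unambiguously defined.

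I expect no genuine obstacle here: the argument is a direct combination of Observation \ref{whendefined} with the standard prime-filter property of ultrafilters, and the passage from well-definedness to internality is a formality, since the displayed equivalence defining $c^*$ is exactly the condition demanded of an internal coloring in Definition \ref{internalcolorcopies}. The only points meriting a sentence of care are the bookkeeping that each $Z_{\AAA'}^{\sC,j}$ sits inside the $\DD$-large set $W$ and the treatment of the $c_t = \emptyset$ clause, both of which are dispatched by the definition of $W$ above.
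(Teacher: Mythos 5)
Your proposal is correct and follows essentially the same route as the paper's proof: intersect $Z$ with the $\DD$-large set from Observation \ref{whendefined}, note that the sets $Z_{\AAA'}^{\sC,j}$ for $j \in k$ form a finite partition of that intersection, and invoke the prime (finite additivity) property of the ultrafilter to get a unique $j$ with $Z_{\AAA'}^{\sC,j} \in \DD$. The only difference is cosmetic: you spell out the existence--uniqueness argument for the ultrafilter step and the $c_t = \emptyset$ bookkeeping, which the paper compresses into ``a standard property of ultrafilters.''
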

\begin{proof} Let $\AAA' \in \binom{\fMM^*}{\AAA}$ and write $Z_{\AAA'} : = \{i : \AAA'[i] \cong \AAA'\} \cap Z$. By Proposition \ref{whendefined} and since $Z \in \DD$, we have $Z_{\AAA'} \in \DD$. Notice that the set \{$Z_{\AAA'}^{\sC,j}$ : $j \in k \}$  is a finite partition of $Z_{\AAA'}$. Therefore, by a standard property of ultrafilters, there exists a unique $j \in k$ such that $Z_{\AAA'}^{\sC,j} \in \DD$. Thus $c^*$ is well defined. 
$\eop_{\ref{hope}}$
\end{proof}

When working within an ambient ultraproduct, a natural variation on the partition properties described in Subsection \ref{Ramseyprops} is to restrict to colorings that are internal. 
We use ``decorated'' arrow notation for such restricted partition relations.

\begin{notation}\label{arrowint}
Let $\AAA$ be a finite substructure of $\fMM^*$ and $\fNN$ an $\LL$-structure.
For $k, \ell \in \omega$, write
\[
    \fMM^* \lrawi \bigl( \fNN \bigr)^\AAA_{k, \ell}
\]
to mean that for any internal $k$-coloring $c$ of the copies of $\AAA$ in $\fMM^*$ there exists a copy $\fNN'$ of $\fNN$ in $\fMM^*$ such that $\binom{\fNN'}{\AAA}$ is $\ell$-chromatic by $c$.
\end{notation}

In order to state our main result, Theorem \ref{fabulous}, we will need the counterpart of the notion of big Ramsey degree for internal colorings within ultraproducts.  

\begin{definition}\label{internalbRd}
Let $\AAA$ be a finite substructure of $\fMM^*$. We say that $\AAA$ has {\bf finite internal big Ramsey degree in $\fMM^*$} if there exists $\ell \in \omega$ such that for each $k \in \omega$, the partition relation 
\[ 
    \fMM^* \lrawi \bigl(\fMM^* \bigr)^\AAA_{k,\ell} \, 
\]
holds. The smallest such $\ell \in \omega$ (if it exists) is called the {\bf internal big Ramsey degree of $\bm{\AAA}$ in $\bm{\fMM^*}$} and denoted $\bm{ T_\intern(\AAA,\fMM^*)}$. If no such $T_\intern(\AAA, \fMM^*)$ exists, we say that $\AAA$ has \emph{infinite internal big Ramsey degree in $\fMM^*$}. 

\end{definition}

\section{Big Ramsey degrees for internal colorings: $\omega$ case}\label{omegacase}

In this section we prove the main theorem of this paper, Theorem \ref{fabulous}, and deduce Corollary \ref{CHcorollary} on internal big Ramsey degrees for ultraproducts under CH. 

Fix a finite relational signature $\LL$, 
a nonprincipal ultrafilter $\DD$ over $\omega$, and a sequence $(\MM_i)_{i \in \omega}$ of finite $\LL$-structures. For each $i \in \omega$, write $M_i$ for the  underlying set of $\MM_i$. Write $\fMM^* := \prod_{i \in \omega} \MM_i \, / \,  \DD$.

Our main theorem concerns ultraproducts of sequences of finite $\LL$-structures that satisfy a mild coherence condition called {\em $\DD$-trending}, which we now describe. 

\begin{definition}\label{trending} 
We say that a sequence $(\MM_i)_{i \in \omega}$ of finite (nonempty) $\LL$-structures is {\bf $\mathbf\DD$-trending} if 
	\begin{enumerate}
	\item\label{incr} 
	$|M_i| \leq |M_j|$ for all $i \in j \in \omega$,

	\item\label{infty} 
	$\lim_{i \raw \infty} |M_i| = \infty$, and 

	\item\label{persist} $\{j : \MM_i \hookrightarrow \MM_j \} \in \DD$ for each $i\in\omega$. 
\end{enumerate}
\end{definition}

We now make some observations relating $\DD$-trending sequences and ages of  $\LL$-structures. 
\begin{observation}\label{agecontain} 
Suppose that $(\MM_i)_{i\in \omega}$ is $\DD$-trending. Then 
\[
    \displaystyle\bigcup\{\age(\MM_i) : i \in \omega \} = \age(\fMM^*).
\]
\end{observation}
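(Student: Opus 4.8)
The plan is to prove the two inclusions $\bigcup\{\age(\MM_t) : t \in \omega\} \subseteq \age(\fMM^*)$ and $\age(\fMM^*) \subseteq \bigcup\{\age(\MM_t) : t \in \omega\}$ separately. Since $\LL$ is finite and relational, every structure in play is locally finite, so ages consist precisely of the finite substructures (up to isomorphism). The key tools will be \los's Theorem (Theorem \ref{los}), the formula $\theta_\AAA$ isolating isomorphism type (Definition \ref{thetaA} and Remark \ref{strongerfortheta}), and the projection notation $\AAA[t]$ together with Observation \ref{whendefined}.

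For the forward inclusion, suppose $\AAA \in \age(\MM_i)$ for some $i \in \omega$, so $\AAA \hookrightarrow \MM_i$. By condition (\ref{persist}) of $\DD$-trending, the set $\{t : \MM_i \hookrightarrow \MM_t\} \in \DD$, and composing embeddings gives $\{t : \AAA \hookrightarrow \MM_t\} \in \DD$; in particular this set is nonempty. I would then show $\AAA \hookrightarrow \fMM^*$ directly: take the increasing enumeration $\ov{a}$ of $\AAA$ and the corresponding formula $\theta_\AAA(\ov{x})$. Since $\{t : \fMM_t \vDash (\exists \ov{x})\,\theta_\AAA(\ov{x})\} \in \DD$ (it contains the set where $\AAA$ embeds), \los's Theorem yields $\fMM^* \vDash (\exists \ov{x})\,\theta_\AAA(\ov{x})$, and a realizing tuple determines, via Remark \ref{strongerfortheta}, a copy of $\AAA$ inside $\fMM^*$. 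Hence $\AAA \in \age(\fMM^*)$.

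For the reverse inclusion, let $\AAA \in \age(\fMM^*)$, so $\AAA$ is (isomorphic to) a finite substructure of $\fMM^*$. By Observation \ref{whendefined}, the set $\{t : \AAA[t] \cong \AAA\} \in \DD$ is nonempty, so pick any $t$ in it: then $\AAA \cong \AAA[t]$, and $\AAA[t]$ is by construction a substructure of $\MM_t$, witnessing $\AAA \in \age(\MM_t) \subseteq \bigcup\{\age(\MM_s) : s \in \omega\}$. This direction is essentially immediate from the machinery already set up.

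I do not expect a serious obstacle here; the statement is a clean corollary of the projection formalism. The only point requiring mild care is the bookkeeping between ``abstract'' isomorphism types and ``concrete'' substructures of $\fMM^*$ and of the $\MM_t$ — that is, keeping straight that $\age$ is a class of isomorphism types while $\AAA[t]$ is a literal substructure. The reason $\DD$-trending is invoked (rather than used in full strength) is only condition (\ref{persist}), needed to guarantee that a structure embedding into a single $\MM_i$ also embeds into $\DD$-many $\MM_t$ and hence into the ultraproduct; conditions (\ref{incr}) and (\ref{infty}) play no role in this particular observation and are presumably reserved for later results.
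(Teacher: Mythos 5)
Your proof is correct and follows essentially the same route as the paper: the forward inclusion via condition (\ref{persist}) of $\DD$-trending, the formula $\theta_\AAA$, and \los's Theorem, and the reverse inclusion as an immediate consequence of \los's Theorem (you route it through Observation \ref{whendefined}, which is itself just the packaged \los{} argument). Your closing remark that only condition (\ref{persist}) is needed matches the paper's own observation that the reverse inclusion requires no $\DD$-trending at all.
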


\begin{proof}
Let $\AAA \in \age(\MM_i)$ for some $i \in \omega$. Then $\MM_i \vDash \exists \ov{x}\, \theta_\AAA(\ov{x})$, where $\theta_\AAA(\ov{x})$ is as in Definition \ref{thetaA}. Hence, $\MM_j \vDash \exists \ov{x}\, \theta_\AAA(\ov{x})$ for all $j$ such that $\MM_i$ embeds into $\MM_j$. By item \ref{persist} of Definition \ref{trending} and \los's Theorem, it follows that $\fMM^*\vDash \exists \ov{x}\, \theta_\AAA(\ov{x})$. Hence  $\AAA\in \age(\fMM^*)$. The reverse inclusion is immediate by \los's Theorem, and does not require that $(\MM_i)_{i\in\omega}$ is $\DD$-trending.
$\eop_{\ref{agecontain}}$
\end{proof}

The next observation asserts that the age of any $\LL$-structure contains a sequence of elements that form a cofinal chain in the partial order of $\LL$-embeddings on the age. Recall that when $\LL$ is finite and relational, there are only countably many isomorphism types among the elements of the age of any $\LL$-structure.

\begin{observation}\label{canbedone} 
Let $\fNN$ be an $\LL$-structure and suppose that $\sI : = (\AAA_r)_{r \in \omega}$ is an enumeration (possibly with repetitions) 
of elements of $\age(\fNN)$ such that $\sI$ contains a representative of each isomorphism class of structures in $\age(\fNN)$. Then there is a sequence $(\BB_i)_{i \in \omega}$ of finite $\LL$-structures such that 
\begin{enumerate}
    \item \label{subsequence}
    $\BB_i \in \sI$ for all $i \in \omega$;
    
    \item \label{chain}
    $\BB_{i-1} \hookrightarrow \BB_i$ for all $i \in \omega$, $i \ne 0$; and
    
    \item \label{cofinal}
    $\AAA_i \hookrightarrow\BB_j$ for all $i \in j \in \omega$.
\end{enumerate}
Further, if $\fNN$ is infinite, then $(\BB_i)_{i\in\omega}$ is $\DD$-trending.
\end{observation}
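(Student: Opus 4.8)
The plan is to build $(\BB_t)_{t\in\omega}$ by recursion on $t$, using the Joint Embedding Property (JEP) of $\age(\fNN)$ at each step to absorb simultaneously the previous term of the chain and the next structure from the enumeration $\sI$. I would first record two facts: $\age(\fNN)$ has JEP (the Remark following the definition of age), and, since $\LL$ is finite and relational, every member of $\age(\fNN)$ is finite. Iterating JEP finitely often then gives, for any finite subfamily of $\age(\fNN)$, a single member of $\age(\fNN)$ into which each structure of the subfamily embeds.

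For the recursion I would set $\BB_0 := \AAA_0 \in \sI$, noting that conditions \ref{chain} and \ref{cofinal} are vacuous at $t=0$. Given $\BB_{t-1}$ for $t \ge 1$, I would apply iterated JEP to the finite family $\{\BB_{t-1}\} \cup \{\AAA_r : r \in t\} \subseteq \age(\fNN)$ to obtain some $\fCC \in \age(\fNN)$ into which $\BB_{t-1}$ and each $\AAA_r$ with $r \in t$ embeds, and then choose $\BB_t \in \sI$ with $\BB_t \cong \fCC$ (possible since $\sI$ lists a representative of every isomorphism class in $\age(\fNN)$). Composing the embeddings into $\fCC$ with this isomorphism yields $\BB_{t-1} \hookrightarrow \BB_t$ and $\AAA_r \hookrightarrow \BB_t$ for all $r \in t$, securing conditions \ref{subsequence}--\ref{cofinal}.

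Finally I would verify that $(\BB_t)_{t\in\omega}$ is $\DD$-trending when $\fNN$ is infinite. Condition \ref{incr} is immediate, since $\BB_s \hookrightarrow \BB_t$ along the chain for $s \in t$ forces $|B_s| \le |B_t|$. For condition \ref{persist}, transitivity of the chain gives $\BB_i \hookrightarrow \BB_t$ for every $t \ge i$, so $\{t : \BB_i \hookrightarrow \BB_t\}$ is cofinite and hence lies in the nonprincipal ultrafilter $\DD$. The one place I expect to genuinely need the hypothesis that $\fNN$ is infinite --- and the main point of this part --- is the divergence condition \ref{infty}: here I would use that an infinite relational structure has finite substructures of every finite cardinality, so $\age(\fNN)$, and therefore $\sI$, contains $\AAA_r$ with $|A_r|$ arbitrarily large; condition \ref{cofinal} then forces $|B_t| \ge |A_r|$ for all $t > r$, which together with monotonicity gives $\lim_{t\to\infty} |B_t| = \infty$.
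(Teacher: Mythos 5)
Your proof is correct and follows essentially the same route as the paper's: a recursion using the Joint Embedding Property of $\age(\fNN)$, with condition \ref{persist} verified via cofinite sets in the nonprincipal ultrafilter and condition \ref{infty} from the existence of arbitrarily large finite substructures of an infinite relational structure. The only cosmetic difference is that you absorb all of $\{\AAA_r : r \in t\}$ at stage $t$ by iterating JEP, whereas the paper applies JEP once per stage (to $\BB_{t-1}$ and $\AAA_{t-1}$) and obtains condition \ref{cofinal} by transitivity of embeddings along the chain.
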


\begin{proof}
 Define $\BB_0 = \AAA_0$. Assume, for some $t \ne 0$, that $\BB_{i-1}$ exists as required. Since $\sI$ contains a representative of each isomorphism type in $\age(\fNN)$, it has the Joint Embedding Property. 
 Hence, there exists 
 $\BB_i \in \sI$ such that $\BB_{i-1} \hookrightarrow \BB_i$ and $\AAA_{i-1} \hookrightarrow \BB_t$. Then $\BB_i$ is as required.
 
 Note that the sequence $(\BB_i)_{i \in \omega}$ thus constructed satisfies item \eqref{persist} of Definition \ref{trending} for $\fNN$ of finite or infinite cardinality. This is because for any $j \in \omega$, $\{i : j \le i\} \in \DD$ since $\DD$ is nonprincipal, and by construction, $\{i : j \le i\} \subseteq \{i : \BB_j \hookrightarrow \BB_i \}$. 
 
 When $\fNN$ is infinite there are elements of $\age(\fNN)$, hence of $\sI$, of arbitrarily large finite cardinality. In that case, the sequence $(\BB_i)_{i \in \omega}$ as constructed must satisfy items \ref{incr} and \ref{infty} of Definition \ref{trending} as well. 
$\eop_{\ref{canbedone}}$
\end{proof}

\begin{proposition}\label{main_extracted} Suppose that
$(\MM_i)_{i\in\omega}$ is $\DD$-trending and let $\AAA$ be a finite substructure of $\fMM^*$. Further suppose that $k \in \omega$ and we are given an internal $k$-coloring $c^*$ of copies of $\AAA$ in $\fMM^*$.  

Let $\widehat{\LL}$ be the signature consisting of $\LL$ along with a new $n$-ary function symbol $f$ and new constant symbols $e_0,\ldots,e_{k-1}$.  Then there exist $\widehat{\LL}$-expansions $\widehat{\MM}_i$ of $\MM_i$, for $i \in \omega$, such that 
for any copy $\AAA'$ of $\AAA$ in $\fMM^*$, 
any enumeration $\ov{b}$ of the underlying set of $\AAA'$, and any $j\in k$, we have $f^{\widehat{\fMM^*}}(\ov{b}) = e_j^{\widehat{\fMM^*}}$ if and only if $c^*(\AAA') = j$, where $\widehat{\fMM^*}:=\prod_{i \in \omega} \widehat{\MM}_i / \DD$.
\end{proposition}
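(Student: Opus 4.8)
The plan is to exploit internality to transport $c^*$ down to the factor colorings, and then to hard-code each factor coloring into the new function symbol, so that its ultraproduct reads off $c^*$ by \los's Theorem. First I would unpack the hypothesis: since $c^*$ is internal, Definition \ref{internalcolorcopies} furnishes $Z \in \DD$ and a family $\sC = \{c_t : t \in Z\}$ of $k$-colorings of the copies of $\AAA$ in the $\MM_t$ such that $c^*(\AAA') = j$ iff $Z_{\AAA'}^{\sC,j} \in \DD$ for every copy $\AAA'$ of $\AAA$ in $\fMM^*$ and every $j \in k$. Recalling that each $M_t$ may be taken to be a positive integer, its elements are $0, \dots, |M_t|-1$, so each $j \in k$ is literally an element of $M_t$ once $|M_t| \geq k$; by items \ref{incr} and \ref{infty} of Definition \ref{trending} this holds for all but finitely many $t$, so $Z' := Z \cap \{t : |M_t| \geq k\} \in \DD$.

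Next I would build the expansions. For $t \in Z'$, set $e_j^{\widehat{\MM}_t} := j$ for each $j \in k$, and define the $n$-ary operation $f^{\widehat{\MM}_t} \colon M_t^n \to M_t$ by $f^{\widehat{\MM}_t}(\ov d) := c_t(\langle \ran(\ov d)\rangle_{\MM_t})$ whenever $\ov d$ has $n$ distinct entries and $\langle \ran(\ov d)\rangle_{\MM_t} \cong \AAA$ (so that $c_t$ applies), and $f^{\widehat{\MM}_t}(\ov d) := 0$ otherwise; for the finitely many $t \notin Z'$ I would interpret the new symbols arbitrarily, as those coordinates lie outside $\DD$. The key point is that $f^{\widehat{\MM}_t}$ depends only on $\ran(\ov d)$, matching the fact that the target conclusion depends on the copy $\AAA'$ and not on the chosen enumeration.

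Finally I would verify the biconditional by localizing to a $\DD$-large coordinate set. Given a copy $\AAA'$, an enumeration $\ov b$ of its underlying set, and $j \in k$, put
\[
W := Z' \cap \{t : \AAA'[t] \cong \AAA\} \cap \{t : b_0[t], \dots, b_{n-1}[t] \text{ are distinct}\}.
\]
Here $\{t : \AAA'[t]\cong\AAA\} \in \DD$ by Observation \ref{whendefined} (using $\AAA'\cong\AAA$), the distinctness set is in $\DD$ because the $b_i$ are $\approx_\DD$-distinct in $\fMM^*$, and $Z' \in \DD$; hence $W \in \DD$. For $t \in W$ the tuple $\ov b[t]$ enumerates $\AAA'[t] \cong \AAA$, so $f^{\widehat{\MM}_t}(\ov b[t]) = c_t(\AAA'[t])$ while $e_j^{\widehat{\MM}_t} = j$; thus $f^{\widehat{\MM}_t}(\ov b[t]) = e_j^{\widehat{\MM}_t}$ exactly when $t \in Z_{\AAA'}^{\sC,j}$ (Notation \ref{ZeeCeeJay}). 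Consequently the sets $\{t : f^{\widehat{\MM}_t}(\ov b[t]) = e_j^{\widehat{\MM}_t}\}$ and $Z_{\AAA'}^{\sC,j}$ coincide on $W \in \DD$, so one belongs to $\DD$ iff the other does. Applying \los's Theorem (Theorem \ref{los}) to the atomic $\widehat{\LL}$-formula $f(\ov x) = e_j$ and combining with internality then yields $f^{\widehat{\fMM^*}}(\ov b) = e_j^{\widehat{\fMM^*}}$ iff $Z_{\AAA'}^{\sC,j} \in \DD$ iff $c^*(\AAA') = j$, as required.

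The only genuine obstacle is bookkeeping: ensuring that $Z$, $\{t : |M_t| \geq k\}$, $\{t : \AAA'[t]\cong\AAA\}$, and the injectivity set are simultaneously $\DD$-large, so that the pointwise identity $f^{\widehat{\MM}_t}(\ov b[t]) = c_t(\AAA'[t])$ holds on a set in $\DD$, and arranging $f$ to be insensitive to the order of an enumeration so that the final statement is genuinely about the copy $\AAA'$. No step is conceptually deep once internality has been unpacked.
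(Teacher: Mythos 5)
Your proposal is correct and follows essentially the same route as the paper's own proof: unpack internality into $Z$ and $\sC$, hard-code the factor colorings into the interpretations of $f$ and the constants $e_j$ (the paper takes the $e_j^{\widehat{\MM}_t}$ to be any $k$ distinct elements of $M_t$ and has $f$ output $e_j^{\widehat{\MM}_t}$ rather than the integer $j$, which under your convention is the same thing), and transfer via \los's Theorem; your localization to the set $W$ even makes the converse direction of the biconditional explicit, which the paper leaves implicit via the distinctness of the constants. One harmless slip: the complement of $Z'$ need not be finite (only $\{t : |M_t| < k\}$ is finite, while $\omega \setminus Z$ may be infinite), but as you yourself note, those coordinates lie outside $\DD$, so interpreting the new symbols arbitrarily there is still fine.
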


\begin{proof}
	Fix an internal $k$-coloring $c^*$ of copies of $\AAA$ in $\fMM^*$.  Then, by Definition \ref{internalcolorcopies} and recalling Notation \ref{ZeeCeeJay}, the coloring $c^*$ is determined by a set $Z \in \DD$ and collection $\sC := \{c_i : i \in Z\}$, 
	where each $c_i$ is a $k$-coloring of the copies of $\AAA$ in $\MM_i$, in the following manner:  $c^*$ assigns $j \in k$ to a copy $\AAA'$ of $\AAA$ in $\fMM^*$ if and only if $Z_{\AAA'}^{\sC, j} \in \DD$.

Since the sequence $(\MM_i)_{i\in \omega}$ is $\DD$-trending by hypothesis and $\DD$ is nonprincipal, so it contains the cofinite sets, we have $Z' : = \{i : |M_i| \ge k \} \in \DD$. Hence, by replacing $Z$ with $Z \cap Z'$, we may assume that $|M_i| \geq k$ for all $i \in Z$.

Write $n:=|A|$.  

For each $i\in\omega$, define an expansion of $\MM_i$ to an $\widehat{\LL}$-structure $\widehat{\MM}_i$ as follows: 

\begin{enumerate}

\item [(i)] 
the $e_0^{\widehat{\MM}_i}, \ldots, e_{k-1}^{\widehat{\MM}_i}$ are  distinct elements of $\widehat{\MM}_i$ if $i \in Z$, 
and arbitrary elements of $\widehat{\MM}_i$ if $i \notin Z$; and 

\item[(ii)]
for any $n$-tuple $\ov{s}$ consisting of 
elements from $M_i$,
letting $\mathcal{X}_{\ov{s}}$ denote the substructure of $\MM_i$ having underlying set $\ran(\ov{s})$, 
\[
f^{\widehat{\MM}_i}(\ov{s}) :=
\begin{cases}
e_j^{\widehat{\MM}_i} & \text{ if } \ i \in Z, \ \mathcal{X}_{\ov{s}} \cong \AAA \ \text{ and } \ c_i(\mathcal{X}_{\ov{s}})=j, \\ \\ 
e_0^{\widehat{\MM}_t} & \text{ otherwise}.
\end{cases}
\]
\end{enumerate}

\noindent Write $\widehat{\fMM^*}:=\prod_{i \in \omega} \widehat{\MM}_i / \DD$.  It is easy to see that ultraproducts commute with reducts and so $\widehat{\fMM^*} \uphp \LL 
= \fMM^*$
(e.g., Theorem 4.1.8 in \cite{C-K}.) 

The intuition is that for each $i\in \omega$, the constants $e_0^{\widehat{\MM}_i}, \ldots , e^{\widehat{\MM}_i}_{k-1} \in M_i$ code the color classes of $c_i$ and the function $f^{\widehat{\MM}_i}$ keeps track of the color assigned by $c_i$ to each copy of $\AAA$ in $\MM_i$.
We will show that $f^{\widehat{\fMM^*}}$ keeps track of the color assigned by $c^*$ to each copy of $\AAA$ in $\fMM^*$, in the following sense: for any copy $\AAA'$ of $\AAA$ in $\fMM^*$, 
any enumeration $\ov{b}$ of the underlying set of $\AAA'$, and any $j\in k$, we have $f^{\widehat{\fMM^*}}(\ov{b}) = e_j^{\widehat{\fMM^*}}$ if and only if $c^*(\AAA') = j$.

Note from the definition of $f^{\widehat{\MM}_t}$ that if $\ov{s}, \ov{s}'$ enumerate the same $n$-element subset of $M_i$, then $f^{\widehat{\MM}_i}(\ov{s})=f^{\widehat{\MM}_i}(\ov{s}')$.
Consequently, if $\ov{b}, \ov{b}'$ enumerate the same $n$-element subset of the underlying set of $\widehat{\fMM^*}$, then we have
\[
   \bigl \{i : \widehat{\MM}_i \vDash f\bigl (\ov{b}[i] \bigr)=f\bigl (\ov{b}'[i] \bigr ) \bigr \} = \omega \in \DD,
\]
and so by \los's Theorem, $\widehat{\fMM^*} \vDash f(\ov{b}) = f(\ov{b}')$.

Let $\AAA' \in \binom{\fMM^*}{\AAA}$, let $\ov{b}$ be an enumeration of the underlying set of $\AAA'$, and let $j := c^*(\AAA')$. 
We wish to show that 
\[
    Y^j := \{i : \widehat{\MM}_i \vDash f(\ov{b}[i]) = e_j \} \in \DD, 
\]
in order to conclude that $\widehat{\fMM^*} \vDash f(\ov{b}) = e_j$. 
Since $c^*$ is internal, 
we know that $Z_{\AAA'}^{\sC, j} \in \DD$.
For any $i \in Z_{\AAA'}^{\sC, j}$, we have that $i\in Z$ and $\AAA'[i] \cong \AAA'$ and $c_i(\AAA'[i]) = j$.  The substructure of $\MM_i$ with underlying set $\ov{b}[i]$ is $\AAA'[i]$, so by the definition of $f^{\widehat{\MM}_i}$ we have $f^{\widehat{\MM}_i}(\ov{b}[i]) = e_j^{\widehat{\MM}_i}$, and thus $\widehat{\MM}_i \vDash f(\ov{b}[i]) = e_j$.  This shows that $Z_{\AAA'}^{\sC, j} \subseteq Y^j$, and so $Y^j \in \DD$ as desired.
$\eop_{\ref{main_extracted}}$
\end{proof}

For the following result,
recall Notation \ref{arrowint} for partition relations restricted to internal colorings.

\begin{theorem}\label{fabulous}  
Let $\LL$ be a finite relational signature, $\DD$ a nonprincipal ultrafilter over $\omega$, and $(\MM_i)_{i\in\omega}$ a $\DD$-trending sequence of finite $\LL$-structures.  Define $\fMM^* := \prod_{i \in \omega} \MM_i / \DD$.  Let $\fNN$ be an $\LL$-structure of cardinality at most $\aleph_1$ such that
\[
    \K := \age(\fNN) \subseteq \bigcup \{\age(\MM_i) : i \in \omega \}.
\]
Fix $\AAA \in \K$ and suppose $\AAA$ has finite small Ramsey degree in $\K$, with $t := t(\AAA, \K)$. Then for any $k \in \omega$, the partition relation $ \fMM^* \lrawi \bigl( \fNN \bigr)^\AAA_{k, t}$ holds.
\end{theorem}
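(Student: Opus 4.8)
The plan is to transfer a small-Ramsey-degree witness from the class $\K$ up to the ultraproduct, using Proposition \ref{main_extracted} to convert the given internal coloring into a genuine first-order definable function, so that \los's Theorem and the saturation of $\fMM^*$ can be brought to bear. First I would fix an arbitrary internal $k$-coloring $c^*$ of copies of $\AAA$ in $\fMM^*$ and apply Proposition \ref{main_extracted} to obtain $\widehat{\LL}$-expansions $\widehat{\MM}_t$ carrying a function symbol $f$ and constants $e_0,\ldots,e_{k-1}$ such that, in $\widehat{\fMM^*}$, the value $f(\ov{b})=e_j$ records exactly the $c^*$-color $j$ of the copy of $\AAA$ enumerated by $\ov{b}$. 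Crucially, because $f^{\widehat{\MM}_t}$ depends only on the underlying set of its argument, the color of a copy of $\AAA$ in $\fMM^*$ is a well-defined first-order property with parameters.

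Next I would set up the finite approximations on the $\MM_t$ side. Since $d = d(\AAA,\K)$, for each finite $\BB \in \K$ and the fixed $k$ there is $\CC \in \K$ with $\CC \lraw (\BB)^\AAA_{k,d}$. Using Observation \ref{canbedone}, I would choose a $\DD$-trending cofinal chain $(\BB_t)_{t\in\omega}$ in $\K = \age(\fNN) \subseteq \bigcup_t \age(\MM_t)$ exhausting the age of $\fNN$, so that each $\BB_t$ (equivalently a suitably large $\CC_t$) embeds into $\MM_t$ for $\DD$-many $t$. On each $\widehat{\MM}_t$, the restriction of $f$ to copies of $\AAA$ is literally a $k$-coloring $c_t$, so the small-Ramsey-degree relation $\CC_t \lraw (\BB_t)^\AAA_{k,d}$ gives, inside $\MM_t$, a copy of $\BB_t$ on which $c_t$ (hence $f$) takes at most $d$ values. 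The key move is to express, for each $t$, the existence of such a $d$-chromatic copy of $\BB_t$ as the satisfaction of a first-order $\widehat{\LL}$-formula, and then to assemble these into a type over $\fMM^*$ asserting the existence of a copy of $\fNN$ that is $d$-chromatic for $f$.

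The heart of the argument is the saturation step. I would build a type $p(\ov{x})$ in a tuple of variables $\ov{x} = (x_\nu)_{\nu \in N}$ indexed by the underlying set $N$ of $\fNN$ (so $|\ov{x}| \le \aleph_1$), whose conditions say: (a) the $x_\nu$ realize the atomic diagram of $\fNN$, i.e. for every finite $F \subseteq N$ the formula $\theta_{\langle F\rangle_{\fNN}}$ holds of the corresponding subtuple, thereby forcing $\langle x_\nu : \nu \in N\rangle$ to enumerate a copy $\fNN'$ of $\fNN$; and (b) for each copy of $\AAA$ occurring among these variables, the value of $f$ lies in a fixed $d$-element subset $\{e_{j_0},\ldots,e_{j_{d-1}}\}$ of the $k$ colors, so that $\binom{\fNN'}{\AAA}$ is $d$-chromatic. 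I would show $p$ is finitely satisfiable in $\fMM^*$: any finite fragment mentions only finitely many variables, i.e. finitely many points of $\fNN$, generating some $\BB \in \K$; taking $\CC \in \K$ witnessing $\CC \lraw (\BB)^\AAA_{k,d}$, the relation $\CC \hookrightarrow \MM_t$ for $\DD$-many $t$ together with the coloring interpretation of $f$ produces, via \los, a copy of $\BB$ in $\fMM^*$ that is $d$-chromatic, realizing the fragment (one must also fix which $d$-element color-set to demand, choosing it uniformly by a pigeonhole/pruning argument so that the finite pieces cohere). Since $|\Domp(p)| \le |A| < \aleph_1 \le \kappa$ and $|\ov{x}| \le \aleph_1 = \kappa$, where $\fMM^*$ is $\aleph_1$-saturated by Corollary \ref{minimal_saturation}, Lemma \ref{technical} yields a realization, i.e. a copy $\fNN'$ of $\fNN$ in $\fMM^*$ with $\binom{\fNN'}{\AAA}$ $d$-chromatic under $c^*$.

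The main obstacle I anticipate is the coherence of the finite-satisfiability step: the small-Ramsey-degree property gives, for each finite fragment separately, \emph{some} $d$-element palette of colors, but these palettes could a priori vary with the fragment, whereas the type must commit to conditions that are simultaneously satisfiable. The fix is to build $p$ not by naming a fixed palette in advance but by adding, for each copy of $\AAA$ among the $x_\nu$, only the disjunctive constraint ``$f$ takes one of at most $d$ values on the whole family,'' encoded so that realizability of each finite subset follows directly from $\CC \lraw (\BB)^\AAA_{k,d}$; I would phrase the $d$-chromaticity condition as a bound on the number of distinct colors over the copy of $\fNN$, and verify that this bound is preserved under taking finite subsets, so no advance choice of palette is needed and finite satisfiability reduces cleanly to the small-Ramsey relation transported along $\CC \hookrightarrow \MM_t$.
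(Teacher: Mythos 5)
Your proposal is correct, and its skeleton is the same as the paper's: convert the internal coloring into a definable function via Proposition \ref{main_extracted}, transfer the small-Ramsey relations $\CC_\BB \lraw (\BB)^\AAA_{k,d}$ into $\widehat{\fMM^*}$ by \los's Theorem, write a type in at most $\aleph_1$ variables asserting ``a copy of $\fNN$ whose copies of $\AAA$ realize at most $d$ colors,'' and realize it using Corollary \ref{minimal_saturation} together with Lemma \ref{technical}. Where you genuinely diverge is at exactly the obstacle you flag: how to make the finite fragments cohere. The paper resolves it the way you mention only parenthetically in your third paragraph: it fixes a cofinal chain $(\BB_i)_{i\in\omega}$ in $\K$ (Observation \ref{canbedone}), notes that each $\BB_i$ admits some size-$d$ palette $S(\BB_i)$ with $\widehat{\fMM^*} \vDash \varphi_{\BB_i,S(\BB_i)}$, applies pigeonhole over the finitely many size-$d$ subsets of $k$ to get an infinite (hence cofinal) $G \subseteq \omega$ with constant palette $S_0$, and then names $S_0$ outright in the type $\Sigma$; finite satisfiability follows because every finite $\BB \subseteq \fNN$ embeds into some $\BB_j$ with $j \in G$. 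Your preferred fix in the last paragraph --- demand only that at most $d$ colors occur, without naming them --- also works and is arguably cleaner, since it makes the cofinal chain and the pigeonhole step unnecessary: finite satisfiability then follows directly from $\widehat{\fMM^*} \vDash \varphi_\BB$ for the structure $\BB$ generated by the fragment's variables. The one point you must make precise is the first-order encoding of ``at most $d$ distinct colors'': this is a global condition on an infinite family and cannot be attached to single copies of $\AAA$, as your phrasing suggests. The standard encoding is to include, for every $(d+1)$-tuple of copies of $\AAA$ among the variables, the collision disjunction $\bigvee_{0 \le i < j \le d} f(\ov{y}_i) = f(\ov{y}_j)$; each such formula has finitely many free variables, any $d$-chromatic copy of the fragment's generated structure satisfies all of the fragment's collision formulas, and a realization of the full type yields $\fNN'$ in which no $d+1$ copies of $\AAA$ get pairwise distinct colors, i.e., $\binom{\fNN'}{\AAA}$ is $d$-chromatic. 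One further small precision: the saturation you invoke must be that of the expansion $\widehat{\fMM^*}$ rather than of $\fMM^*$ itself, which is fine because $\widehat{\LL}$ is still finite and Corollary \ref{minimal_saturation} applies to the expanded ultraproduct.
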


\begin{proof} 
Let $\LL$, $\DD$, $(\MM_i)_{i\in \omega}$, $\fMM^*$, $\fNN$, $\K$, $\AAA$ be as in the statement of the theorem. By $\theta_\AAA(\ov{x})$ we mean the quantifier-free formula defined in Definition \ref{thetaA}, where we set $\fNN$ to be $\fMM^*$. 
 Note that the conclusion of the theorem is non-vacuous, as Observation \ref{agecontain} tells us that $\binom{\fMM^*}{\AAA}$ is nonempty and Proposition \ref{hope} tells us that internal colorings of $\binom{\fMM^*}{\AAA}$ exist.
	
	By assumption, $t:=t(\AAA,\K) \in \omega$.  Let $k \in \omega$.  Our goal is to show that for any internal $k$-coloring $c$ of copies of $\AAA$ in $\fMM^*$, there exists a copy $\fNN'$ of $\fNN$ in $\fMM^*$ such that $\binom{\fNN'}{\AAA}$ is $t$-chromatic by $c$.
	
Fix an internal $k$-coloring $c^*$ of copies of $\AAA$ in $\fMM^*$.  Let $\widehat{\LL}$ be the signature consisting of $\LL$ along with a new $n$-ary function symbol $f$ and new constant symbols $e_0,\ldots,e_{k-1}$.  By Proposition \ref{main_extracted} there exist $\widehat{\LL}$-expansions $\widehat{\MM}_t$ of $\MM_i$ for all $i \in \omega$ such that if we denote $\widehat{\fMM^*}:=\prod_{i \in \omega} \widehat{\MM}_i / \DD$, then 
for any copy $\AAA'$ of $\AAA$ in $\fMM^*$, 
any enumeration $\ov{b}$ of the underlying set of $\AAA'$, and any $j\in k$, we have $f^{\widehat{\fMM^*}}(\ov{b}) = e_j^{\widehat{\fMM^*}}$ if and only if $c^*(\AAA') = j$.
Recall from the proof of Proposition \ref{main_extracted} that \\$\widehat{\fMM^*} \uphp \LL = \fMM^*$.
Since the interpretation of $f$ is not sensitive to the enumeration, we can speak of $f^{\widehat{\fMM^*}}$ as acting directly on ${\fMM^* \choose \AAA}$.
Thus, it suffices to show that  there is a substructure $\fNN'$ of $\fMM^*$ isomorphic to $\fNN$ and there exists
$S_0 \subseteq \{e_j^{\widehat{\fMM^*}} : j \in k\}$ such that $|S_0| = t$ and with the property:
\begin{eqnarray}\label{main_key}
\textrm{~for any~} \AAA' \in {\fNN' \choose \AAA}, \textrm{~there is an enumeration~} \ov{b} \textrm{~of~} \AAA' \textrm{~such that~} f^{\widehat{\fMM^*}}(\ov{b}) \in S_0.
\end{eqnarray}

By the definition of $t$, for any $k$, in particular for $k$ as above, for any desired $\BB \in \K$ there exists a $\CC \in \K$ such that $\CC \raw(\BB)^\AAA_{k,t}$; we will choose one such $\CC$ for each $\BB$ and denote it by $\CC_\BB$.  Since any element of $\K$ embeds in $\fMM^*$, by our assumptions and Observation \ref{agecontain}, we must have that $X_\BB :=\{i : \CC_\BB \hookrightarrow \MM_i\} \in \DD$ by \los's theorem.

Given any $\BB \in \K$ and $S \subseteq k$,
let $\varphi_{\BB,S}$ express that ``there is a copy of $\BB$ all of whose copies of $\AAA$ take on only colors from $S$ under $f$''.
Let $n:=|A|$.

$$\varphi_{\BB,S} := \exists \ov{x} (\theta_\BB(\ov{x}) \wedge \left( \bigwedge_{\text{$\ran(\ov{y}) \subseteq \ran(\ov{x})$,}\atop \text{$|\ov{y}|=n$}} [\theta_\AAA(\ov{y}) \raw \bigvee_{s \in S} f(\ov{y}) = e_{s}]\right) .$$

Now we define a formula $\varphi_\BB$ that expresses that ``there is a $t$-chromatic copy of $\BB$". Note that the disjuncts in $\varphi_\BB$ are not necessarily mutually exclusive:
$$\varphi_\BB :=  \bigvee_{S \subseteq k, |S|=t}  \varphi_{\BB,S}$$

For any $\BB \in \KK$, any ${\MM}_i$ which embeds a copy of $\CC_\BB$ (and is size at least $k$) 
will satisfy $\varphi_\BB$, by the existence of $t=t(\AAA,\K)$. So, in fact

$$\DD \ni X_{\BB} \subseteq \{i : \widehat{\MM}_i \vDash \varphi_\BB \}$$

\noindent and thus $\widehat{\fMM^*} \vDash \varphi_\BB$ by \los's theorem.

Thus, for any $\BB \in \KK$, $\widehat{\fMM^*}$ satisfies some disjunct of $\varphi_\BB$, where this disjunct selects a specific size-$t$ subset of $k$.  So,
for every $\BB \in \KK$, let $S(\BB):=$ one such $S$ ($S \subseteq k$ such that $|S| = t$) such that $\fMM^* \vDash \varphi_{\BB,S}$.  Thus,
\begin{eqnarray}\label{1main}
\fMM^* \vDash \varphi_{\BB,S(\BB)}.
\end{eqnarray}
Fix a sequence
$(\BB_i)_{i \in \omega}$ 
from $\KK$ satisfying the conditions in Observation \ref{canbedone}.
There are finitely many size-$t$ subsets $S \subseteq k$.  Thus, there is some subset of size $t$, $S_0 \subseteq k$, such that $G=\{i : S(B_i) = S_0 \}$ is an infinite subset of $\omega$.

The following are partial $\hat{\LL}$-types in $\aleph_1$-many variables.
First we define the atomic diagram of $\fNN$ in the variables $(x_\alpha)_{\alpha \in \omega_1}$ assuming, without loss of generality, that the underlying set $N=\omega_1$.  Though the variables are assumed to be distinct, the interpretations of the variables are not required to be distinct:

\begin{multline}
\Gamma((x_\alpha)_{ \alpha \in \omega_1 + k }) :=\bigcup_{\ov{\alpha} \in {\omega_1}^p} \{\varphi(x_{\alpha_0},\ldots,x_{\alpha_{p-1}}) :  \\
 \varphi(\ov{x}_{\ov{\alpha}})\textrm{~a literal in~} \LL,
\fNN \vDash \varphi(\alpha_0,\ldots,\alpha_{p-1}) \}\cup \bigcup_{s\in k}\{x_{\omega_1+s}=e_s\}.
\end{multline}

Now define another type that states that all (increasing) copies of $\AAA$ from the underlying set are assigned a color from $S_0$ by $f$:

$$\Sigma( (x_\alpha)_{\alpha\in \omega_1}) = \{ \bigvee_{s \in S_0} f(\ov{x}_{\ov{\alpha}}) = e_s : \ov{\alpha} \in {\omega_1}^n, \fNN \vDash \theta_{\AAA}(\ov{\alpha}) \}.
$$

Corollary \ref{minimal_saturation} together with Lemma \ref{technical} guarantee sufficient saturation of $\widehat{\fMM^*}$ to realize $\Gamma \cup \Sigma$, as long as $\Gamma \cup \Sigma$ is indeed a type finitely satisfiable in $\widehat{\fMM^*}$.
Any realization $\ov{a}$ of $\Gamma\cup \Sigma$ in $\widehat{\fMM^*}$ restricted to the initial segment of length $\omega_1$ is a copy $\fNN'$ of $\mathcal{\fNN}$ in $\fMM^*$.
Moreover, since $\ov{a}$ is a realization of $\Gamma\cup \Sigma$ in $\widehat{\fMM^*}$, the type $\Gamma\cup \Sigma$ implies the condition in \eqref{main_key}, as desired.

\

It remains to show that $\Gamma \cup \Sigma$ is finitely satisfiable in $\widehat{\fMM^*}$.  Any finite subset $\Gamma_0$ of $\Gamma$ can be expanded to have the following form, for some finite substructure $\BB \subseteq \fNN$:

$$\Gamma_0((x_\alpha)_{\alpha \in B\cup [\omega_1, \omega_1+k)}) = \{\varphi(x_{\alpha_0},\ldots,x_{\alpha_{p-1}}) : \varphi(\ov{x}_{\ov{\alpha}})\textrm{~a literal in~} \LL,  \BB \vDash \varphi(\ov{\alpha}) \}$$ 
$$\cup \; \{ \bigvee_{s \in S_0} f(\ov{x}_{\ov{\alpha}}) = e_s : \ov{\alpha} \in B^n, \BB\vDash \theta_{\AAA}(\ov{\alpha})\}\cup \bigcup_{s\in k}\{x_{\omega_1+s}=e_s\}.$$

\noindent The substitution of $\BB$ for $\fNN$ in $\Gamma_0$ is valid since $\BB \subseteq \fNN$ and $\theta_\AAA(\ov{x})$ is quantifier-free.
Since $\BB \in \age(\fNN) = \K$, there is some $i \in \omega$ such that $\BB \hookrightarrow \BB_i$.  Since $G \subseteq \omega$ is infinite, it must be cofinal in $\omega$, so there exists $m \in \omega\setminus i$ such that $m \in G$.  Note that  
\begin{eqnarray}\label{main4}
\BB \hookrightarrow \BB_m,
\end{eqnarray}
because $\BB \hookrightarrow \BB_i \hookrightarrow \BB_m$.
Since $m\in G$, $S(\BB_m) = S_0$, so $\widehat{\fMM^*} \vDash \varphi_{\BB_m,S(\BB_m)}$ implies that $\widehat{\fMM^*} \vDash \varphi_{\BB_m,S_0}$.
Thus, there exists a copy $\BB'_m$ of $\BB_m$ in $\fMM^*$ such
 that 
 \begin{eqnarray}\label{main3}
 {f^{\widehat{\fMM^*}}\;} ''  {\BB'_m \choose \AAA} \subseteq \{e^{\widehat{\fMM^*}}_s : s \in S_0\}.
 \end{eqnarray}
Define $\widehat{\BB'}_m$ to be the
$\widehat{\LL}$-substructure of $\widehat{\fMM^*}$ on $\BB'_m \cup \{e_j^{\widehat{\fMM^*}} : j \in k\}$ (not necessarily a disjoint union). 
We can use $\widehat{\BB'}_m$ to satisfy $\Gamma_0\cup \Sigma$ 
by \eqref{main4} and \eqref{main3} and by Remark \ref{strongerfortheta}.
$\eop_{\ref{fabulous}}$
\end{proof}

Recall Definition \ref{internalbRd} of internal big Ramsey degree.
The following corollary of Theorem \ref{fabulous} provides a partial converse to Observation \ref{big-implies-small}, transferring finiteness of small Ramsey degree to that of internal big Ramsey degree in ultraproducts of $\DD$-trending sequences, assuming CH.
\begin{corollary}[CH]\label{CHcorollary} 
Let $\LL$, $\DD$, $(\MM_i)_{i \in \omega}$ and $\fMM^*$ be as in the statement of Theorem \ref{fabulous}, and let $\AAA \in \age(\fMM^*)$. Suppose $\AAA$ has finite small Ramsey degree in $\age(\fMM^*)$. Then $\AAA$ has finite internal big Ramsey degree in $\fMM^*$, and \[ T_\mathrm{int}(\AAA, \fMM^*) \le t(\AAA, \age(\fMM^*)).\] 
\end{corollary}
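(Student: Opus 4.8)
The plan is to derive Corollary \ref{CHcorollary} directly from Theorem \ref{fabulous} by choosing $\fNN$ to be $\fMM^*$ itself, exploiting the fact that $\fMM^*$ is its own age-provider and that under CH it has exactly the right cardinality for the theorem's hypothesis. First I would set $\fNN := \fMM^*$ and note that $\age(\fMM^*) = \K$ trivially satisfies the containment hypothesis $\K \subseteq \bigcup\{\age(\MM_t) : t \in \omega\}$, since Observation \ref{agecontain} gives equality here (using that $(\MM_t)_{t\in\omega}$ is $\DD$-trending). The key cardinality check is that $\fNN = \fMM^*$ has size at most $\aleph_1$: by Fact \ref{sizec}, if $\fMM^*$ is infinite it has cardinality $2^{\aleph_0}$, and under CH we have $2^{\aleph_0} = \aleph_1$, so the size hypothesis of Theorem \ref{fabulous} is met. (If $\fMM^*$ is finite the statement is degenerate, so I would dispense with that case at the outset.)

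Next I would apply Theorem \ref{fabulous} with this choice of $\fNN$. Writing $d := d(\AAA, \age(\fMM^*))$, which is finite by the hypothesis that $\AAA$ has finite small Ramsey degree in $\age(\fMM^*)$, the theorem yields that for every $k \in \omega$ the partition relation $\fMM^* \lrawi (\fMM^*)^\AAA_{k,d}$ holds. This is precisely the condition in Definition \ref{internalbRd} witnessing that $\AAA$ has finite internal big Ramsey degree in $\fMM^*$ with the witnessing bound $\ell = d$. Since $T_\mathrm{int}(\AAA, \fMM^*)$ is defined as the \emph{smallest} such $\ell$, we conclude immediately that $T_\mathrm{int}(\AAA, \fMM^*) \le d = d(\AAA, \age(\fMM^*))$, giving the desired inequality.

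I do not anticipate a genuine obstacle here: this corollary is essentially a specialization of the main theorem, and the only content beyond unwinding definitions is the cardinality computation, where CH is exactly what converts the $2^{\aleph_0}$ bound of Fact \ref{sizec} into the $\aleph_1$ bound that Theorem \ref{fabulous} requires of $\fNN$. The one point deserving a sentence of care is confirming that $\AAA$, as an element of $\age(\fMM^*)$, is legitimately a finite substructure of $\fMM^*$ (so that the internal-coloring machinery of Section \ref{intercolo} applies to it), which follows directly from the definition of age together with the observation that copies of $\AAA$ in $\fMM^*$ exist. Everything else is a direct invocation of Theorem \ref{fabulous} and the definitions.
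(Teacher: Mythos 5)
Your proposal is correct and matches the paper's own proof exactly: the paper likewise takes $\fNN = \fMM^*$ in Theorem \ref{fabulous}, citing Observation \ref{agecontain}, Fact \ref{sizec} and CH for the hypotheses. (One small simplification: the finite case you set aside cannot actually occur, since the $\DD$-trending hypothesis forces $\fMM^*$ to be infinite by \los's Theorem.)
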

\begin{proof} By Theorem \ref{sizec} and the Continuum Hypothesis, we can
take $\fNN = \fMM^*$ in Theorem \ref{fabulous}, so the conclusion follows by applying Observation \ref{agecontain} and Theorem \ref{fabulous}.
$\eop_{\ref{CHcorollary}}$
\end{proof}

\section{Big Ramsey degrees for internal colorings: general case}\label{general case}
We now show that the results from the previous section, let us call it the countable case, hold more generally in the case of ultraproducts over an ultrafilter on an arbitrary infinite cardinal $\kappa$. 
Although the countable case can of course  be obtained as a special case of the general one, we decided to give a direct exposition of the countable case from the previous section first, to improve readability.

In the general case of Theorem \ref{fabulous} we shall rely on the following new notion to replace that of $\DD$-trending:

\begin{definition}\label{ctbly_conforming} Let $\kappa$ be an infinite cardinal and let $\DD$ be a countably incomplete ultrafilter over $\kappa$. Suppose that $\KK$ is the age of an infinite structure in a finite relational signature such that the countably many isomorphism types from $\KK$ are enumerated in a sequence $(\AAA_n)_ {n \in \omega}$ satisfying the following conditions
\begin{enumerate}
    \item $\AAA_n \in \KK$, for all $n \in \omega$,
    \item $\AAA_n \hookrightarrow \AAA_{n+1}$, for all $n \in \omega$,
    \item for every $\AAA \in \KK$, there exists $n \in \omega$ such that $\AAA \hookrightarrow \AAA_n$.
\end{enumerate}
  A sequence $(\BB_i)_{i \in \kappa}$ is said to be \textit{$\DD$-countably-conforming-from-$\KK$, as witnessed by $(X_n)_{n \in \omega}$} if 
there is some sequence of sets $(X_n)_{n \in \omega}$ witnessing that $\DD$ is countably
incomplete and such that for every $i \in \kappa$, we have
$\BB_i :=\AAA_n$ for the first $n$ such that $i \notin X_n$.
\end{definition}

\begin{observation}\label{agecontain2} Suppose that $\LL$ is a finite relational signature. Then there is sequence of finite structures $(\BB_i)_{i \in \kappa}$ as in Definition \ref{ctbly_conforming} and which is definable from $\DD$, $(\AAA_n)_{ n \in \omega}$, and $(X_n)_{n \in \omega}$.

Let $\fMM^*:=\prod_{i \in \kappa} \BB_i/\DD$.  Then $\age(\fMM^*)=\KK:=\bigcup \{\age(\BB_i) : i \in \kappa \}$.
\end{observation}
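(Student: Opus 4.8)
The plan is to verify the two assertions separately: first that $(\BB_t : t \in \kappa)$ is genuinely a function of the listed data, and then the threefold equality, which I would split into the combinatorial identity $\bigcup_{t}\age(\BB_t)=\KK$ and the model-theoretic identity $\age(\fMM^*)=\KK$, the latter proved via \los's Theorem exactly as in Observation \ref{agecontain}.

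For well-definedness, recall (cf.\ Remark \ref{nonp_ctbl}) that in this paper a sequence $(X_n)_{n\in\omega}$ witnessing countable incompleteness satisfies $X_n\in\DD$ and $\bigcap_n X_n=\emptyset$, so that the complements satisfy $\bigcup_n\ov{X}_n=\kappa$. First I would observe that the ``layers'' $P_n:=\ov{X}_n\setminus\bigcup_{m\in n}\ov{X}_m$ are pairwise disjoint and satisfy $\bigcup_n P_n=\bigcup_n\ov{X}_n=\kappa$ (a standard disjointification). Hence each $t\in\kappa$ lies in exactly one $P_n$, namely the one whose index $n(t)$ is least with $t\notin X_{n(t)}$; this makes $\BB_t:=\AAA_{n(t)}$ unambiguous and manifestly determined by $(\AAA_n)$, $(X_n)$ and $\DD$ alone.

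For the equality $\bigcup_{t\in\kappa}\age(\BB_t)=\KK$, the inclusion $\subseteq$ is immediate: each $\BB_t$ equals some $\AAA_n\in\KK$, and $\KK$, being an age, is hereditary, whence $\age(\BB_t)\subseteq\KK$. For $\supseteq$, take $\AAA\in\KK$; by the cofinality clause of Definition \ref{ctbly_conforming} there is $n$ with $\AAA\hookrightarrow\AAA_n$. The crucial point is the identity $\{t:n(t)\ge n\}=\bigcap_{m\in n}X_m$, a finite intersection of members of $\DD$ and hence itself in $\DD$ (in particular nonempty, as $\DD$ is proper). Picking any such $t$ and chaining the embeddings from the chain clause gives $\AAA\hookrightarrow\AAA_n\hookrightarrow\AAA_{n(t)}=\BB_t$, so $\AAA\in\age(\BB_t)$. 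Finally, for $\age(\fMM^*)=\KK$ I would mimic Observation \ref{agecontain}: for $\KK\subseteq\age(\fMM^*)$, given $\AAA\in\KK$ with $\AAA\hookrightarrow\AAA_n$, the same set $\bigcap_{m\in n}X_m\in\DD$ is contained in $\{t:\BB_t\vDash\exists\ov{x}\,\theta_\AAA(\ov{x})\}$ (using Remark \ref{strongerfortheta}), so by \los's Theorem $\fMM^*\vDash\exists\ov{x}\,\theta_\AAA(\ov{x})$ and $\AAA\in\age(\fMM^*)$; the reverse inclusion $\age(\fMM^*)\subseteq\bigcup_t\age(\BB_t)$ is immediate from \los's Theorem and uses nothing about the $X_n$, since $\fMM^*\vDash\exists\ov{x}\,\theta_\AAA(\ov{x})$ forces $\{t:\BB_t\vDash\exists\ov{x}\,\theta_\AAA(\ov{x})\}\in\DD$ to be nonempty.

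I expect the only genuinely delicate point to be the replacement of the ``$\DD$ contains all cofinite sets'' step from the $\omega$ case by the finite-intersection property: one must recognize that $\{t:n(t)\ge n\}=\bigcap_{m\in n}X_m\in\DD$, so that \emph{every} isomorphism type of $\KK$ (not merely cofinally many) is realized on $\DD$-many coordinates. A secondary bookkeeping point, needed already for well-definedness, is to confirm that the witnessing sequence has $\bigcap_n X_n=\emptyset$; otherwise the layers $P_n$ would fail to cover $\kappa$ and $\BB_t$ would be undefined for $t\in\bigcap_n X_n$.
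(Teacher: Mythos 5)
Your proposal is correct and follows essentially the same route as the paper's proof: disjointify the $\ov{X}_n$ into layers, show that each tail $\{t : \BB_t = \AAA_s \text{ for some } s \geq n\}$ lies in $\DD$, and combine the chain property of $(\AAA_n)_{n\in\omega}$ with \los's Theorem. The only cosmetic difference is that you identify this tail directly as the finite intersection $\bigcap_{m \in n} X_m \in \DD$ (using just the filter axioms), whereas the paper reaches the same conclusion by noting that the complementary set $\bigcup_{m \in n} \ov{X}_m$ is a finite union of non-members and hence not in the ultrafilter; the set in question and the overall argument structure are identical.
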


\begin{proof} Since $(X_n)_ {n \in \omega}$ witnesses the countable incompleteness of $\DD$, each $X_n \in \DD$ and $\bigcap_{n \in \omega} X_n = \emptyset$.  So, the sequence $(\BB_i)_ {i \in \kappa}$ is well-defined.

For the second claim, $\age(\fMM^*) \subseteq \KK$ and $\KK = \bigcup \{\age(\BB_i) : i\in \kappa \}$ are clear from arguments in Section \ref{omegacase}.  To see that $\KK \subseteq \age(\fMM^*)$, we fix some notation.  Let $Y_n :=\bigcap_{m \in n} X_m \setminus X_n$. Since each $X_n\in \DD$, we have 
$\kappa \setminus X_n \notin \DD$, so $Y_n \notin \DD$.  
However,
$\bigcup_{n \in \omega} Y_n = \bigcup_{n \in \omega} (\kappa\setminus X_n) = \kappa \in \DD$. In particular, for every $n$, we have $T_n:=\bigcup_{m \ge n} Y_m\in \DD$,
because $\DD$ is an ultrafilter.  

We now note that $\bigcup \{\age(\BB_i) : i \in \kappa \}=\bigcup  \{\age(\AAA_n) : n \in \omega\}$. On the other hand, 
given $\AAA\in \KK$, there is $n<\omega$ such that $\AAA \hookrightarrow \AAA_n$ and hence $\AAA \hookrightarrow \AAA_m$ for all $m\ge n$.  Then $\AAA_n \hookrightarrow \BB_i$ for all $i \in T_n$ and $T_n \in \DD$.  This shows that $\KK \subseteq \age(\fMM^*)$.
$\eop_{\ref{agecontain2}}$
\end{proof}

\begin{remark} If we specialize to the case where $\kappa=\omega$, $\DD$ is any nonprincipal ultrafilter on $\omega$, and $X_n :=\{m \in \omega : n < m\}$, for $n \in \omega$, then $(X_n)_{n \in \omega}$ witnesses the countable incompleteness of $\DD$.  Moreover, $Y_n = \{ n \}$, in the notation of Observation \ref{agecontain2}. Thus, any sequence $(\BB_i)_{i \in \omega}$ of finite structures in a finite relational signature
 that is $\DD$-countably-conforming-from-$\KK$ witnessed by $(X_n)_{n \in \omega}$ is actually $\DD$-trending, by Observation \ref{canbedone}.
\end{remark}

\begin{theorem}\label{fabulous_general}
Suppose that $\kappa$ is an infinite cardinal, $\DD$ a $\kappa^+$-good countably incomplete ultrafilter over $\kappa$ and
$\LL$ a finite relational signature. Further, let $\KK_1$ be an age of $\LL$-structures.

Let $(\MM_i)_{i\in\kappa}$ be a sequence of finite $\LL$-structures $\DD$-countably-conforming-from-$\KK_1$ (witnessed by some sequence $(X_n)_{n \in \omega}$)  and define $\fMM^*:=\prod_{i \in \kappa} \MM_i/\DD$. 

Let $\fNN$ be an $\LL$-structure of cardinality $\kappa^+$ such that
$\KK:=\age(\fNN) \subseteq \bigcup \{\age(\MM_i) : i \in \kappa \}$

Fix $\AAA \in \K$ and suppose that $\AAA$ has finite small Ramsey degree in $\K$, denoted $t := t(\AAA, \K)$. Then for any $k \in \omega$, the partition relation $ \fMM^* \lrawi \bigl( \fNN \bigr)^\AAA_{k, t}$ holds.
\end{theorem}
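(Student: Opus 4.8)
The plan is to follow the proof of Theorem \ref{fabulous} essentially line by line, making three substitutions: the index set $\omega$ is replaced by $\kappa$; the $\DD$-trending hypothesis is replaced by the $\DD$-countably-conforming-from-$\KK_1$ hypothesis, whose consequences for ages are recorded in Observation \ref{agecontain2} in place of Observation \ref{agecontain}; and the saturation input is upgraded. (The linear order $\prec$ and the quantifier-free formulas $\theta_\AAA$ set up in Section \ref{prelim} depend only on having an ultrafilter, not on the index set, so they transfer unchanged.) For the saturation, since $\DD$ is countably incomplete and $\kappa^+$-good and the finite signature $\widehat{\LL}$ introduced below satisfies $|\widehat{\LL}| < \kappa^+$, Theorem \ref{maximal_saturation} applied with the cardinal $\kappa^+$ shows that $\widehat{\fMM^*}$ is $\kappa^+$-saturated; this is exactly the saturation needed to embed a structure of size $\kappa^+$.

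First I would re-run the construction of Proposition \ref{main_extracted}. Given an internal $k$-coloring $c^*$ determined by $Z \in \DD$ and $\sC = \{c_t : t \in Z\}$, I expand each $\MM_t$ to $\widehat{\MM}_t$ by interpreting a new $n$-ary function symbol $f$ and new constants $e_0,\dots,e_{k-1}$ exactly as there. The sole use of $\DD$-trending in that proof is to secure $\{t : |M_t| \ge k\} \in \DD$; here this follows from Observation \ref{agecontain2}, because the tails $T_n \in \DD$ and, as $\KK_1$ is the age of an infinite structure, the cofinal chain $(\AAA_n)_{n\in\omega}$ of Definition \ref{ctbly_conforming} satisfies $|\AAA_n| \to \infty$, so $\{t : |M_t| \ge k\} \supseteq T_N \in \DD$ once $|\AAA_N| \ge k$. \los's Theorem then yields, for every copy $\AAA'$ of $\AAA$ and enumeration $\ov b$ of $A'$, that $f^{\widehat{\fMM^*}}(\ov b) = e_j^{\widehat{\fMM^*}}$ if and only if $c^*(\AAA') = j$.

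Next I would reproduce the combinatorial heart unchanged. For each $\BB \in \K$ I pick $\CC_\BB \in \K$ with $\CC_\BB \lraw (\BB)^\AAA_{k,d}$; Observation \ref{agecontain2} gives $\K \subseteq \age(\fMM^*)$, so $X_\BB := \{t : \CC_\BB \hookrightarrow \MM_t\} \in \DD$ by \los's Theorem, and the formulas $\varphi_{\BB,S}$ and $\varphi_\BB := \bigvee_{S \subseteq k,\, |S|=d} \varphi_{\BB,S}$ satisfy $\widehat{\fMM^*} \vDash \varphi_\BB$. Choosing for each $\BB$ a witness $S(\BB)$ with $\fMM^* \vDash \varphi_{\BB,S(\BB)}$, I would apply Observation \ref{canbedone} to $\fNN$ — which still applies verbatim, since $\age(\fNN)$ has only countably many isomorphism types even though $|N| = \kappa^+$ — to get a chain $(\BB_i)_{i\in\omega} \subseteq \K$ cofinal in $\K$, and then pigeonhole to fix a single size-$d$ set $S_0 \subseteq k$ with $G := \{i : S(\BB_i) = S_0\}$ infinite.

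Finally, taking $N = \kappa^+$ without loss of generality, I would form the partial $\widehat{\LL}$-types $\Gamma(\la x_i : i \in \kappa^+ + k\ra)$ (the atomic diagram of $\fNN$ together with $x_{\kappa^+ + s} = e_s$) and $\Sigma(\la x_i : i \in \kappa^+\ra)$ (asserting that every increasing copy of $\AAA$ takes a color in $S_0$), exactly as in Theorem \ref{fabulous}. Because the $e_s$ are constants of $\widehat{\LL}$, the combined type has empty parameter domain, so $|\Domp(\Gamma \cup \Sigma)| = 0 < \kappa^+$ while $|\ov x| = \kappa^+$; Lemma \ref{technical}, applied with the cardinal $\kappa^+$, then realizes $\Gamma \cup \Sigma$ in the $\kappa^+$-saturated $\widehat{\fMM^*}$, once finite satisfiability is verified. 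The latter is checked as before: a finite fragment names a finite $\BB \subseteq \fNN$, which embeds into some $\BB_i$ and, choosing $j \in G$ with $i \le j$, into $\BB_j$; since $S(\BB_j) = S_0$, the copy of $\BB_j$ supplied by $\varphi_{\BB_j, S_0}$ together with the interpreted constants satisfies the fragment. Restricting a realization to its first $\kappa^+$ coordinates gives the desired copy $\fNN'$ with $\binom{\fNN'}{\AAA}$ $d$-chromatic. The one genuinely delicate point is this matching of saturation level to cardinality: the type has $|\ov x| = |N| = \kappa^+$ variables, the boundary case permitted by Lemma \ref{technical}, and $\kappa^+$-saturation at exactly this level is what $\kappa^+$-goodness of $\DD$ buys us via Theorem \ref{maximal_saturation}; relaxing $\kappa^+$-goodness would be the obstacle, as it would break this realization step.
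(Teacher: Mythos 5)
Your proposal is correct, and its skeleton --- the expansion of Proposition \ref{main_extracted}, the formulas $\varphi_{\BB,S}$, the types $\Gamma \cup \Sigma$, and realization via Theorem \ref{maximal_saturation} at level $\kappa^+$ together with Lemma \ref{technical} --- is the paper's. But at the one step where the paper departs from Theorem \ref{fabulous}, you go the other way. You re-apply Observation \ref{canbedone} to $\fNN$ to obtain a countable cofinal chain $(\BB_i)_{i\in\omega}$ in $\age(\fNN)$ and pigeonhole over $\{S(\BB_i) : i \in \omega\}$, exactly as in the $\omega$-case. The paper explicitly declines to do this (``Instead of working with the sequence $(\BB_i : i \in \omega)$\ldots we choose to substitute the sequence $(\MM_t : t \in \kappa)$''): it defines $S(\MM_t)$ for every $t \in \kappa$, partitions $\kappa$ into finitely many pieces $P_r = \{t : S(\MM_t) = S_r\}$, and pigeonholes over the countably many tails $T_n \in \DD$ from the proof of Observation \ref{agecontain2}; in its finite-satisfiability check, a finite $\BB \subseteq \fNN$ embeds into $\MM_t$ for all $t$ in some tail, and an index $t^* \in T_{m^*} \cap P_{r_0}$ is chosen so that $\MM_{t^*}$ plays the role your $\BB_j$ plays. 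Both routes are valid: parts (1)--(3) of Observation \ref{canbedone} need only the JEP and the fact that $\age(\fNN)$ has countably many isomorphism types, which the finite relational signature guarantees regardless of $|N| = \kappa^+$ (the ``$\DD$-trending'' rider of that observation, which is tied to $\omega$, is not needed). Your version buys uniformity: the general theorem becomes a verbatim lift of Theorem \ref{fabulous} with exactly two inputs swapped, Observation \ref{agecontain2} for Observation \ref{agecontain} and $\kappa^+$-saturation for $\aleph_1$-saturation. The paper's version instead shows that the countably-conforming sequence itself supplies the cofinal family on which to pigeonhole, which is arguably the point of introducing Definition \ref{ctbly_conforming}. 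One detail in your favor: you explicitly patch the single use of $\DD$-trending inside Proposition \ref{main_extracted} --- securing $\{t : |M_t| \ge k\} \in \DD$ from the tails $T_n$ and the growth of the sizes $|\AAA_n|$ along the chain --- a point the paper's terse proof leaves implicit.
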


\begin{proof} We know that $\KK_1 = \age(\fMM^*)$ by Observation \ref{agecontain2}.
Thus,
Theorem \ref{maximal_saturation} together with Lemma \ref{technical} can be used to follow a similar argument as in Theorem \ref{fabulous}.  Up until equation \eqref{1main}, where we conclude that ``$\fMM^* \vDash \varphi_{\BB,S(\BB)}$'', the argument is the same, except that $\{i \in \kappa : C_{\BB} \hookrightarrow \MM_i\} \in \DD$ follows from Observation \ref{agecontain2}.  Instead of working with the sequence $(\BB_i)_{i \in \omega}$ as in Theorem \ref{fabulous}, now we choose to substitute the sequence $(\MM_i)_{i\in \kappa}$.  Let $Y_n:=\bigcap_{m \in n} X_m \setminus X_n$ and define $T_n:=\bigcup_{n \leq i} Y_i\in \DD$ as in Observation \ref{agecontain2}.  There is some integer $\ell$ such that $(S_r)_{r \in \ell}$ enumerates the finite subsets of $k$ of size $t$.  In analogy with the map $\BB \mapsto S(\BB)$, we have a map $\MM_i \mapsto S(\MM_i)$ such that 
$$\fMM^* \vDash \varphi_{\MM_i,S(\MM_i)}$$
that yields a finite partition $(P_r)_{r\in \ell}$ of $\kappa$ as follows: 
$$P_r:= \{i \in \kappa : S(\MM_i)=S_r\}.$$
For every $n \in \omega$, the tail $T_n \subseteq \kappa$ intersects at least one piece of the partition, choose one, $P_{r(n)}$, so we have a map $n \mapsto r(n)$ from $\omega$ into $\ell$.  By the pigeonhole principle, there exists $r_0 \in \ell$ such that $\{n \in \omega: r(n)=r_0\} \subseteq \omega$ is infinite, and thus, is cofinal in $\omega$.

Write the same types $\Gamma \cup \Sigma$, but in $\kappa^+$-many variables, and substitute the set $S_{r_0}$ for the set $S_0$ named in the proof for Theorem \ref{fabulous}.  In the same way, we are left with a finite structure
$\BB$ associated to an arbitrary finite subtype, $\Gamma_0$.  Instead of writing ``$\BB \hookrightarrow \BB_i \hookrightarrow \BB_m$'' we argue in the following way:

\vspace{.1in}
\noindent For every $\BB \in \age(\fNN) \subseteq \KK_1$, there exists some $n^* \in \omega$ such that $\BB \hookrightarrow \MM_i$, for all $i \in T_{n^*}$ (for example, take $n^*$ such that $\BB=\AAA_{n^*}$, as in the proof of Observation \ref{agecontain2}). In addition, there exists $m^* \ge n^*$ such that $r(m^*)=r_0$, by cofinality in $\omega$.  However, $\BB \hookrightarrow \MM_i$ for all $i \in T_{m^*} \subseteq T_{n^*}.$  Now take any $i^* \in T_{m^*} \cap P_{r(m^*)} \neq \emptyset$, this index plays the role of $m$ at the end of Theorem \ref{fabulous}.  Since $i^* \in P_{r(m^*)}$ and $r(m^*)=r_0$, we know that $S(\MM_{i^*}) = S_{r(m^*)}=S_{r_0}$.  Thus, there is a copy $\MM'_{i^*} \cong \MM_{i^*}$ in $\fMM^*$ whose copies of $\AAA$ in $\fMM^*$ take on only colors in $S_{r_0}$ under $c^*$.  Moreover, since $i^* \in T_{m^*}$, $\BB \hookrightarrow \MM_{i^*}$, as desired.  
$\eop_{\ref{fabulous_general}}$
\end{proof}

\begin{corollary}[GCH]\label{GCHcorollary} 
Let $\kappa$, $\LL$, $\DD$, $(\MM_t)_{t \in \kappa}$, $\KK_1$ and $\fMM^*$ be as in the statement of Theorem \ref{fabulous_general}, and let $\AAA \in \KK_1$. Suppose $\AAA$ has finite small Ramsey degree in $\age(\fMM^*)$. Then $\AAA$ has finite internal big Ramsey degree in $\fMM^*$, and \[ T_\mathrm{int}(\AAA, \fMM^*) \le t(\AAA, \age(\fMM^*)).\] 
\end{corollary}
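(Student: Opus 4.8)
The plan is to deduce this exactly as Corollary \ref{CHcorollary} was deduced from Theorem \ref{fabulous}, namely by specializing Theorem \ref{fabulous_general} to the case $\fNN = \fMM^*$. The only hypotheses of Theorem \ref{fabulous_general} that need checking for this choice are the cardinality constraint $|\fNN| = \kappa^+$ and the age containment $\age(\fNN) \subseteq \bigcup \{\age(\MM_t) : t \in \kappa\}$; everything else is inherited verbatim from the statement of Theorem \ref{fabulous_general}.

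First I would verify the age condition. By Observation \ref{agecontain2} we have $\age(\fMM^*) = \KK_1 = \bigcup \{\age(\MM_t) : t \in \kappa\}$, so taking $\fNN = \fMM^*$ the required inclusion holds with equality, and moreover the hypothesis $\AAA \in \KK_1$ places $\AAA$ in $\K := \age(\fNN)$, as needed. Since $\KK_1$ is by hypothesis the age of an infinite structure, it contains finite structures of unboundedly large cardinality, whence $\fMM^*$ is infinite and Fact \ref{sizegc} is applicable.

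Next I would settle the cardinality. Because $\DD$ is a $\kappa^+$-good countably incomplete ultrafilter over $\kappa$ and each $\MM_t$ is finite with $\fMM^*$ infinite, Fact \ref{sizegc} gives $|\fMM^*| = 2^\kappa$. Invoking GCH, $2^\kappa = \kappa^+$, so $\fNN = \fMM^*$ has cardinality exactly $\kappa^+$, matching the hypothesis of Theorem \ref{fabulous_general}.

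With both structural hypotheses verified and $d := d(\AAA, \age(\fMM^*))$ finite by assumption, Theorem \ref{fabulous_general} yields $\fMM^* \lrawi (\fMM^*)^\AAA_{k, d}$ for every $k \in \omega$. By Definition \ref{internalbRd} this is precisely the statement that $\AAA$ has finite internal big Ramsey degree in $\fMM^*$ with $T_\mathrm{int}(\AAA, \fMM^*) \le d$, completing the argument. I do not anticipate a genuine obstacle: the proof is a bookkeeping verification that the two hypotheses of Theorem \ref{fabulous_general} hold for $\fNN = \fMM^*$, the only mildly delicate point being the cardinal arithmetic $2^\kappa = \kappa^+$ under GCH, together with the confirmation that $\fMM^*$ is infinite so that Fact \ref{sizegc} legitimately applies.
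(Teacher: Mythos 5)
Your proposal is correct and takes essentially the same route as the paper: the paper's proof is the one-line instruction to take $\fNN = \fMM^*$ in Theorem \ref{fabulous_general}, applying Fact \ref{sizegc} and GCH, which is exactly your argument. Your additional verifications (the age equality via Observation \ref{agecontain2}, the infinitude of $\fMM^*$, and the cardinal arithmetic $2^\kappa = \kappa^+$) are precisely the bookkeeping the paper leaves implicit.
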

\begin{proof}
Take $\fNN = \fMM^*$ in Theorem \ref{fabulous_general}, applying Fact \ref{sizegc} and the Generalized Continuum Hypothesis.  
$\eop_{\ref{GCHcorollary}}$
\end{proof}

\section{The case of linear orders under CH}\label{eta1}

All infinite ultraproducts of a countable sequence of finite linear orders are elementarily equivalent and are of the form
	\begin{eqnarray}\label{ult1}
	\omega+\sum_{\mathfrak{L}}(\omega^*+\omega)+\omega^*,
	\end{eqnarray}
	where $\omega^*$ is the reverse of the natural linear order on $\omega$ and $\mathfrak{L}$ is an $\aleph_1$-saturated dense linear order without endpoints. This fact appears in many resources; for a proof, see e.g. \cite{Garcia}.

	In this section, we focus on the concrete example of linear orders with the additional assumption of CH. 
	Under CH, by Theorem \ref{uniquemodel}, there is a unique up to isomorphism $\aleph_1$-saturated dense linear order without endpoints and of cardinality $\aleph_1$, which we denote by $\fEE$.  Thus, under CH, there is a unique up to isomorphism infinite ultraproduct $\mathfrak{L}^*$ of a countable sequence of finite linear orders, since this $\fEE$ must play the role of the ``spine'' in \eqref{ult1}:
	\begin{eqnarray}\label{ult2}
	\mathfrak{L}^*=\omega+\sum_{\fEE}(\omega^*+\omega)+\omega^*.
	\end{eqnarray}
	One way to see this is to note that by 
	Theorem \ref{sizec}, $\mathfrak{L}^*$ is of size $\aleph_1$, and thus the order $\mathfrak{L}$ in \eqref{ult1} is not only $\aleph_1$-saturated and dense without endpoints, but of size $\aleph_1$.
	
	A basic observation is that every copy of $\mathbb{Q}$ in $\mathfrak{L}^*$ takes at most one point in any copy of $\omega^*+\omega,$ and thus is in fact a copy of $\mathbb{Q}$ in $\fEE$.  We will use this fact in the proofs of both Corollary \ref{maincounterexample} and Corollary \ref{maincounterexample2}, which are counterexamples to Corollary \ref{CHcorollary} in the case that the colorings are not internal.
	
	\subsubsection*{Trees}
	Just as the rational linear order $\mathbb{Q}$ can be represented on the binary tree $2^{<\omega}$ with a modified lexicographic order, under CH we can view $\fEE$ as the full binary tree $2^{<\omega_1}$ with analogously defined linear order. 
	To do so, we introduce some notation around trees.  	A \emph{tree} is a partially ordered set $(T,\leq)$ such that for every $t\in T,$ the set of predecessors of $t$, $\text{pred}(t)=\{s\in T:s< t\}$ is well ordered. For $t\in T,$ we denote by $|t|$ the \emph{height} of $t$, that is, the order type of $\text{pred}(t)$. The height of $T$, denoted by $\text{ht}(T),$ is $\text{sup}_{t\in T} |t|.$ For an ordinal $\alpha\leq \text{ht}(T)$, the $\alpha$-th level of $T$, is $T(\alpha)=\{t\in T: |t|=\alpha\}.$
	If $T$ has a minimal element, it is referred to as the \emph{root} of $T$ and is denoted by $\text{root}(T)$. A \emph{terminal node} of $T$ is a node without a successor. 
	A \emph{subtree} of $(T,\leq)$ is any subset $U\subseteq T$  with the induced partial order. Notice, that any subtree is automatically a tree. A subset $X\subseteq T$ is an \emph{antichain}, if its elements are pairwise incomparable.

	 In what follows, we will be working with the full binary rooted tree $(2^{<\omega_1},\sqsubseteq)$, where $s\sqsubseteq t$ if $s$ is an initial segment of $t$. Finite sequences such as $\la 0, 1\ra$ may be denoted by $01$ for clarity.  We denote by $\left<\right>$ the empty sequence, which is the root of $2^{<\omega_1}.$ Recall that for $s \in 2^{<\omega_1}$, $|s|$ denotes the length of $s$ as a sequence, which coincides with the height of $s$ in $2^{<\omega_1}$. For every $s$ and $t$ in $2^{<\omega_1}$, there exists a unique greatest lower bound of $s$ and $t$ denoted by $s\wedge t$, which is the longest common initial segment of $s$ and $t$. By a subtree of $2^{<\omega_1}$, we will always mean a rooted subtree. A subtree $T$ of $2^{<\omega}$ is \emph{perfect} if it has no terminal nodes.

	 Let  $<_{\text{lex}}$ denote the usual lexicographical order on $\sqsubseteq$-incomparable elements $s, t \in 2^{<\omega_1}$, i.e. $s<_{\textrm{lex}} t$ if $(s \wedge t)^\smallfrown  0  \sqsubseteq s$ and $(s \wedge t)^\smallfrown  1  \sqsubseteq t$.  We define a linear order $<_E$ on $2^{<\omega_1}$ by $s\leq_E t$ if either $s=t$, or $s\sqsubseteq t$ and $s^{\smallfrown}1\sqsubseteq t$, or $t\sqsubseteq s$ and $t^{\smallfrown}0\sqsubseteq s$, or $s$ and $t$ are $\sqsubseteq$-incomparable and $s<_{\text{lex}}t.$  Note that $<_{\text{lex}}$ and $<_E$ coincide on any antichain in $2^{<\omega_1}.$  It is known that $(2^{<\omega_1},<_E)$ is an $\aleph_1$-dense linear order without endpoints, so we have the following under CH:
	 \begin{proposition}[CH]
	$(2^{<\omega_1},<_E) \cong \fEE$. 
	 \end{proposition}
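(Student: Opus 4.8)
The plan is to invoke the uniqueness theorem for countable-saturated dense linear orders, Theorem~\ref{uniquemodel}, applied to the structure $(2^{<\omega_1},<_E)$ and the canonical $\fEE$. By hypothesis (and by construction) $\fEE$ is the unique up-to-isomorphism $\aleph_1$-saturated dense linear order without endpoints of cardinality $\aleph_1$. So it suffices to verify that $(2^{<\omega_1},<_E)$ shares these three structural features: it has cardinality $\aleph_1$, it is a dense linear order without endpoints, and it is $\aleph_1$-saturated (equivalently, under CH and by the quoted remarks, an $\aleph_1$-dense linear order — one in which between any two points, and below/above any point, there sit $\aleph_1$-many further points). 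Since $\fEE$ is \emph{defined} as the unique such order under CH, once I confirm $(2^{<\omega_1},<_E)$ falls into this class, Theorem~\ref{uniquemodel} delivers the isomorphism immediately.

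First I would confirm the cardinality: $|2^{<\omega_1}| = \aleph_1$, since $2^{<\omega_1} = \bigcup_{\alpha<\omega_1} 2^\alpha$ and each level $2^\alpha$ for $\alpha<\omega_1$ is at most of size $2^{|\alpha|} = 2^{\aleph_0} = \aleph_1$ under CH (and for finite $\alpha$ it is finite), so the union of $\aleph_1$-many sets each of size $\le \aleph_1$ has size $\aleph_1$. Second, I would check the linear-order axioms for $<_E$: that it is a total order (the four-case definition is exhaustive and antisymmetric by the uniqueness of $s\wedge t$ and the fact that comparable or incomparable is a dichotomy), and that it is dense without endpoints. Density and no endpoints follow by the standard argument for tree orders: given $s <_E t$, one can always find a node strictly between them by extending appropriately past $s\wedge t$ (e.g.\ inserting a node that branches at a level deeper than both), and there is no greatest or least element because any node admits strictly $<_E$-larger and $<_E$-smaller extensions.

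The substantive step — and the main obstacle — is the $\aleph_1$-saturation, i.e.\ the $\aleph_1$-density: I must show that for any pair $a <_E b$ from $2^{<\omega_1}$ (or for any one-sided cut determined by $<\aleph_1$-many parameters) there are $\aleph_1$-many points strictly between them, so that every $1$-type over a set of size $<\aleph_1$ is realized. The delicacy is that $2^{<\omega_1}$ is the tree of \emph{countable} sequences, so a cut specified by countably many parameters corresponds to a branch of countable length, and I must check that $<_E$ provides a point filling that cut — this is exactly where the height $\omega_1$ (rather than $\omega$) is essential, since a cut determined by a countable increasing-and-decreasing pair of parameter sequences is realized by the appropriate node at a successor or limit level strictly below $\omega_1$. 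I would argue that because each parameter is a sequence of countable length, the relevant meet $\bigwedge$ of the parameters lies at a countable level $\gamma<\omega_1$, and an extension of length $\gamma+1$ branching the right way sits strictly inside the cut; moreover there are $\aleph_1$-many such extensions available (one for each of the $\aleph_1$-many admissible continuations into higher levels), giving $\aleph_1$-density.

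Once these three verifications are in place, the conclusion is a one-line appeal: $(2^{<\omega_1},<_E)$ is an $\aleph_1$-saturated dense linear order without endpoints of size $\aleph_1$, hence by Theorem~\ref{uniquemodel} it is isomorphic to $\fEE$. The only real work is the saturation/$\aleph_1$-density estimate; the order-theoretic axioms and the cardinality count are routine. I would present the saturation argument carefully since it is the crux, and treat the remaining items briefly.
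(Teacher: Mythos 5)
Your skeleton is exactly the paper's: under CH, $\fEE$ is the unique $\aleph_1$-saturated dense linear order without endpoints of cardinality $\aleph_1$ (Theorem \ref{uniquemodel}), so it suffices to check that $(2^{<\omega_1},<_E)$ is such an order; the paper asserts precisely this (calling it ``$\aleph_1$-dense'') and concludes. Your cardinality count under CH and the density/no-endpoints verifications are fine. The problem is the crux, saturation, where your argument has a genuine gap, in two respects. First, your parenthetical gloss of $\aleph_1$-density as ``between any two points, and below/above any point, there sit $\aleph_1$-many further points'' is \emph{not} equivalent to $\aleph_1$-saturation. Under CH a set of reals in which every interval has $\aleph_1$ points has that property, but it is never $\aleph_1$-saturated: an $\aleph_1$-saturated order without endpoints contains an increasing $\omega_1$-chain (build it by recursion, realizing at each countable stage the type of an upper bound), and $\omega_1$ does not embed into $\mathbb{R}$. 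So for the interval-counting notion, uniqueness fails and Theorem \ref{uniquemodel} cannot be invoked. What is actually needed is Hausdorff's $\eta_1$ property: for all countable $Y <_E Z$ (either possibly empty) there is a point strictly between them; for a dense linear order without endpoints this is equivalent to $\aleph_1$-saturation.

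Second, your recipe for filling such a cut --- ``the relevant meet of the parameters lies at a countable level $\gamma$, and an extension of length $\gamma+1$ branching the right way sits strictly inside the cut'' --- fails as stated. Let $y_n$ be the sequence of $n$ ones, let $t$ be the sequence of $\omega$ ones followed by a single $0$, and consider the cut $Y=\{y_n : n\in\omega\}$, $Z=\{t\}$ (one checks $y_n <_E y_{n+1} <_E t$ for all $n$). Every pairwise meet of parameters is one of the $y_n$, so all meets sit at finite levels; yet a node $x$ satisfies $y_n <_E x$ for all $n$ only if $x$ extends the limit node $\bigcup_n y_n$ of length $\omega$, and then $x <_E t$ forces $x$ to extend $t^\smallfrown\langle 0\rangle$, which lies at level $\omega+2$. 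Hence no ``one-step extension of a meet'' fills this cut, and locating the filler requires information (the limit of the increasing side) that no meet of parameters carries. In general, proving the $\eta_1$ property of $(2^{<\omega_1},<_E)$ requires a transfinite construction along the branch that the cut determines (or, equivalently, embedding $s\mapsto s^\smallfrown\langle 1\rangle^\smallfrown\langle 0,0,\dots\rangle$ into the complete order $(2^{\omega_1},<_{\mathrm{lex}})$ and analyzing the supremum of the lower side), with the countability of the parameters serving only to keep the construction below level $\omega_1$. That construction is the real content of the proposition, and it is the piece missing from your proposal.
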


	\subsection{Some history around $\eta_1$ and partition properties}\label{6.1}

The linear order $\fEE$ introduced above is an $\eta_1$
set as introduced by Hausdorff (see pg. 488 of \cite{Hausdorffeta}). More generally, if $\alpha$ is an ordinal, an $\eta_{\alpha}$ set is a linearly ordered set $X$ such that for every two subsets $Y,Z$ of cardinality less than $\aleph_{\alpha}$ with every element in $Y$ below every element of $Z,$ there is $x\in X$ that is above everything in $Y$ and below everything in $Z$. There is unique $\eta_0$ set of size $\aleph_0,$ which is the order type of the rationals, $(\mathbb{Q},\leq)$, and it is typically referred to as the order type $\eta$. For $\alpha\geq 1$, an $\eta_{\alpha}$ set of cardinality $\aleph_{\alpha}$ may or may not exist, depending on the additional set theoretic axioms. If it exists, then it is unique up to isomorphism.

	For a cardinal $\kappa$ satisfying $\kappa^{<\kappa}=\kappa$, one can define J\'onsson limits of classes of structures of size $<\kappa$, in analogy to Fra\"iss\'e limits, giving a J\'onsson limiting structure of size $\kappa$, see Chapter IV of \cite{ComfortNeg974} for an exposition. Under CH, $\aleph_1^{<\aleph_1}=\aleph_1$ and the order $\fEE$ is the J\'onsson limit of countable linear orders. Hence it is in particular an $\aleph_1-$universal and $\aleph_1$-homogeneous linear order of size $\aleph_1$ and it is, moreover, saturated.

There is much literature on partition relations for uncountable cardinals, and we will highlight only a few references here.
In \cite{MR1556708} it is shown by Sierpi\'{n}ski that 
$2^{\aleph_0} \not \rightarrow (\aleph_1)^2_2$ and in
\cite{Kurepa59} by Kurepa that
$2^\kappa \not \rightarrow (\kappa^+)^2_2$.
This work sets the foundation for a theory of big Ramsey degrees for infinite cardinals. In Theorem 17A of \cite{EHR} it is shown by Erd\H{o}s, Hajnal and Rado that, under GCH, \\
$\aleph_{\alpha+1} \not \rightarrow (\aleph_{\alpha+1})^2_{\aleph_{\alpha+1}}$,
by a method assuring that subsets of size $\aleph_{\alpha+1}$ contain pairs from every piece of the partition.  Galvin and Shelah in \cite{GalvinShelah73} and Todor\v{c}evi\'{c} in \cite{Tod87} give a history of contemporary developments that have influenced their ZFC results which we review below.  More recently, the paper \cite{MaSo21} gives an account of big Ramsey degrees for all countable ordinals.  

It is shown in \cite{GalvinShelah73} that 
$2^{\aleph_0} \not \rightarrow (2^{\aleph_0})^2_{\aleph_0}$, and the witnessing coloring is such that if $2^{\aleph_0}$ is regular, then in fact subsets of size $2^{\aleph_0}$ have pairs from every piece of the partition. This will be useful in our Corollary \ref{maincounterexample}.
In \cite{Tod87} it is shown that there exists a coloring $d: {[\omega_1]}^2 \rightarrow \omega_1$ such that for any uncountable set $D \subseteq \omega_1$, $d''[D]^2=\omega_1$.

In the following, we will explain the consequences for $\mathfrak{L}^*$.  Since we are ultimately working under CH, we could refer to any of the results above in \cite{EHR},\cite{GalvinShelah73},\cite{Tod87} for the following Lemma \ref{omega1}, but since the lemma can be stated without the hypothesis of CH, we do so.  Given an infinite linear order $\mathfrak{L}$ and $m \in \omega$, we  identify $[\mathfrak{L}]^m$ with increasing $m$-tuples from $\mathfrak{L}$, which in turn may be identified with ${\fLL \choose \AAA}$, where $\AAA$ is the unique linearly ordered structure with $|A| = m$.  In this way we may say the ``big Ramsey degree of $m$-tuples in $\fLL$'' to refer to the big Ramsey degree of $\AAA$ in $\fLL$.

	\begin{lemma}[\cite{Tod87}]\label{omega1} For every $m \geq 2$, $m$-tuples have infinite big Ramsey degree in $(\omega_1,\in)$.
	\end{lemma}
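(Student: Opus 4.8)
The plan is to derive everything from the single coloring $d\colon [\omega_1]^2 \to \omega_1$ of Todor\v{c}evi\'{c} quoted just above, which has the property that $d''[D]^2 = \omega_1$ for every uncountable $D \subseteq \omega_1$. The key preliminary observation is that the copies of $(\omega_1,\in)$ inside $(\omega_1,\in)$ are exactly the uncountable subsets $D \subseteq \omega_1$: every such $D$ is well-ordered of uncountable order type $\leq \omega_1$, hence of order type $\omega_1$, while every countable subset has order type $< \omega_1$. Under the identification of $\binom{(\omega_1,\in)}{\AAA}$ with $[\omega_1]^m$ (where $\AAA$ is the $m$-element linear order), a copy $D$ contributes the family $[D]^m$, so the task reduces to controlling how many colors a coloring of $[\omega_1]^m$ is forced to take on $[D]^m$ for each uncountable $D$.

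First I would reduce the case of general $m$ to the pair coloring $d$. Define $d_m\colon [\omega_1]^m \to \omega_1$ by $d_m(\{a_0 < a_1 < \cdots < a_{m-1}\}) := d(\{a_0,a_1\})$, i.e. read off the color of the two least elements. I claim $d_m''[D]^m = \omega_1$ for every uncountable $D$. Indeed, fix $\xi \in \omega_1$; since $d''[D]^2 = \omega_1$ there is a pair $a_0 < a_1$ in $D$ with $d(\{a_0,a_1\}) = \xi$. Because $D$ is uncountable it is unbounded in $\omega_1$ (bounded subsets being countable), so there are uncountably many elements of $D$ above $a_1$; choosing any $m-2$ of them as $a_2 < \cdots < a_{m-1}$ yields an $m$-element subset of $D$ whose two least elements are still $a_0, a_1$, whence $d_m$ assigns it $\xi$. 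Thus every value is attained, as required.

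Finally I would convert this into the failure of every finite bound. Fix an arbitrary $\ell \in \omega$ and set $k := \ell+1$; choose any surjection $\pi\colon \omega_1 \to k$ and put $c := \pi \circ d_m$, a $k$-coloring of $[\omega_1]^m$. For every uncountable $D$ we have $c''[D]^m = \pi''\bigl(d_m''[D]^m\bigr) = \pi''\omega_1 = k = \ell+1$, so $\binom{D}{\AAA}$ is not $\ell$-chromatic by $c$. Since the copies of $(\omega_1,\in)$ are precisely the uncountable $D$, no copy witnesses $\ell$-chromaticity, and therefore $\omega_1 \not\lraw (\omega_1)^\AAA_{\ell+1,\ell}$. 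As $\ell$ was arbitrary, no single $\ell$ can serve as a big Ramsey degree, so by Definition \ref{bigRamseydeg} the $m$-tuples have infinite big Ramsey degree in $(\omega_1,\in)$. The only point needing care---and the main obstacle---is the extension step for $m > 2$, where one must ensure the prescribed pair remains the two least elements of the enlarged tuple; this is exactly where the unboundedness of uncountable subsets of $\omega_1$ is used.
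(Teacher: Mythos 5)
Your proof is correct and is essentially the paper's own argument: both take Todor\v{c}evi\'{c}'s coloring $d$ on pairs, color an $m$-tuple by the $d$-value of its two least elements, and project $\omega_1$ onto $k$ colors (the paper uses the specific surjection that fixes values below $k$ and sends the rest to $0$) to defeat any proposed finite degree $\ell < k$. You merely make explicit two details the paper leaves implicit---that copies of $(\omega_1,\in)$ are exactly the uncountable subsets, and that the unboundedness of an uncountable $D$ lets the chosen pair remain the two least elements of the extended $m$-tuple---which is a welcome tightening rather than a different route.
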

	
	\begin{proof} 
	Fix a coloring $d: {[\omega_1]}^2 \rightarrow \omega_1$ as in \cite{Tod87} such that for any uncountable set $D \subseteq \omega_1$, $d''[D]^2=\omega_1$.  For any $k, m \in \omega$, and set $A \in {[\omega_1]}^m$ listed as $\ov{a}$ in increasing order, define $c(A):=d(\{a_0,a_1\})$, if $d(\{a_0,a_1\}) \in k$ and $c(A):=0$, otherwise.  Thus, $d$ induces a $k$-coloring $c: {[\omega_1]}^m \rightarrow k$ such that for any uncountable set $D \subseteq \omega_1$, $c''[D]^m=k$.  
	$\eop_{\ref{omega1}}$
	\end{proof}

By Corollary \ref{CHcorollary}, the colorings witnessing the result in Corollary \ref{maincounterexample} are necessarily external, as otherwise the big Ramsey degree of $m$-tuples in the ultraproduct would be bounded by $d=1$, by the Finite Ramsey Theorem.

	\begin{corollary}[CH]\label{maincounterexample}
		For every $m \geq 2$, $m$-tuples have infinite big Ramsey degree in  $\mathfrak{L}^*$.
	\end{corollary}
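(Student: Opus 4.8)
The plan is to establish the following \emph{unavoidability} statement and then read off the conclusion directly: for every $k \in \omega$ there is a $k$-coloring $c_k^*$ of $\binom{\mathfrak{L}^*}{\AAA}$, where $\AAA$ denotes the $m$-element linear order, such that on every copy $\fLL'$ of $\mathfrak{L}^*$ inside $\mathfrak{L}^*$ the coloring $c_k^*$ attains all $k$ colors. Granting this, fix any candidate bound $\ell \in \omega$ and apply the statement with $k := \ell+1$: every copy of $\mathfrak{L}^*$ attains $\ell+1 > \ell$ colors under $c_{\ell+1}^*$, so no copy is $\ell$-chromatic and hence $\mathfrak{L}^* \lraw (\mathfrak{L}^*)^\AAA_{\ell+1,\ell}$ fails. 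As $\ell$ was arbitrary, no finite $\ell$ witnesses a big Ramsey degree, i.e.\ $m$-tuples have infinite big Ramsey degree in $\mathfrak{L}^*$.

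All of the combinatorial weight is carried by Lemma \ref{omega1}, which supplies, for each $k$, a coloring $c_k \colon [\omega_1]^m \raw k$ with the strong property that $c_k''[D]^m = k$ for \emph{every} uncountable $D \subseteq \omega_1$. What remains is a transfer to $\mathfrak{L}^*$, and here CH enters only through cardinality: by Fact \ref{sizec} we have $|\mathfrak{L}^*| = 2^{\aleph_0}$, so under CH $|\mathfrak{L}^*| = \aleph_1$ and I may fix a bijection of underlying sets $\psi \colon \mathfrak{L}^* \raw \omega_1$ (no order-preservation is required). Recalling the identification of $\binom{\mathfrak{L}^*}{\AAA}$ with $[\mathfrak{L}^*]^m$, I define the pulled-back coloring by $c_k^*(F) := c_k(\psi'' F)$ for each $m$-element subset $F \subseteq \mathfrak{L}^*$.

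To check unavoidability, fix any copy $\fLL'$ of $\mathfrak{L}^*$. Since $\fLL' \cong \mathfrak{L}^*$ it has cardinality $\aleph_1$, so $D := \psi'' \fLL'$ is uncountable in $\omega_1$; by the defining property of $c_k$ we get $c_k''[D]^m = k$, and transporting back along $\psi$ shows that $c_k^*$ attains all $k$ colors on $\binom{\fLL'}{\AAA}$. This is exactly the unavoidability required above, completing the argument.

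I expect no serious obstacle in the transfer step, and I want to flag precisely why: the only property of $\mathfrak{L}^*$ being used is that it---and therefore each of its isomorphic subcopies---has cardinality $\aleph_1$, hence is uncountable; the linear-order structure and the $\eta_1$ spine play no role for this particular statement. If one prefers a coloring expressed through the order structure, one can instead work on the spine $\fEE$: the spine of any copy $\fLL'$ is a dense suborder, so by the observation preceding the corollary it meets each $\omega^*+\omega$ block in at most one point and thus projects to an uncountable subset of $\fEE$, to which $c_k$ may be applied via a set bijection $\fEE \leftrightarrow \omega_1$. This refinement is unnecessary here but is the viewpoint that becomes essential for Corollary \ref{maincounterexample2}. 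The genuine depth of the result is entirely inherited from Todor\v{c}evi\'{c}'s theorem as recorded in Lemma \ref{omega1}.
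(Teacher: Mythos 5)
Your proposal is correct, and it rests on the same key lemma as the paper---the coloring supplied by Lemma \ref{omega1}, which attains all $k$ colors on every uncountable subset of $\omega_1$---but your transfer step is genuinely different from, and more elementary than, the paper's. The paper first reduces to the spine $\fEE$ (using the observation that dense suborders meet each $\omega^*+\omega$ block in at most one point), identifies $\fEE$ with $(2^{<\omega_1},<_E)$, and pulls Todor\v{c}evi\'{c}'s coloring back along the height map $a\mapsto|a|$, assigning color $0$ to tuples with repeated heights; the fact it then needs is that every copy $\fEE'$ of $\fEE$ realizes uncountably many heights. You instead pull back along an arbitrary bijection $\psi\colon\fLL^*\to\omega_1$, so the only fact you need is that copies of $\fLL^*$ have cardinality $\aleph_1$, which is immediate; and since every $m$-subset of a linear order is a copy of $\AAA$, the pulled-back coloring really is a coloring of $\binom{\fLL^*}{\AAA}$ and the argument closes. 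As to what each route buys: yours is more general (it shows that \emph{any} linear order of cardinality $\aleph_1$ has infinite big Ramsey degrees for $m$-tuples, with all colors appearing on every uncountable suborder, not merely on copies), and it also sidesteps a point the paper's proof glosses over---the parenthetical justification there (``$H$ is uncountable since $E'$ is uncountable'') is incomplete as stated, because under CH a single level of $2^{<\omega_1}$ already has $\aleph_1$ nodes of equal height, so uncountability of $E'$ alone does not yield uncountably many heights; one really needs that a copy of $\fEE$ contains an $\omega_1$-chain, which cannot be confined to countably many levels of the tree. What the paper's route buys, in turn, is a coloring defined from the order/tree structure itself rather than from an arbitrary well-ordering of $\fLL^*$, and this is precisely the machinery (heights, $<_E$, antichains in $2^{<\omega_1}$) that the finer Corollary \ref{maincounterexample2} requires; as you correctly note, for the present corollary that structure plays no role.
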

\begin{proof}
It suffices to prove the result for $\fEE$ instead of $\fLL^*$. Fix $m, k \in \omega$ such that $m, k \geq 2$.  It suffices to show that for such an arbitrary $k$ there exists a $k$-coloring $f: [E]^m \rightarrow k$ such that for any $\fEE' \subseteq \fEE$ such that $\fEE' \cong \fEE$, $f'' [\fEE']^m = k$.

Identify $\fEE$ with $(2^{<\omega_1},<_{E})$ as before.  
Let $c: {[\omega_1]}^m \rightarrow k$ be as in the proof of Lemma \ref{omega1}.  For any $A \in [2^{<\omega_1}]^m$, define $f(A)=c(\{|a| : a \in A\})$, if $|a| \neq |b|$ for all $a\neq b$ such that $a,b \in A$, and $f(A) = 0$, otherwise.  Let $\fEE' \subseteq \fEE$ be any copy of $\fEE$ (in the signature $\{<\}$) and let $H:=\{|a| : a \in \fEE'\}$.  We have that $H \subseteq \omega_1$ is uncountable (since $\fEE'\subseteq 2^{<\omega_1}$ is uncountable and $<_E$ and $<_{\text{lex}}$ (which is not saturated) agree on the antichains of  $2^{<\omega_1}$). Thus $c''[H]^m = k$.  By definition of $H$, for every set $B \in [H]^m$, there exists a set $A \in [\fEE']^m$ such that $\{|a| : a \in A \}=B$.  Thus $k = c''[H]^m \subseteq f''[\fEE']^m$, as desired.
$\eop_{\ref{maincounterexample}}$
\end{proof}

\subsection{Witnessing Devlin types in all copies of $\eta$ in $\mathfrak{L}^*$}\label{6.2}

We will show in Corollary \ref{maincounterexample2} that the coloring of finite tuples of the Cantor tree $2^{<\omega}$ by Devlin embedding types as in \cite{Vuksanovic} and \cite{MR2603812}, extended to $2^{<\omega_1}$, witnesses the optimal failure of Corollary \ref{CHcorollary} in the case that colorings are allowed to be external.

	\subsubsection*{Devlin types}
	
	For an $n$-tuple $A$ in $2^{<\omega_1}$, we denote by $A^{\wedge}$ the meet subtree 
	\[
	A^{\wedge}=\{s\wedge t:s,t\in A\}. 
	\]
	Note that if $A$ is the set of terminal nodes of $A^{\wedge}$ of size $n,$ then $A^{\wedge}$ has $2n-1$ nodes.  
	We include the following definitions of (Devlin) embedding types from \cite{MR2603812} (Definition 6.11 and Definition 6.19) in $2^{<\omega_1}$ for completeness.

	\begin{definition}
		Let $A,B$ be finite subsets of $2^{<\omega_1}$. We say that $A$ and $B$ have the same embedding type in $2^{<\omega_1}$ if there is a  tree isomorphism $f:A^{\wedge}\to B^{\wedge}$ that sends $A$ to $B$, respects height, i.e. $|s|<|t|$ if and only if  $|f(s)|<|f(t)|$ for every $s,t\in A^{\wedge}$, and $t(|s|)=f(t)(|f(s)|)$ whenever $|s|<|t|.$
	\end{definition}
    
    Clearly, having the same embedding type is an equivalence relation on $n$-tuples of $2^{<\omega_1}$, for every finite $n.$
	
	\begin{definition}
		A finite set $A\subseteq 2^{<\omega_1}$ of size $n\geq 1$ realizes a Devlin embedding type if
		\begin{enumerate}
			\item $A$ is the set of terminal nodes of $A^{\wedge}$,
			\item $|s|\neq |t|$ for any $s\neq t$ in $A^{\wedge}$,
			\item $t(|s|)=0$ for all $s,t\in A^{\wedge}$ with $|s|<|t|$ and $s\not\sqsubseteq t$. 
			\end{enumerate}
	\end{definition}
    
    The number of Devlin embedding types for $n\geq 1$ is $T_{2n-1}$, the $(2n-1)$-st tangent number, as shown in \cite{MR2603812}. Devlin proved that these are exactly the big Ramsey degrees of $\eta,$ the order type of rationals.
    
    \begin{theorem}[Devlin \cite{Devlin}]\label{Devlin}
        Let $n\geq 1$, $k\geq 2$, and let $t_n=T_{2n-1}$. Then 
        \[
        \eta\lraw (\eta)^n_{k,t_n}.
        \]
    \end{theorem}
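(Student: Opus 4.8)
The plan is to establish the upper-bound arrow $\eta \lraw (\eta)^n_{k,t_n}$ using the representation $(2^{<\omega}, <_E) \cong \eta$ recorded above, reducing the problem to a Ramsey statement about finite subsets of the binary tree and then feeding it into Milliken's theorem on strong subtrees. First I would fix a $k$-coloring $c$ of $[\eta]^n$ and transport it to a coloring of $n$-element subsets of $2^{<\omega}$. The combinatorial bookkeeping is governed by the meet closure $A^\wedge$ of an $n$-element set $A$, whose embedding type records the tree shape of $A^\wedge$ together with the relative heights and the passing numbers; when $A$ realizes a Devlin embedding type, the $2n-1$ nodes of $A^\wedge$ lie at $2n-1$ distinct levels with vanishing off-branch passing numbers.

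A first reduction shows that one may pass to a diagonal copy $D \cong \eta$ inside $(2^{<\omega}, <_E)$ in which every finite subset realizes a Devlin embedding type (arrange the splitting nodes to occur at distinct levels with $0$ on the off-branch coordinates). Since \cite{MR2603812} establishes that there are exactly $t_n = T_{2n-1}$ Devlin embedding types of size $n$, only these $t_n$ types occur in such a copy. Consequently it suffices to produce a sub-copy of $\eta$ on which $c$ depends only on the embedding type of the $n$-tuple: on that copy $c$ will assume at most $t_n$ values.

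The homogenization step is the heart of the argument and the principal obstacle. Each $A$ realizing a Devlin type sits in a canonical strong-subtree envelope spanning its $2n-1$ levels, inside which $A$ occupies a position fixed by its type. I would color the strong subtrees of $2^{<\omega}$ isomorphic to the full binary strong subtree of height $2n-1$ by the tuple of $c$-colors of the canonically positioned Devlin-type subsets they envelope, and then apply Milliken's strong subtree theorem (see \cite{MR2603812}): any finite coloring of the copies of a fixed finite strong subtree is constant on some infinite strong subtree $S$. This yields an infinite strong subtree $S$ on which the $c$-color of a Devlin-type $n$-set depends only on its embedding type. Milliken's theorem rests in turn on the Halpern--L\"auchli theorem, whose proof supplies the genuinely deep combinatorial input; I would cite it rather than reprove it.

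Finally, inside $S$ — itself tree-isomorphic to $2^{<\omega}$ — I would extract a diagonal copy $D' \cong \eta$ all of whose finite subsets realize Devlin types, exactly as in the first reduction. On $[D']^n$ the coloring then factors through the embedding type, and only the $t_n$ Devlin types appear, so $c$ takes at most $t_n$ values there; the copy $D'$ witnesses $\eta \lraw (\eta)^n_{k,t_n}$. I expect the delicate points to be (i) confirming that a diagonal copy of $\eta$ realizing precisely the Devlin types can be located inside every infinite strong subtree, and (ii) matching the envelope dimension $2n-1$ and the positional data so that Milliken's homogeneity on envelopes transfers cleanly back to the original coloring of $n$-tuples. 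The matching lower bound, i.e.\ that $t_n$ cannot be improved, is a separate construction realizing all $t_n$ types in every copy of $\eta$ and is not needed for the arrow itself.
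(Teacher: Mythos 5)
The paper does not actually prove Theorem \ref{Devlin}; it quotes it from Devlin's thesis and points to \cite{Vuksanovic} and \cite{MR2603812} for the combinatorial proof, so the comparison is against those cited sources. Your outline reconstructs exactly that standard argument — transport the coloring to $(2^{<\omega},<_E)$, homogenize over strong-subtree envelopes of Devlin-type sets via Milliken's theorem (resting on Halpern--L\"auchli), then extract a diagonal antichain copy of $\eta$ realizing only the $t_n$ Devlin embedding types — which is precisely the approach the paper relies on (and whose machinery it reuses in Section \ref{eta1}), with the delicate points you flag being the ones handled in detail in those references.
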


	\subsubsection*{Construction}\label{cons_ce}
	We will build a skew subtree $W$ of $2^{<\omega_1}$ isomorphic to $2^{<\omega_1},$ and thus order isomorphic to $\fEE$ with the order induced by $<_{E}$. The desired copy of $\fEE$ will be obtained as an antichain $X$, a slight tweak of $W$. We will show that every $Y\subseteq X$ that is order isomorphic to $(\mathbb{Q},<)$ realizes every Devlin embedding type.  The technique was introduced in \cite{Vuksanovic} for $2^{<\omega}$. 
	The first $\omega$-levels of $W$, denoted by $W_0$, will be identical to $S$ in the proof of Lemma 6.20 in \cite{MR2603812}. We list its defining properties here: 
	\begin{enumerate}
		\item $\text{root}(W_0)=\left<\right>$,
		\item $|W_0\cap 2^{3n}|=1$ and $|W_0\cap 2^{3n+1}|=|W_0\cap 2^{3n+2}|=0$ for all $n\in \omega$,
		\item $W_0$ is isomorphic to $2^{<\omega}$,
		\item for every $m\in\omega$, if $s,t\in W_0(m)$ and $s<_E t$, then  $|s|<|t|$
		\item for every two natural numbers $m \in n$, $s\in W_0(m),$ and $t\in W_0(n),$ we have $|s|<|t|,$
		\item for any $s\in W_0$ and $t\in 2^{<\omega}\setminus W_0$, if $t\sqsubseteq s$, then $t^{\frown}0\sqsubseteq s$.
		
	\end{enumerate}
	
	 On $\omega$-th level, we run into the difficulty that from now on the levels are as large as the height of the tree. Since we will require that on every level of $2^{<\omega_1}$, $W$ takes  at most one node as in item (2), we cannot achieve that the heights of elements in $2^{<\omega_1}$ that form the $\omega$-th level of $W$ are strictly less than the height of any element in $2^{<\omega_1}$ on $(\omega+1)$-st level of $W$ as in item (5), or that the $<_E$ order determines the height order as in item (4). 
	 We will modify the construction as follows: Every branch through $W_0$ will be extended to some leftmost successor that will serve as the root of a copy of $W_0$ forming the next $\omega$ levels of $W$, and we repeat for limit points that appear, and so on.  Every limit point will be taken care of once but enumerated uncountably many times. Here, by a \emph{limit point} of a subtree $T \subseteq 2^{<\omega_1}$ we mean a point $x\in 2^{<\omega_1} \setminus T$ such that $x \uphp \alpha\in T$ for every $\alpha\in|x|$.  Below, the technical details of the construction are carried out.

	 We fix a bookkeeping bijection $b:\omega_1\times\omega_1\to \omega_1$ such that $b(\alpha,\beta)\geq \alpha$ and proceed by recursion along $\omega_1$. We let $x(0,\beta)=\langle \rangle$ for $\beta\in\omega_1$ and we let $W_0$ be as above. Suppose that we have constructed subtrees $W_{\gamma}$ of $2^{<\omega_1}$ for $\gamma\in \delta \in \omega_1$ and $X_\alpha:=\{x(\alpha,\beta):\beta\in \omega_1\}$ for all $\alpha \in \delta$ such that $W_{\gamma}\subseteq W_{\gamma'}$ for $\gamma \in \gamma'$ and $X_\alpha$ enumerates the limit points of $\bigcup_{\gamma\in\alpha}W_{\gamma}$ in $2^{<\omega_1}$.  Let $\alpha,\beta\in\omega_1$ be such that $b(\alpha,\beta)=\delta.$ Since $\delta=b(\alpha,\beta) \geq \alpha$, $x(\alpha,\beta) \in X_\alpha$ has already been defined.  If $x(\alpha,\beta)$ is in the downward closure of $ \bigcup_{\gamma\in\delta}W_{\gamma},$ we let $W_{\delta}=\bigcup_{\gamma\in\delta}W_{\gamma}.$  Otherwise, we let $W_{\delta}$ be the union of $\bigcup_{\gamma\in\delta}W_{\gamma}$ with the subtree of $2^{<\omega_1}$ rooted at the leftmost successor of $x(\alpha,\beta)$, $x(\alpha,\beta)^\smallfrown \langle 0, \ldots, 0\rangle$, on the level $\omega\cdot\delta$ and isomorphic to $W_0$ according to a $<_{\textrm{lex}}$- and level-preserving map that identifies $\{\omega\cdot\delta+n:n\in\omega\}$ with $\omega$. We let $W=\bigcup_{\gamma\in\omega_1}W_{\gamma}.$

	 Finally, we define $X=\{w^{\frown} 01: w\in W\}.$ Clearly, $X$ is an antichain in $2^{<\omega_1}$ order isomorphic to $\fEE$ and every finite tuple $A\subseteq X$ realizes a Devlin embedding type. (Note that $<_{\text{lex}}$ and $<_E$ coincide on $X,$ since it is an antichain, and its Devlin embedding types are determined by $\sqsubseteq$-isomorphisms preserving height and $<_{\text{lex}}$).

	\begin{lemma}\label{Devlinlemma}
		Every copy of $(\mathbb{Q},<)$ in $X$ contains every Devlin embedding type. 
	\end{lemma}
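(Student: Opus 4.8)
The plan is to reduce the statement to the countable case handled by Vuksanovi\'c's technique. Recall that the construction already guarantees that $X$ is \emph{strongly diagonal}: every finite $A \subseteq X$ realizes a Devlin embedding type, so in particular every $Y \subseteq X$ with $Y \cong (\mathbb{Q},<)$ is strongly diagonal and the Devlin type of each finite subset of $Y$ is well defined. Thus the real content of the lemma is that, for each $n \geq 1$, all $T_{2n-1}$ Devlin types of size $n$ are actually \emph{realized} by $n$-element subsets of $Y$, not merely that each such subset realizes \emph{some} type. So I fix $Y \cong (\mathbb{Q},<)$ together with a Devlin type $\tau$ of size $n$, and aim to produce an $n$-element $A \subseteq Y$ realizing $\tau$.

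First I would pass from $2^{<\omega_1}$ down to $2^{<\omega}$ by a level contraction. Writing $\wh{Y} := \{ s \in W : s^{\frown} 01 \in Y\}$ for the node-set underlying $Y$ and $\wh{Y}^{\wedge}$ for its meet closure, the set $\wh{Y}^{\wedge}$ is a countable, densely branching subtree of $2^{<\omega_1}$ (densely branching because $Y \cong \eta$). Let $L \subseteq \omega_1$ be the countable set of heights occupied by nodes of $\wh{Y}^{\wedge}$, and let $\pi \colon L \to \omega$ be the order isomorphism onto its image. Contracting the inactive levels via $\pi$ sends $\wh{Y}^{\wedge}$ to a strongly diagonal subtree $T^{*} \subseteq 2^{<\omega}$ whose associated antichain $Y^{*}$ is order isomorphic to $Y$, hence to $\eta$. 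The point to verify here is that this contraction preserves embedding types: having the same embedding type depends only on the relative order of the heights of meet-tree nodes and on the passing values $t(|s|)$ at those heights, and both are preserved by an order-preserving relabeling of the active levels. Consequently $A \subseteq Y$ realizes $\tau$ if and only if its image in $Y^{*}$ does, so it suffices to realize $\tau$ inside the countable strongly diagonal copy $Y^{*} \cong \eta$ sitting in $2^{<\omega}$.

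At this stage I would invoke the combinatorial heart of the countable case, namely Vuksanovi\'c's technique as presented in \cite{Vuksanovic} and in \cite{MR2603812} (around Lemma 6.20), which shows precisely that a strongly diagonal copy of $\eta$ in $2^{<\omega}$ realizes every Devlin embedding type; pulling the resulting realization back through $\pi$ then yields the desired $A \subseteq Y$. I expect the main obstacle to be exactly this last step: one must make sure the countable argument is stated (or re-run) for an \emph{arbitrary} strongly diagonal dense copy of $\eta$, rather than only for the single distinguished antichain built in \cite{MR2603812}, since the collapsed copy $Y^{*}$ need not be a sub-antichain of that distinguished set. A useful sanity check that density alone forces types, and that no distinguished antichain is needed, is the case $n=2$: if $Y$ were a strongly diagonal copy of $\eta$ in which every pair had its $<_E$-smaller node of greater height, then restricting the height function to any increasing $\omega$-chain of $Y$ would yield an infinite descending sequence of node-heights, which is impossible, so both size-$2$ types must occur. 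The general case amounts to iterating this kind of density argument along the meet tree of $\tau$, which is what Vuksanovi\'c's method systematizes.
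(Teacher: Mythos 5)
Your reduction breaks at the level-contraction step. You set $L \subseteq \omega_1$ to be the set of heights occupied by nodes of $\wh{Y}^{\wedge}$ and then take ``the order isomorphism $\pi \colon L \to \omega$ onto its image''; no such map exists unless $L$ has order type at most $\omega$, and for a general copy $Y$ of $(\mathbb{Q},<)$ in $X$ it does not. The tree $W$ is spread cofinally through the levels of $2^{<\omega_1}$: it contains the bottom copy $W_0$ (heights below $\omega$) together with copies of $W_0$ rooted at levels $\omega\cdot\delta$ for uncountably many $\delta$. Since $\mathbb{Q} \cong \mathbb{Q}+1+\mathbb{Q}$, one can build $Y \subseteq X$ order-isomorphic to $\mathbb{Q}$ interleaving points from the bottom copy with dense blocks from high copies; for such $Y$, infinitely many active levels lie below some nodes of $\wh{Y}^{\wedge}$, so $L$ has order type strictly greater than $\omega$, and moreover distinct branches of the perfect subtree inside $Y^{\wedge}$ can have different suprema of heights. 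Consequently the contracted tree lives in $2^{<\beta}$ for some countable ordinal $\beta > \omega$, not in $2^{<\omega}$: nodes sit at limit levels and the pathologies you intended to contract away are all still present. This is not a repairable technicality, because taming exactly these ordinal phenomena is the real content of the paper's proof, which (i) prunes the perfect subtree $U \subseteq Y^{\wedge}$ to a perfect $U'$ on which $\sup_0^s = \sup_1^s$ at every node, using well-foundedness of the ordinals (this is the general form of your $n=2$ ``sanity check,'' which you present only as an aside and never develop), (ii) prunes again along a fixed increasing sequence $(\alpha_n)$ converging to the common supremum $\alpha$, so that all branches have the same level-supremum --- only after this step does the relevant level set have order type $\omega$ --- and (iii) extracts a subtree $Z$ admitting a $<_E$-preserving isomorphism with $W_0$ together with an injection of $Z$ back into $Y$, which places realizations of every Devlin embedding type inside $Y$.

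A secondary gap, which you flag yourself: even granting a correct reduction to $2^{<\omega}$, you invoke ``Vuksanovi\'c's technique'' as showing that an \emph{arbitrary} strongly diagonal dense copy of $\eta$ in $2^{<\omega}$ realizes all Devlin types, whereas the cited sources prove this for subsets of a distinguished antichain. The paper does not need the stronger citation precisely because steps (i)--(iii) construct the copy of $W_0$ and the injection into $Y$ explicitly; if you kept your architecture you would have to re-run that construction anyway, at which point the detour through contraction buys nothing.
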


	\begin{proof}
		Let $Y\subset X$ be a countable set that contains a copy of $\mathbb{Q}$. It means that $Y^{\wedge}$ contains a perfect subtree $U$. For $s\in U$ and $i\in \{0,1\}$, let $U_i^s=\{t\in U:s^{\frown} i\sqsubseteq t\}$. Let $\text{sup}_i^s$ denote $\text{sup}\{|t|:t\in U_i^s\}.$ Unlike in the case of $2^{<\omega},$ we may have $\text{sup}_0^s\neq\text{sup}_1^s.$ However, we will find a perfect subtree $U'$ of $U$ satisfying a condition that for every $s\in U',$ $\text{sup}_0^s=\text{sup}_1^s$. Suppose not. Let $s_0\in U$ be such that, without loss of generality, $\text{sup}_0^{s_0} < \text{sup}_1^{s_0}$. Since $U_0^{s_0}$ does not satisfy the desired condition, there is $s_1\in U_0^{s_0}$ such that, without loss of generality, $\text{sup}_0^{s_1}<\text{sup}_1^{s_1}.$ We can continue recursively to construct a strictly decreasing sequence of ordinals $\text{sup}_0^{s_0}>\text{sup}_0^{s_1}>\ldots$. This is impossible and therefore the existence of $U'$ is secured. Since $U$ is binary, for every $s\in U,$ $\text{sup}\{|t|:t\in U \ \&\ t\sqsupseteq s\}=\text{max}\{\text{sup}_i^s:i=0,1\},$ and thus $\text{sup}_i^s$ is the same for every $s\in U'$ and $i=0,1.$ Let $\alpha$ denote this common supremum. As $U'$ is a perfect tree, we can recursively construct a perfect subtree $U''$ such that levels through every branch in $U''$ have the same supremum: Fix an increasing sequence $(\alpha_n)_{n\in\omega}$ of ordinals converging to $\alpha$. We let the root of $U'',$ $r_{\emptyset}$, be the root of $U'$. Since $\text{sup}_i^{r_{\emptyset}}=\alpha$ for $i=0,1,$ we can choose $r_i\in U_i^{r\emptyset}$ with $|r_i|\geq \alpha_1$. We recursively continue the construction for $(\alpha_n)_{n\in\omega}$ to obtain $U''$. Since $U''$ is perfect, we can further prune $U''$ to a perfect subtree $Z$ that satisfies items (1)--(6) with (2) replaced by consecutive levels of $Z$ being at least $3$ levels in $2^{<\aleph_1}$ apart. Clearly, there is an isomorphism $Z\to W_0$ preserving $<_E$. Since $W_0$ contains representatives of all Devlin embedding types (see \cite{MR2603812}, Lemma 2.20), so does $Z$. As in \cite{Vuksanovic}, (a) in the proof of Lemma 0.10, by additional pruning, we can further ensure that there is an injective function $f:Z\to Y$ such that $|f(z)|<|t|$ for every $z,t\in Z$ with $|z|<|t|$ and $z^{\smallfrown}0\sqsubseteq f(z).$ Therefore we can conclude that $Y$ contains tuples realizing every Devlin embedding type.  
		 $\eop_{\ref{Devlinlemma}}$
		\end{proof}
	
	\begin{corollary}[CH]\label{maincounterexample2}
 		Let $L_n$ be a finite linear order of size $n$ and let $t_n=T_{2n-1}$ be the $(2n-1)$-st tangent number. There exists a coloring (necessarily external) $c:{\mathfrak{L}^*\choose L_n}\to \{1,2,\ldots, t_n\}$ such that $c$ takes on all colors on every copy of $\mathbb{Q}$ in $\mathfrak{L}^*.$
		\end{corollary}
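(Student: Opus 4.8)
The plan is to transport the problem to the antichain $X\subseteq 2^{<\omega_1}$ built in the construction of Subsection~\ref{6.2} and then read the coloring off Lemma~\ref{Devlinlemma}. The first step is the reduction to the spine. By the basic observation recorded above, any copy $Q$ of $\mathbb{Q}$ in $\mathfrak{L}^*$ meets each block $\omega^*+\omega$ of the sum $\sum_{\fEE}(\omega^*+\omega)$ in at most one point; moreover $Q$ can contain no point of the initial $\omega$ or of the final $\omega^*$, since those are a well-ordered initial and a reverse-well-ordered final segment while $\mathbb{Q}$ has neither least nor greatest element. Hence collapsing the blocks sends $Q$ bijectively and order-isomorphically to a copy $Q'$ of $\mathbb{Q}$ inside $\fEE$, with exactly one point of $Q$ over each point of $Q'$. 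It therefore suffices to produce a coloring whose first clause governs all $n$-subsets arising this way and which exhausts $\{1,\dots,t_n\}$ on every copy of $\mathbb{Q}$ lying in $\fEE$.

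Next I would fix an order isomorphism $\phi\colon \fEE\to (X,<_E)$, which exists because $X$ was constructed to be an antichain order-isomorphic to $\fEE$. Since $X$ is an antichain, $<_E$ and $<_{\mathrm{lex}}$ agree on it, so the Devlin embedding type of an $n$-element subset of $X$ is a well-defined invariant, and by Theorem~\ref{Devlin} and the count of Devlin types it takes exactly $t_n=T_{2n-1}$ values; fix a bijection of these types with $\{1,\dots,t_n\}$. I then define $c\colon\binom{\mathfrak{L}^*}{L_n}\to\{1,\dots,t_n\}$ by: if the $n$ points of $A$ lie in $n$ distinct blocks, with indices $e_1<_{\fEE}\cdots<_{\fEE}e_n$, let $c(A)$ be the color coding the Devlin embedding type of $\{\phi(e_1),\dots,\phi(e_n)\}\subseteq X$; otherwise set $c(A):=1$. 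This is visibly a well-defined $t_n$-coloring.

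To verify the conclusion, let $Q$ be any copy of $\mathbb{Q}$ in $\mathfrak{L}^*$ and $Q'$ its image in $\fEE$ as above. Each $n$-subset of $Q$ falls under the first clause, and its color is the Devlin type of the corresponding $n$-subset of $\phi(Q')\subseteq X$. Because $\phi$ is an order isomorphism, $\phi(Q')$ is a copy of $\mathbb{Q}$ inside $X$, so Lemma~\ref{Devlinlemma} guarantees that $\phi(Q')$ contains representatives of every Devlin embedding type; hence $c$ attains all of $\{1,\dots,t_n\}$ on the $n$-subsets of $Q$, as required. That $c$ is necessarily external (for $t_n\ge 2$, i.e. $n\ge 2$) is automatic from Corollary~\ref{CHcorollary}: an internal coloring would have internal big Ramsey degree at most $d(L_n,\age(\mathfrak{L}^*))=1$ by the Finite Ramsey Theorem, hence be $1$-chromatic on some copy of $\mathfrak{L}^*$, contradicting that $c$ is surjective on every copy of $\mathbb{Q}$ and a fortiori on copies of $\mathfrak{L}^*$.

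The substantive work is already contained in the construction of $X$ and in Lemma~\ref{Devlinlemma}, so the remaining difficulty is organizational. I expect the main point requiring care to be the interface between the order-theoretic reduction and the tree-theoretic invariant: one must check that the projection $\mathfrak{L}^*\to\fEE$ is order-preserving on the blocks and that transporting a copy of $\mathbb{Q}$ along $\phi$ genuinely yields a subset of $X$ order-isomorphic to $\mathbb{Q}$, so that the hypothesis of Lemma~\ref{Devlinlemma} (which speaks of copies of $\mathbb{Q}$ in $X$) is met and its Devlin-type conclusion can be pulled back to color $Q$.
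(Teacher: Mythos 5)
Your proposal is correct and takes essentially the same route as the paper's proof: reduce every copy of $\mathbb{Q}$ in $\mathfrak{L}^*$ to a copy of $\mathbb{Q}$ in the spine $\fEE$, identify $\fEE$ with the antichain $X$, color $n$-tuples by their Devlin embedding type in $X$, and invoke Lemma \ref{Devlinlemma}. You simply make explicit what the paper's terse proof leaves implicit (the order isomorphism $\phi$, the default color for tuples not landing in distinct blocks, and the externality argument via Corollary \ref{CHcorollary}), all of which is sound.
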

	
	\begin{proof}
		By the proof of Devlin's Theorem  \ref{Devlin} in \cite{MR2603812}, $t_n$ is the number of Devlin embedding types of $n$-tuples. Since $\mathbb{Q}$ is not scattered, every copy of $\mathbb{Q}$ in $\mathfrak{L}^*$ takes at most one element from any copy of $\omega^*+\omega$ in $\fEE$ and is therefore effectively a copy of $\mathbb{Q}$ in $\fEE$. The statement thus follows from Lemma \ref{Devlinlemma}.
		$\eop_{\ref{maincounterexample2}}$
		\end{proof}

	\section{Open problems and further directions}\label{openproblems}
	
	We present a few questions for future exploration.	The first question requires some background on universal minimal flows which we give here.
	Let $G$ be a topological group and $X$ a compact Hausdorff space. A continuous function $\alpha:G\times X\to X$ is a \textbf{$G$-flow} if 
	\begin{enumerate}
		\item $\alpha(e,x)=x$ for any $x\in X$ and $e$ the neutral element of $G$,
		\item $\alpha(gh,x)=\alpha(g,\alpha(h,x))$ for every $g,h\in G$ and $x\in X$.
	\end{enumerate}
	We will write $gx$ in place of $\alpha(g,x).$
	A $G$-flow on $X$ is \textbf{minimal} if $X$ does not contain a nonempty proper closed $G$-invariant subset. A \textbf{homomorphism} between $G$-flows $X$ and $Y$ is a 
	continuous map $\phi:X\to Y,$ such that for every $g\in G$ and $x\in X,$ we have $\phi(gx)=g\phi(x).$ If $\phi$ is onto, we say that $Y$ is a \textbf{quotient} of $X$ and if $\phi$ is bijective, it is called an \textbf{isomorphism}. Ellis showed that up to isomorphism, for every topological group $G$ there is a unique \textbf{universal minimal flow}, $M(G)$, that is, a minimal $G$-flow which has every minimal $G$-flow as a quotient.

	A structure $\fMM$ is called \textbf{$\omega$-homogeneous} if every finite partial isomorphism of $\fMM$ extends to an automorphism of $\fMM$.  The following result due to Pestov sets the stage for our inquiry into the universal minimal flows of automorphism groups of linear orders.  For the general case for countable structures see the  Kechris-Pestov-Todor\v{c}evi\'c correspondence from \cite{KPT}.

	\begin{theorem}[\cite{Pestov98}]
	Let $\fMM$ be an $\omega$-homogeneous linear order. Then $\textrm{Aut}(\fMM)$ is extremely amenable. 
	\end{theorem}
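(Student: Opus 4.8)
The plan is to derive extreme amenability of $G := \textrm{Aut}(\fMM)$ from the finite Ramsey theorem, following the route later systematised by the Kechris--Pestov--Todor\v{c}evi\'c correspondence \cite{KPT}. Recall that $G$ is extremely amenable exactly when its greatest ambit --- the Samuel compactification of $G$ for its natural uniformity --- carries a $G$-fixed point, and, by a standard equivalence, exactly when the following finite oscillation statement holds: for every bounded uniformly continuous $f : G \to [0,1]$, every finite $F \subseteq G$, and every $\varepsilon > 0$, there is $g \in G$ with $|f(hg) - f(g)| < \varepsilon$ for all $h \in F$. I would take this as the starting point, so that the entire burden falls on the finite statement.

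The second step replaces the analytic datum $f$ by a finite coloring of tuples. A bounded uniformly continuous $f$ depends, up to $\varepsilon/3$, only on the finite restriction $g \upharpoonright S$ for some finite $S = \{s_1 < \cdots < s_n\} \subseteq M$; since $g$ is order-preserving, this restriction is coded by the increasing $n$-tuple $(g(s_1),\ldots,g(s_n))$. Thus $f(g) \approx \phi(g(s_1),\ldots,g(s_n))$ for a function $\phi$ on increasing $n$-tuples, and $f(hg) \approx \phi(h(g(s_1)),\ldots,h(g(s_n)))$, where $h$ again sends an increasing tuple to an increasing tuple. Discretising $[0,1]$ into finitely many cells of diameter $< \varepsilon/3$ turns $\phi$ into a finite coloring $c$ of the increasing $n$-tuples of $\fMM$, and the goal becomes purely combinatorial: find one increasing $n$-tuple $\ov{a}$ with $c(\ov{a}) = c(h \cdot \ov{a})$ for every $h \in F$, where $h$ acts coordinatewise.

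The combinatorial input is the finite Ramsey theorem. Here $\omega$-homogeneity does two things: any two increasing $n$-tuples are related by an automorphism (an order-preserving bijection between finite subsets extends to all of $\fMM$), so $G$ is transitive on increasing tuples; and $\fMM$, being infinite and densely ordered, contains finite increasing sequences of every length, to which the partition relation $N \rightarrow (m)^n_k$ applies. Finite Ramsey then yields, inside any sufficiently long increasing finite sequence of $\fMM$, a long sub-sequence all of whose $n$-subsets receive the same $c$-color.

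The step I expect to be the main obstacle is absorbing the action of $F$. The finitely many $h \in F$ preserve no finite suborder of $\fMM$, so a monochromatic finite set produced by Ramsey's theorem need not be mapped by $h$ into a set of the same color, and the identity $c(\ov{a}) = c(h \cdot \ov{a})$ cannot be read off from a single finite coloring. The resolution is to pass to the Ellis enveloping semigroup of the greatest ambit and select a minimal idempotent $p$: the Ramsey property of the \emph{rigid} class of finite linear orders is precisely what forces $c$ to be constant along the ultrafilter $p$, making $p$ a left zero and hence a $G$-fixed point. This is exactly the ``Ramsey and rigid $\Rightarrow$ extremely amenable'' direction of the KPT correspondence, whose proof is this idempotent-ultrafilter upgrade of finite Ramsey to exact invariance. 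Since the finite Ramsey theorem is insensitive to the size of the ambient order, the argument applies verbatim to every $\omega$-homogeneous linear order $\fMM$, not merely to $\mathbb{Q}$, giving the full strength of Pestov's theorem.
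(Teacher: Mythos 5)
The paper does not actually prove this statement---it quotes it directly from \cite{Pestov98}---so your proposal must be measured against the standard Pestov/KPT argument, and there it has a genuine gap: you have paired the wrong class of functions with the wrong side of translation, and the combinatorial target you extract is \emph{false}, not merely hard. In the oscillation criterion, the functions for which one seeks $g$ with $|f(hg)-f(g)|<\varepsilon$ for all $h\in F$ are those extending to the greatest ambit, namely the ones that factor (up to $\varepsilon$) through $g\mapsto g^{-1}\upharpoonright S$ for a finite $S$; the functions factoring through $g\mapsto g\upharpoonright S$---the ones you use in your second step---pair instead with oscillation on $gF$, i.e.\ one must find $g$ with $|f(gh)-f(g)|<\varepsilon$ for all $h\in F$. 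Mixing the two, you arrive at the goal: find an increasing $n$-tuple $\ov{a}$ with $c(\ov{a})=c(h\cdot\ov{a})$ for all $h\in F$. This is false in general: take $\fMM=(\mathbb{Q},<)$, $n=1$, $F=\{h\}$ with $h(q)=q+1$, and $c(q)=\lfloor q\rfloor \bmod 2$; then $c(h(q))\neq c(q)$ for \emph{every} $q$, even though $\textrm{Aut}(\mathbb{Q},<)$ is extremely amenable. Consequently your ``main obstacle'' is not an obstacle to be absorbed by machinery: the Ellis semigroup / minimal idempotent step is being asked to prove a false statement, and, read charitably, it amounts to citing the ``Ramsey and rigid $\Rightarrow$ extremely amenable'' direction of \cite{KPT}, which is essentially the theorem you set out to prove.

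With the correct pairing, the difficulty you flagged evaporates and your own ingredients (discretization, transitivity on increasing tuples via $\omega$-homogeneity, finite Ramsey) suffice. For $f$ depending on $g\upharpoonright S$, one needs small oscillation on $gF$: now $f(gh)$ depends on $(gh)\upharpoonright S$, i.e.\ on the tuple $g(h(\ov{s}))$, where the finitely many tuples $h(\ov{s})$, $h\in F$, are \emph{fixed in advance} and all lie in the fixed finite set $B=\bigcup_{h\in F}h(S)$. Since $\fMM$ is infinite, the Finite Ramsey Theorem yields inside $\fMM$ a set $B'$ with $|B'|=|B|$ all of whose $n$-subsets receive one $c$-color; the unique order isomorphism $B\to B'$ is a finite partial isomorphism of $\fMM$, so by $\omega$-homogeneity it extends to some $g\in\textrm{Aut}(\fMM)$. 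Then every tuple $g(h(\ov{s}))$, $h\in F$, lies in $B'$ and gets the same color, giving oscillation below $\varepsilon$ on $gF$. No enveloping semigroup and no idempotents are needed; the whole point of the correct reduction is that the tuples to be made monochromatic are images of a fixed finite family under a single unknown automorphism, rather than images of an unknown tuple under fixed automorphisms.
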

	
	The following lemma says that extreme amenability behaves well with respect to semi-direct product (actually, more generally, short exact sequences).
	The proof of the lemma is immediate.  
	\begin{lemma}\label{folklore}
		Let $G\cong H\ltimes K$ and suppose that $H$ is extremely amenable. Then $M(G)\cong M(K).$
		\end{lemma}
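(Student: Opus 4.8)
The plan is to prove the equivalent and more precise statement that, if we write $K \trianglelefteq G$ for the \emph{normal} factor of the semidirect product and identify $H$ with a complementary subgroup (so $G = HK$, $H\cap K=\{e\}$, and $G/K\cong H$), then the restriction to $K$ of the $G$-action on $M(G)$ is already the universal minimal $K$-flow. Granting this, the asserted isomorphism $M(G)\cong M(K)$ is realized on the same underlying space: $M(G)|_{K}$ is the universal minimal $K$-flow $M(K)$, and $M(K)$ simply inherits from $M(G)$ the ambient $G$-flow structure. It therefore suffices to show that the minimal $G$-flow $M(G)$, regarded as a $K$-flow, is (A) minimal and (B) universal among minimal $K$-flows.

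For (A) I would use extreme amenability of $H$. Restricting $M(G)$ to the subgroup $H$ gives an $H$-flow, which by extreme amenability has a fixed point $x_0$, so $h x_0 = x_0$ for all $h\in H$. The key computation is that the $G$-orbit of $x_0$ collapses onto its $K$-orbit: writing an arbitrary $g\in G$ uniquely as $g=hk$ with $h\in H$, $k\in K$, normality of $K$ gives
\[
g x_0 = h k x_0 = (hkh^{-1})(h x_0) = (hkh^{-1})\, x_0 \in K x_0,
\]
so that $G x_0 = K x_0$. Since $M(G)$ is $G$-minimal, $\overline{K x_0}=\overline{G x_0}=M(G)$, and hence the $K$-action on $M(G)$ is minimal.

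For (B) I would argue by a back-and-forth between the two universal flows. By (A) the flow $M(G)|_{K}$ is a minimal $K$-flow, so universality of $M(K)$ yields a surjective $K$-homomorphism $\pi\colon M(K)\twoheadrightarrow M(G)|_{K}$. Conversely, I would equip the underlying space of $M(K)$ with a $G$-flow structure extending its $K$-action (this is the crux, discussed below). As a $K$-minimal flow it is then automatically $G$-minimal, so universality of $M(G)$ produces a surjective $G$-homomorphism $M(G)\twoheadrightarrow M(K)$, which in particular is a $K$-homomorphism $q\colon M(G)|_{K}\to M(K)$. The composite $q\circ\pi$ is then a $K$-endomorphism of $M(K)$; since the universal minimal flow is coalescent (every endomorphism of $M(K)$ is an automorphism, a standard fact), $q\circ\pi$ is bijective, forcing $\pi$ to be injective. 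A continuous equivariant bijection between compact Hausdorff flows is an isomorphism, so $\pi\colon M(K)\xrightarrow{\ \cong\ } M(G)|_{K}$, completing the proof.

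The main obstacle is the extension step in (B): endowing $M(K)$ with a compatible $G$-flow structure, equivalently a jointly continuous $H$-action $h\mapsto\sigma_h$ on $M(K)$ satisfying the semidirect-product compatibility $\sigma_h(k\cdot y)=(hkh^{-1})\cdot\sigma_h(y)$. For a \emph{general} $K$-flow no such extension exists, and it is precisely here that both the splitting $G=H\ltimes K$ and extreme amenability of $H$ are needed. Each conjugation $\phi_h\colon k\mapsto hkh^{-1}$ twists $M(K)$ into the $K$-flow $M(K)^{\phi_h}$, and because $M(K)$ is \emph{universal} each such twist is isomorphic to $M(K)$; the only remaining issue is to choose the intertwining homeomorphisms $\sigma_h$ coherently and continuously in $h$, i.e.\ to trivialize the resulting cocycle. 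I expect this trivialization to be supplied by extreme amenability of $H$ via a fixed point in a suitable compact space of admissible intertwiners. Once the extension is in hand every remaining step is formal, which is why the lemma is essentially immediate modulo this point.
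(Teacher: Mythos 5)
The paper offers no proof of this lemma --- it declares the proof ``immediate'' and the statement folklore --- so your proposal must stand on its own, and as written it has two genuine gaps, one of which you acknowledge yourself. The acknowledged one is fatal to route (B) in its current form: to endow $M(K)$ with a compatible $G$-flow structure you must trivialize the cocycle of intertwiners $\sigma_h \colon M(K)\to M(K)^{\phi_h}$, and your hope that extreme amenability of $H$ supplies this ``via a fixed point in a suitable compact space of admissible intertwiners'' is precisely the missing idea, not a routine verification. Extreme amenability produces fixed points only for \emph{continuous} actions on \emph{compact Hausdorff} spaces, and the admissible intertwiners form a subset of the homeomorphism group of $M(K)$, which carries no evident compact Hausdorff topology on which $H$ acts continuously. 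Moreover, since each isomorphism $M(K)\cong M(K)^{\phi_h}$ is unique only up to the (generally large) group of flow automorphisms of $M(K)$, there is no canonical candidate for $\sigma_h$; the object on which automorphisms of $K$ act canonically is the greatest ambit $S(K)$, not $M(K)$, and even there joint continuity in $h$ would need an argument. Until a concrete compact space of intertwiners is exhibited, step (B) restates the problem rather than solving it.

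There is also an unacknowledged non sequitur in (A). Your computation $Gx_0=Kx_0$ (via normality of $K$) is correct and gives $\overline{Kx_0}=\overline{Gx_0}=M(G)$, but a single dense orbit yields only point-transitivity of the $K$-action, not minimality: a compact flow can have a dense orbit while containing proper minimal subflows (the full shift on $2^{\mathbb{Z}}$ has dense orbits and fixed points). Minimality requires \emph{every} $K$-orbit to be dense. Note the contrast with the direct-product case $G=H\times K$: there $\mathrm{Fix}(H)$ is closed and $K$-invariant, so a $K$-minimal subflow of $\mathrm{Fix}(H)$ is $G$-invariant and hence equals $M(G)$, which does prove (A); but in a genuinely twisted semidirect product one has $h(kx_0)=(hkh^{-1})x_0$, so $\mathrm{Fix}(H)$ is not $K$-invariant and this repair is unavailable. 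What your argument actually establishes is that $(M(G),x_0)$ is a $K$-ambit, i.e.\ a factor of $S(K)$ --- a useful start, but strictly weaker than (A). The salvageable pieces are the identity $Gx_0=Kx_0$, the observation that a $G$-flow whose $K$-restriction is minimal is automatically $G$-minimal, and the coalescence endgame (a $K$-endomorphism of $M(K)$ is an automorphism, forcing $\pi$ to be an isomorphism); but both the minimality claim in (A) and the extension step in (B) need new ideas before the proof is complete.
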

	
	As a result of Lemma \ref{folklore} and our analysis of the structure of $\mathfrak{L}^*$, we have the following.
	\begin{theorem}[CH]
	The group	$\textrm{Aut}(\mathfrak{L}^*)$ is isomorphic to
	$\textrm{Aut}(\fEE)\ltimes\mathbb{Z}^{\mathfrak{c}}.$ Consequently, $M(\textrm{Aut}(\mathfrak{L}^*))\cong M(\mathbb{Z}^{\mathfrak{c}}).$
		\end{theorem}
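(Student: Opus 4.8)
The plan is to read off everything from the block decomposition \eqref{ult2}. Define on $\mathfrak{L}^*$ the equivalence relation $x\sim y$ iff only finitely many points lie between $x$ and $y$. Its classes are precisely the summands of \eqref{ult2}: the initial copy of $\omega$, the final copy of $\omega^*$, and, for each $e\in\fEE$, one copy of $\omega^*+\omega$, which is an order-copy of $\mathbb{Z}$. Since $\sim$ is evidently preserved by every order-automorphism, each $g\in\textrm{Aut}(\mathfrak{L}^*)$ induces an automorphism of the condensation $\mathfrak{L}^*/{\sim}$, which is order-isomorphic to $\fEE$ with an added least and greatest element (the $\omega$- and $\omega^*$-summands). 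The first key point I would record is that the $\omega$-class is the unique $\sim$-class possessing a least element and the $\omega^*$-class the unique one possessing a greatest element; these characterizations are automorphism-invariant, so both classes are fixed setwise, and being rigid (well-ordered, respectively reverse-well-ordered) they are fixed pointwise. Hence the induced action on the condensation restricts to an automorphism of $\fEE$, giving a group homomorphism $\pi:\textrm{Aut}(\mathfrak{L}^*)\to\textrm{Aut}(\fEE)$.

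Next I would compute the kernel and split $\pi$. An element of $K:=\ker\pi$ fixes every $\sim$-class setwise; it therefore acts as the identity on the rigid $\omega$- and $\omega^*$-classes and as an order-automorphism, i.e.\ a translation, on each $\mathbb{Z}$-block, with the translations on distinct blocks chosen wholly independently. Thus $K\cong\prod_{e\in\fEE}\textrm{Aut}(\mathbb{Z})\cong\mathbb{Z}^{|E|}=\mathbb{Z}^{\mathfrak c}$, using $|E|=\aleph_1=\mathfrak c$ under CH. For surjectivity and splitting, fix a base point $0_e$ in each block $B_e$ (say the least element of its $\omega$-part) and, given $\sigma\in\textrm{Aut}(\fEE)$, let $s(\sigma)$ fix the $\omega$- and $\omega^*$-classes pointwise and send the $n$-th element of $B_e$ relative to $0_e$ to the $n$-th element of $B_{\sigma(e)}$. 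Because $\sigma$ preserves the order of $\fEE$ and the endpoint classes are fixed, $s(\sigma)$ is an order-automorphism with $\pi\bigl(s(\sigma)\bigr)=\sigma$; and since base points are carried to base points, $\sigma\mapsto s(\sigma)$ is a homomorphism. Therefore $\pi$ splits and $\textrm{Aut}(\mathfrak{L}^*)\cong\textrm{Aut}(\fEE)\ltimes K\cong\textrm{Aut}(E)\ltimes\mathbb{Z}^{\mathfrak c}$.

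To upgrade this to an isomorphism of topological groups, as required for the flow conclusion, I would verify that with the pointwise-convergence topologies all maps are continuous. A basic open set fixes finitely many points of $\mathfrak{L}^*$, hence constrains only finitely many blocks, so the subspace topology that $K$ inherits is exactly the product topology on $\mathbb{Z}^{\mathfrak c}$ with each $\mathbb{Z}$ discrete; moreover $K$ is closed and normal, $s$ is a continuous section, and the multiplication map $K\times s\bigl(\textrm{Aut}(\fEE)\bigr)\to\textrm{Aut}(\mathfrak{L}^*)$ is a homeomorphism. For the consequence I would then apply Lemma \ref{folklore} with $H=\textrm{Aut}(\fEE)$ and $K=\mathbb{Z}^{\mathfrak c}$: it suffices that $H$ be extremely amenable. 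As $\fEE$ is $\aleph_1$-saturated it is in particular $\omega$-homogeneous (every finite partial isomorphism extends to an automorphism, by back-and-forth using density and the absence of endpoints), so Pestov's theorem \cite{Pestov98} gives extreme amenability of $\textrm{Aut}(\fEE)$, and Lemma \ref{folklore} yields $M(\textrm{Aut}(\mathfrak{L}^*))\cong M(K)=M(\mathbb{Z}^{\mathfrak c})$.

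The algebraic decomposition itself is essentially forced, once one observes the rigidity of the two endpoint summands and the mutual isomorphism of all interior $\mathbb{Z}$-blocks; the genuinely delicate steps are, I expect, twofold. First, matching the subspace topology on $K$ with the product topology on $\mathbb{Z}^{\mathfrak c}$ and confirming that the abstract semidirect product is a \emph{topological} one, so that the hypotheses of Lemma \ref{folklore} literally apply. Second, establishing $\omega$-homogeneity of $\fEE$ cleanly enough to license Pestov's theorem. Both are routine but must be stated with care, since the whole flow-theoretic conclusion depends on the decomposition respecting the group topology.
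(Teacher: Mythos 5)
Your proposal is correct and takes essentially the same approach as the paper: the paper deduces this theorem from the block decomposition \eqref{ult2} together with Pestov's theorem and Lemma \ref{folklore}, leaving the verification to the reader, and your argument is exactly that verification (finite condensation, rigidity of the two end blocks, kernel $\mathbb{Z}^{\mathfrak{c}}$ under CH, continuous splitting, and $\omega$-homogeneity of $\fEE$ to invoke Pestov). There is nothing to correct.
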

	
	\noindent This sets the stage for our first question.	
	\begin{question}
		What is the universal minimal flow of $\mathbb{Z}^{\mathfrak{c}}$?
	\end{question}	
	
	\noindent It would be intriguing if an analysis of $M(\textrm{Aut}(\mathfrak{L}^*))$ could lend tractability to this problem.
	
	A different line of inquiry is suggested by the following questions.

	\begin{question}
		What are the big Ramsey degrees of $\mathfrak{L}^*$ under CH with respect to other special types of colorings (such as colorings with certain topological or stability-theoretic properties)?
	\end{question}	

	\begin{question}
		What transfer principles can we obtain without CH?
		\end{question}
	
	\begin{question}
	    What happens if we equip the ultraproduct with a topology and consider continuous colorings of its finitely-generated substructures by the unit interval $[0,1]$?
	\end{question}

\section{Appendix: Saturation for types with many variables}\label{Appendix}

The goal of this appendix is to give a self-contained exposition of the argument for Lemma \ref{technical}.  The argument we give is an infinite version of the proof for Proposition \ref{ckprop} below.

\begin{proposition}[Proposition 2.3.6 of \cite{C-K}]\label{ckprop}  Let $\fAA$ be a structure that is $\aleph_0$-saturated (for 1-types).  For every $n \in \omega$ and for every $n$-type $p$ over a finite parameter set with respect to $\fAA$, $p$ is realized in $\fAA$.
\end{proposition}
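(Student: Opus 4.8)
The plan is to argue by induction on $n$. The base case $n=1$ is exactly the hypothesis that $\fAA$ is $\aleph_0$-saturated for $1$-types (Definition \ref{kappasat} with $\kappa = \aleph_0$, noting that a type over a \emph{finite} parameter set has domain of size less than $\aleph_0$). For the inductive step I would assume the statement for $n$ and take an $(n+1)$-type $p := p(x_0, \ldots, x_n)$ over a finite parameter set $B \subseteq A$. The idea is to realize the variables one at a time: peel off $x_0$ first using saturation for $1$-types, then apply the inductive hypothesis to the remaining $n$ variables.

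First I would form the projection of $p$ onto the single variable $x_0$. Writing $\ov{y} := (x_1, \ldots, x_n)$ and abbreviating $\exists \ov{y}$ for $\exists x_1 \cdots \exists x_n$, set
\[
    p_0(x_0) := \{ \exists \ov{y}\, \psi(x_0, \ov{y}) : \psi \in p \},
\]
a set of $\LL$-formulas with parameters from $B$. Using that $p$ is finitely satisfiable and deductively closed (so $\bigwedge_{\psi \in q}\psi \in p$ for every finite $q \subseteq p$), I would check that $p_0$ is itself a $1$-type over $B$: a finite fragment $\{\exists \ov{y}\,\psi_1, \ldots, \exists \ov{y}\,\psi_m\} \subseteq p_0$ is satisfied by any witness for $\exists x_0 \exists \ov{y} \bigwedge_{i} \psi_i$, and the latter holds in $\fAA$ because $\{\psi_1, \ldots, \psi_m\}$ is a finite subset of the finitely satisfiable type $p$. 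Since $\Domp(p_0) \subseteq B$ is finite, the base case yields some $a_0 \in A$ realizing $p_0$.

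Next I would substitute $a_0$ for $x_0$ throughout $p$, obtaining
\[
    p'(\ov{y}) := \{ \psi(a_0, \ov{y}) : \psi \in p \},
\]
a set of formulas with parameters from the still-finite set $B \cup \{a_0\}$. The crucial point is that $p'$ is finitely satisfiable: for any finite $q \subseteq p$, deductive closure gives $\exists \ov{y}\, \bigwedge_{\psi \in q}\psi(x_0, \ov{y}) \in p_0$, and since $a_0$ realizes $p_0$ this says precisely $\fAA \vDash \exists \ov{y}\, \bigwedge_{\psi \in q}\psi(a_0, \ov{y})$. Hence $p'$ is an $n$-type over a finite parameter set, so by the inductive hypothesis it is realized by a tuple $(a_1, \ldots, a_n)$ from $A$. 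Then $(a_0, a_1, \ldots, a_n)$ realizes $p$, completing the induction.

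I expect the only real subtlety to be the bookkeeping that keeps finite satisfiability intact across the two reductions: verifying that the projection $p_0$ is finitely satisfiable, and, after fixing the witness $a_0$, that the residual type $p'$ remains finitely satisfiable over the enlarged parameter set $B \cup \{a_0\}$. Both reduce to the single observation that realizing the existential projection in $x_0$ supplies exactly the witness needed for the residual finite fragments. This same peeling argument, carried out transfinitely with the length bound $|\ov{x}| \le \kappa$ controlling the recursion and $\kappa$-saturation replacing $\aleph_0$-saturation, is what the appendix iterates to prove Lemma \ref{technical}.
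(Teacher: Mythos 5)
Your proof is correct and takes essentially the same approach the paper relies on: the paper itself only cites \cite{C-K} for Proposition \ref{ckprop}, but its Appendix proof of Lemma \ref{technical} is exactly your variable-peeling argument carried out transfinitely, with your projection $p_0$ and residual type $p'$ corresponding to the paper's $q_\beta$ and the $1$-type $\pi(x_\beta)$ used at successor stages. The one small point worth noting is that $p'$ need not be deductively closed as written, but closing it preserves finite satisfiability, so the inductive hypothesis applies.
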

 
Since we work with types in a transfinite sequence of variables, we use transfinite recursion.
For a reference on transfinite recursion see p. 21-22 of \cite{Jech-ST} or Ch.~III \S 5 of \cite{Kunen-ST}.  For a result that uses transfinite recursion in an analogous way, see Lemma \ref{ckprop2} below.

\begin{lemma}[Lemma 5.1.10 of \cite{C-K}]\label{ckprop2}  Suppose that $\fAA$ is $\alpha$-saturated, $\fAA \equiv \fBB$ and $b \in {B}^\alpha$.  Then there exists an $a \in {A}^\alpha$ such that $(\fAA,a_\xi)_{\xi\in\alpha} \equiv (\fBB,b_\xi)_{\xi\in\alpha}$.
\end{lemma}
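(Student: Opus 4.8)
The plan is to construct the tuple $a=(a_\xi)_{\xi<\alpha}$ coordinate by coordinate by transfinite recursion on $\xi<\alpha$, maintaining throughout the invariant
\[
(*)_\xi:\qquad (\fAA,(a_\eta)_{\eta<\xi})\equiv(\fBB,(b_\eta)_{\eta<\xi}),
\]
i.e.\ that the two structures, expanded by the constants chosen so far, are elementarily equivalent. The base case $(*)_0$ is exactly the hypothesis $\fAA\equiv\fBB$, and the desired conclusion $(\fAA,(a_\xi)_{\xi<\alpha})\equiv(\fBB,(b_\xi)_{\xi<\alpha})$ will follow from the whole family $\{(*)_\xi:\xi<\alpha\}$, because any single first-order sentence in the fully expanded language mentions only finitely many of the new constants and hence is a sentence of $(\fAA,(a_\eta)_{\eta<\xi})$ for some $\xi<\alpha$.

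At a successor stage, assuming $(*)_\xi$, I would let $p_\xi(x)$ be the $1$-type over the parameter set $\{a_\eta:\eta<\xi\}$ consisting of all formulas $\varphi(x,a_{\eta_0},\dots,a_{\eta_{m-1}})$ (with finitely many parameters) such that $(\fBB,(b_\eta)_{\eta<\xi})\vDash\varphi(b_\xi,b_{\eta_0},\dots,b_{\eta_{m-1}})$ --- that is, the complete type realized by $b_\xi$ over $(b_\eta)_{\eta<\xi}$, transported to the $a$-parameters. To see that $p_\xi$ is finitely satisfiable in $\fAA$, I take a finite conjunction $\psi(x,\bar a)$ of its members; then $(\fBB,\dots)\vDash\exists x\,\psi(x,\bar b)$, and since $\exists x\,\psi$ is a sentence of the stage-$\xi$ expanded language, $(*)_\xi$ transfers it to $(\fAA,\dots)\vDash\exists x\,\psi(x,\bar a)$. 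Because $|\{a_\eta:\eta<\xi\}|=|\xi|<\alpha$ (here I use that $\alpha$ is a cardinal), $\alpha$-saturation for $1$-types (Definition \ref{kappasat}) yields an $a_\xi\in A$ realizing $p_\xi$. I then check $(*)_{\xi+1}$: since the $\fBB$-side decides each $\varphi(x,\bar b)$ at $b_\xi$, the type $p_\xi$ is complete over its parameters, so for every formula $\varphi$ the two expansions agree on the truth of $\varphi(a_\xi,\bar a)$ and $\varphi(b_\xi,\bar b)$, which is precisely elementary equivalence of the expansions with the $\xi$-th constant adjoined.

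The limit stages require no new choices: if $\lambda\le\alpha$ is a limit ordinal and $(*)_\xi$ holds for all $\xi<\lambda$, then any sentence over $(a_\eta)_{\eta<\lambda}$ uses constants with indices below some $\xi<\lambda$ and is decided identically on both sides by $(*)_\xi$, so $(*)_\lambda$ holds. This same finite-support observation, applied one final time, delivers the full equivalence at $\alpha$.

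I expect the only real work to be the successor step, and within it the two points that make $\alpha$-saturation both applicable and effective: first, the \emph{cardinality bookkeeping}, namely that at every stage $\xi<\alpha$ the accumulated parameter set has size $|\xi|<\alpha$, so that a $1$-type over it is realized --- this is exactly why an infinite cardinal rather than an arbitrary ordinal is the correct hypothesis, and why adding a single coordinate at a time lets one get by with saturation for $1$-types alone (per the Remark following Definition \ref{kappasat}); and second, verifying that the finitely satisfiable type $p_\xi$ is the right object, i.e.\ that its realization \emph{propagates elementary equivalence} rather than merely consistency, which is where the completeness of $p_\xi$, inherited from the complete theory of the $\fBB$-side expansion, is essential. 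The limit and terminal steps are then automatic from the finitary nature of first-order sentences.
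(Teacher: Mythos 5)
Your proof is correct, and it is the standard argument (essentially the one in Chang--Keisler itself). A point of comparison: the paper does not actually prove Lemma \ref{ckprop2} --- it quotes it from \cite{C-K} and invokes it only as ``a result that uses transfinite recursion in an analogous way'' to the Appendix proof of Lemma \ref{technical} --- so the relevant in-paper proof to compare with is that one. The two recursions share exactly the ingredients you isolate: the cardinality bookkeeping ($|\xi|<\alpha$, resp.\ $|Z\cup A|<\lambda$) that keeps $1$-saturation applicable at every stage, and the finitary character of first-order formulas that makes the limit and terminal stages automatic. The essential difference is what gets realized at a successor stage. You can exploit the ambient structure $\fBB$: the $1$-type of $b_\xi$ over $(b_\eta)_{\eta\in\xi}$, transported to the $a$-parameters, is \emph{complete}, and that completeness is precisely what upgrades mere realization to the elementary-equivalence invariant $(*)_{\xi+1}$. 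In Lemma \ref{technical} there is no second structure to copy from --- the given type $p$ is only finitely satisfiable --- so the paper must weaken your invariant to satisfaction of the partial types $q_\beta$ (existential projections of finite conjunctions of $p$) and realize, at stage $\beta$, the $1$-type $\pi(x_\beta)$ obtained by existentially quantifying away the unassigned variables. Two minor remarks on your write-up: your stage-by-stage choice of $a_\xi$ uses AC, which the paper's analogous proof makes explicit by fixing a well-ordering of the universe so that the recursion is driven by a genuine function $G$ to which the transfinite recursion theorem applies; and the well-definedness of $p_\xi$ as a set of formulas with parameters (conceivably $a_{\eta_1}=a_{\eta_2}$ while $b_{\eta_1}\neq b_{\eta_2}$, which would make $p_\xi$ contradictory) is itself a consequence of $(*)_\xi$, since $c_{\eta_1}=c_{\eta_2}$ is a sentence of the stage-$\xi$ expanded language; this deserves an explicit sentence.
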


\vspace{.1in}

We start with a restatement of the lemma followed by a proof.

\begin{ShLemma}\label{technical2+}Let $\lambda$ be an infinite cardinal and $\fMM$ a $\lambda$-saturated structure.  Suppose $p$ is a type over $M$ in free variables $\ov{x}$ with respect to $\fMM$  
such that $|\ov{x}| \leq \lambda$
and such that $|\Domp(p)|< \lambda$. Then $p$ is realized in $\fMM$.
\end{ShLemma}

\begin{proof} If $\lambda \in \aleph_0$, then we may follow the proof of Proposition \ref{ckprop} above.  So let us assume that $\lambda \geq \aleph_0$.
Let $X = (x_i)_{i\in \lambda}$ be a sequence of distinct variables.  Fix a $\lambda$-saturated structure $\fMM$ and a type $p(X)$ such that for some set $A \subseteq M$, $\textrm{Dom~} p = A$ and $|A|< \lambda$.  We wish to show that $p$ is realized in $\fMM$.

\

Given an ordinal $\beta \in \lambda$, define $$X_\beta := ( x_i)_{ i  \in \beta} \textrm{ and }Y_\beta := (x_i)_{ i \in \lambda \setminus \beta}.$$ 
Note that $X_{\beta+1} = {X_\beta}^\smallfrown \la x_\beta\ra$ and ${X_\beta}^\smallfrown Y_\beta = X$ for all $\beta \in \lambda$.

Define
$$\displaystyle q_\beta(X_\beta):= \{ \exists \ov{y} \bigwedge_{\varphi \in F} \varphi(X_\beta; \ov{y}) : F=F(X_\beta;\ov{y}) \subset_\omega p, \; \ov{y} \in Y_{\beta}^n, \;  n\in \omega \}.$$
Using AC we can assume there is a well-ordering $\prec$ on $M$.  
We shall attempt to define a function $G$ as follows:
for any set $Z$, 
if there exists
$\beta\in\lambda$ such that $Z:=( b_\gamma)_{\gamma \in \beta} \in M^\beta$ and $M \vDash q_\beta(Z)$, 
$$G(Z) := \textrm{~the~} \prec\textrm{-least element~} b \in M \textrm{~such that~} M \vDash q_{\beta+1}(Z^\smf \la b \ra);$$

\noindent otherwise, $G(Z) := \emptyset$.  

\vspace{.1in}

We first argue that $G$ is a well-defined function on $V$.

\begin{claim}$G$ is a function on $V$. \end{claim}
\begin{proof}
Clearly the domain of $G$ is $V$.  To see that $G$ is well-defined, we fix $Z$ and show that $G(Z)$ exists and is uniquely determined by $Z$.  Suppose there exists $\beta \in \lambda$ such that
\begin{eqnarray}\label{app1}
 Z:=(b_\gamma)_{\gamma \in \beta} \in M^\beta \textrm{~and~} M \vDash q_\beta(Z)
\end{eqnarray}
\noindent (otherwise, clearly $G(Z)$ is uniquely determined to be $\emptyset$).
Consider the set of formulas defined as follows:

\vspace{.1in}

\noindent $\displaystyle \pi(x_\beta) = \{\exists \ov{y} \bigwedge_{\varphi \in F} \varphi(X_\beta/Z) :  F=F(X_{\beta+1}; \ov{y})  \subset_\omega p, \; \ov{y} \in Y_{\beta+1}^n, \; n\in\omega \}.$

\vspace{.1in}

\noindent It is clear that $|Z \cup A| < \lambda$.

First, we argue that $\pi$ is a 1-type with respect to $\fMM$ over the set of parameters $Z \cup A$.  Fix any finite subset $H \subseteq \pi$ and define $m:=|H|$.  Then there exist finite sets of $\LL_A$-formulas
$F_t = \{\varphi^t_s(x_{\beta};\ov{y}_t;X_\beta) : s \in |F_t|\} \subset p$,
for some $\ov{y}_t \in Y_{\beta+1}^{n_t}$,
for all $t\in m$, such that:
$$H(x_\beta)=\{ \exists \ov{y}_t \; \bigwedge_{s \in |F_t|} \varphi^t_s(x_{\beta};\ov{y}_t;X_\beta/Z) : t\in m \}.$$

\noindent Let $\ov{y}$ be a tuple such that $\ran(\ov{y})=\bigcup_{t \in m} \ran(\ov{y}_t)$.  We may assume:

$$H(x_\beta)=\{ \exists \ov{y} \; \bigwedge_{s \in |F_t|} \varphi^t_s(x_{\beta};\ov{y};X_\beta/Z) : t\in m \}$$

\noindent For convenience, we may define $\psi_t :=\exists \ov{y} \; \bigwedge_{s \in |F_t|} \varphi^t_s(x_{\beta};\ov{y};X_\beta/Z)$ so that
$$H(x_\beta)=\{ \psi^t : t \in m\}$$

Define the formula 
$$\theta(X_{\beta}) := \exists x_\beta \exists \ov{y} \bigwedge_{t\in m} \bigwedge_{s \in  |F_t|} \varphi^t_s(x_{\beta};\ov{y};X_\beta).$$  
This  formula is in $q_\beta$ and so $\fMM \vDash \theta(X_\beta/Z)$ by \eqref{app1}.  Thus, there is some instantiation $b_\beta$ of $x_\beta$ such that
$$\fMM \vDash \exists \ov{y} \bigwedge_{t\in m} \bigwedge_{s \in |F_t|} \varphi^t_s(x_{\beta}/b_\beta;\ov{y};X_\beta/Z).$$

\noindent A fortiori, 
$$\fMM \vDash \bigwedge_{t\in m} \exists \ov{y}  \bigwedge_{s \in |F_t|} \varphi^t_s(x_{\beta}/b_\beta;\ov{y};X_\beta/Z),$$
i.e.,
$$\fMM \vDash \bigwedge_{t \in m} \psi^t(x_\beta/b_\beta),$$
so
$$\fMM \vDash \exists x_\beta \bigwedge_{\psi \in H} \psi(x_\beta).$$
Thus we have shown that $\pi$ is finitely satisfiable in $\fMM$.

\

Since we have established that $\pi$ is a type with respect to $\fMM$ over a parameter set of size less than $\lambda$, $\pi$ must be realized in $\fMM$, by $\lambda$-saturation.  Let $b \in M$ be the $\prec$-least element of $M$ that realizes $\pi$ in $\fMM$.  We will show a little more than is required by showing that $G(Z)$ is uniquely defined to be $b$.  

To see that $\fMM \vDash q_{\beta+1}(Z^\smf \la b \ra)$, consider any formula in $q_{\beta+1}(X_{\beta+1})$ such as
$$\exists \ov{y} \bigwedge_{\varphi \in F} \varphi(X_{\beta+1}; \ov{y})$$
where $F=F(X_{\beta+1},\ov{y}) \subset_\omega p$ and $\ov{y} \in Y_{\beta+1}^n$, for some $n\in\omega$.  For this same $F$:
$$\fMM \vDash \exists \ov{y} \bigwedge_{\varphi \in F} \varphi(X_{\beta+1}/Z^\smf\la b \ra; \ov{y})$$
since $b$ realizes $\pi$ in $\fMM$.
For any other element $b' \in M$ such that $\fMM \vDash q_{\beta+1}(Z^\smf \la b' \ra)$, this $b'$ realizes $\pi$, and so $b \prec b'$.  Thus $b$, as defined previously, is the $\prec$-least in $\fMM$ such that $\fMM \vDash q_{\beta+1}(Z^\smf \la b \ra)$.  This proves the claim that $G$ is a function.
\end{proof}

By Transfinite Recursion, there exists some $F: \textrm{Ord} \rightarrow V$ such that for all $\alpha$, 
\begin{eqnarray}\label{app2}
F(\alpha) = G(F \uphp \alpha).
\end{eqnarray}
Let $b_\alpha := F(\alpha)$, for any $\alpha$.  Equation \eqref{app2} may be rephrased as: $$b_\alpha = G((b_\beta)_{\beta \in \alpha} ).$$  Define $B_\alpha :=  (b_\beta)_{\beta \in \alpha}$, for any $\alpha \leq \lambda$.

\vspace{.1in}

\begin{claim} For each $\beta\leq \lambda$, $\fMM \vDash q_\beta(B_\beta)$.
\end{claim}

\begin{proof}  Let $C = \{ \beta \in \textrm{Ord} : \beta \leq \lambda \rightarrow \fMM \vDash q_\beta(B_\beta) \}$.  We will show that $C = \textrm{Ord}$ by transfinite induction.

Let $B_0 = \emptyset$ and $X_0 = \emptyset$.  Note that $q_0$ is the set of sentences that are existential quantifications of finite conjunctions from $p$.  Since $p$ is finitely satisfiable in $\fMM$, clearly $q_0$ is consistent with the theory of $\fMM$, and so we have shown $0 \in C$.

Suppose that $\beta \in C$ and assume that $\beta \in\lambda$, thus $\fMM \vDash q_\beta(B_\beta)$.  By the definition of $G$ in the transfinite recursion, $\fMM \vDash q_{\beta+1}(B_{\beta+1})$, and so $\beta+1 \in C$. (If $\beta = \lambda$, $\beta+1 \in C$ follows immediately.)

Let $\delta \neq 0$ be a limit ordinal and assume that $\gamma \in C$ for all $\gamma\in\delta$.  Assume that $\delta \leq \lambda$, otherwise the conclusion holds immediately.
To see that $\fMM \vDash q_\delta(X_\delta/B_\delta)$, fix any formula $\theta \in q_\delta$.  A formula is a finite string, so $\theta$ contains occurrences of only finitely many variables.  By a cofinality argument, $\theta \in q_\gamma$, for some $\gamma\in\delta$.
By the induction hypothesis, $\fMM \vDash \theta(X_\gamma/B_\gamma)$.
Since $B_\gamma$ is an initial segment of $B_\delta$, we have that $\fMM \vDash \theta(X_\delta/B_\delta)$.  Since $\theta$ was arbitrary, we have shown that $\fMM \vDash q_\delta(X_\delta/B_\delta)$ and so $\delta \in C$.
\end{proof}
Thus, we conclude that $\fMM \vDash q_\lambda(B_\lambda)$.  However, $q_\lambda = p(X)$, and so we have shown that 
$p$ is realized in $\fMM$, as desired.
$\eop_{\ref{technical2+}}$
\end{proof}

\section*{Acknowledgements} The authors thank M.~E.~Malliaris for the references from \cite{C-K} that are used
in the Appendix, as well as S.~Cramer for the idea of using a recursion on $\omega_1$ to exhaust the levels in $2^{<\omega_1}$ in the construction in Subsection \ref{cons_ce}.  We thank the anonymous referee for the detailed comments that improved the presentation of this paper.

\bibliographystyle{plain}

\normalsize
\baselineskip=17pt

\bibliography{bibmasterfive}

\end{document}